\DeclareMathAlphabet{\mathpzc}{OT1}{pzc}{m}{it}
\newcommand{\ad}{\mathrm{ad} \,}
\newcommand{\SO}{\mathrm{SO}}
\newcounter{consta}
\renewcommand{\theconsta}{{\kappa_{\arabic{consta}}}}
\newcounter{constb}[section]
\newcounter{constc}[section]
\newcommand{\consta}{\refstepcounter{consta}\theconsta}
\newcommand{\Sob}{\mathcal{S}}
\newcommand{\Ad}{\mathrm{Ad}}
\newcommand{\Lie}{\mathrm{Lie}}
\newcommand{\vol}{\operatorname{vol}}
\newcommand{\G}{\mathbf{G}}
\newcommand{\height}{\operatorname{ht}}
\newcommand{\SL}{\mathrm{SL}}
\newcommand{\R}{\mathbb{R}}
\newcommand{\Z}{\mathbb{Z}}
\newcommand{\Q}{\mathbb{Q}}
\newcommand{\adele}{\mathbb{A}}
\DeclareMathAlphabet{\mathpzc}{OT1}{pzc}{m}{it}
\DeclareFontFamily{OT1}{rsfs}{}
\DeclareFontShape{OT1}{rsfs}{n}{it}{<-> rsfs10}{}
\DeclareMathAlphabet{\mathscr}{OT1}{rsfs}{n}{it}
\numberwithin{equation}{section}
\newtheorem{thm}[subsection]{Theorem}
\newtheorem*{cor*}{Corollary}
\newtheorem{lemma}[subsection]{Lemma}
\newtheorem*{lem}{Lemma}
\newtheorem{propo}[subsection]{Proposition}
\newtheorem{thm*}{Theorem}[]
\newtheorem*{prop}{Proposition}
\newtheorem*{claim}{Claim}
\newcommand{\bbr}{\mathbb{R}}
\newcommand{\bbq}{\mathbb{Q}}
\newcommand{\bbz}{\mathbb{Z}}
\newcommand{\Ff}{\mathbb F}
\newcommand{\Bcal}{{\mathcal B}}
\newcommand{\Gfrak}{\mathfrak{G}}
\newcommand{\hfrak}{\mathfrak{h}}
\newcommand{\rfrak}{\mathfrak{r}}
\newcommand{\gfrak}{\mathfrak{g}}
\newcommand{\places}{\Sigma}
\newcommand{\red}{{\rm red}}
\newcommand{\be}{\begin{equation}}
\newcommand{\ee}{\end{equation}}
\newcommand{\temp}{{M}}
\newcommand{\disc}{\operatorname{ht}}
\newcommand{\slnr}{\SL_N(\adele)}
\newcommand{\rad}{{\rm R}}
\newcommand{\gsl}{\mathfrak{sl}}
\newcommand{\injr}{\eta}
\newcommand{\cfun}{\mathsf{c}}
\newcommand{\cfs}{\cfun_S}
\newcommand{\cfb}{\cfun_{\mathcal B}}
\newcommand{\lcagr}{{\bf L}}
\newcommand{\lcgr}{L}
\newcommand{\tlg}{{\tilde{\lcagr}}}
\newcommand{\tlc}{{\tilde{\lcgr}}}
\newcommand{\hl}{\height_\lcgr}
\newcommand \A {\mathbb{A}}%
\newcommand \bG {\mathbf{G}}%
\newcommand \bH {\mathbf{H}}%
\newcommand \bL {\mathbf{L}}%
\newcommand \bR {\mathbf{R}}%
\newcommand \fg {\mathfrak{g}}%
\newcommand \fl {\mathfrak{l}}%
\newcommand \fh {\mathfrak{h}}%
\newcommand \fo {\mathfrak{o}}%
\newcommand \fr {\mathfrak{r}}%
\newcommand \rH {\mathrm{H}}%
\newcommand \rN {\operatorname{N}}%
\newcommand \fsl {\operatorname{\mathfrak{sl}}}%
\begin{document}

\title[Diameter of homogeneous spaces]{Diameter of homogeneous spaces: an effective account}

\author{A. Mohammadi}
\address{A.M.:  Department of Mathematics, The University of California, San Diego, CA 92093, USA}
\email{ammohammadi@ucsd.edu}
\thanks{A.M.~acknowledges support from the NSF and Alfred P.\ Sloan Research Fellowship.}

\author{A. Salehi Golsefidy}
\address{A.S-G: Department of Mathematics, The University of California, San Diego, CA 92093, USA}
\email{golsefidy@ucsd.edu}
\thanks{A.S-G.~acknowledges support from the NSF and Alfred P. Sloan Research Fellowship.}

\author{F. Thilmany}
\address{F.T.: Department of Mathematics, The University of California, San Diego, CA 92093, USA}
\email{fthilman@ucsd.edu}
\thanks{F.T.~acknowledges support from the Fonds National de la Recherche, Luxembourg.}

\begin{abstract}
 
In this paper we prove explicit estimates for the size of small lifts of points in homogeneous spaces.
Our estimates are polynomially effective in the {\em volume} of the space and the injectivity radius. 
%moreover, we do not assume any a priori splitting conditions.   
\end{abstract}

\maketitle

\section{Introduction} \label{sec:intro}
Let $G$ be a semisimple Lie group and let $\Gamma\subset G$ be an arithmetic lattice, e.g.~$G=\SL_d(\R)$ and $\Gamma=\SL_d(\Z)$.  
Reduction theory provides a description of a (weak) fundamental domain for $\Gamma$ in $G$.
Among other things, it relates the injectivity radius at a point $x\in G/\Gamma$ to the size of a {\em small} lift for $x$ in $G$. 
In general, however, these estimates are only up to a compact subset of $G$; in particular, when $\Gamma$ is a uniform (cocompact) lattice in $G$ one does not obtain explicit estimates on the diameter of $G/\Gamma$.

In this paper we provide an explicit estimate for the size of a {\em small} lift in $G$ of a point $x\in G/\Gamma$; 
our estimates are polynomial in the injectivity radius at $x$  
and in a certain measure of the arithmetic complexity of $\Gamma$ which is closely related to the volume of $G/\Gamma$, see Theorem~\ref{thm:adelic-red}. 

It is plausible that some of the arguments involved in reduction theory can be effectivized; this paper however takes an alternative route. The proofs here rely on a uniform spectral gap for arithmetic quotients in the case of semisimple group; see e.g.~\cite{Grom, BurSc, LiMarg} for a similar approach. We then prove and utilize an effective Levi decomposition, in \S\ref{sec:small-levi} and \S\ref{sec:eff-levi}, to allow for groups which may not be semisimple. 

It is worth mentioning that when $\Gamma$ is a cocompact lattice, the dependence of our estimates 
on the injectivity radius may be omitted, see \S\ref{sec:Mahler-Meas}. 
The reader may compare this to the analysis in~\cite{BurSc}, where similar estimates for the isometry groups of rank one symmetric spaces are proved. 
However, our multiplication constants are allowed to depend on the number $N$ which is defined in \S\ref{sec:data} --- this number can be thought of as a notion of dimension for the arithmetic datum that defines $\Gamma$.

The main results are first formulated and proved (in \S\ref{sec:proof}) in the adelic language. Then we deduce the results for the $S$-arithmetic case --- in particular for the case of semisimple Lie groups --- from the adelic setting. In addition to providing a uniform treatment, the adelic language has the advantage that we may bring to bear the seminal works of Prasad~\cite{Pr} and Borel and Prasad~\cite{BorPr}, \`{a} la~\cite{EMMV}, to avoid assuming any {\em splitting} conditions in Theorem~\ref{thm:adelic-red}. In \S\ref{sec:local}, we discuss some corollaries of this theorem in the $S$-arithmetic setting; see namely Theorem~\ref{thm:local-eff-diam-intro} and the discussion following it.

\subsection{The notion of an algebraic datum}\label{sec:data}\label{sec:homog-meas}\label{sec:statement}
In the following,~$\adele$ denotes
the ring of adeles over~$\bbq$. We let $\places=\{\infty\}\cup\{p: p \textrm{ is a prime}\}$ denote the set of places of $\bbq$, and let
$\places_f$ be the set of finite places. We sometime write $\places_\infty$ for the set containing the infinite place.
We will denote places in $\places$ by $v, w, ...$ and places in $\places_f$ by $p, q, ...$.
In this notation, we often write $\bbq_v$ to denote $\bbr$ or $\bbq_p$. 

Throughout, we assume fixed the following datum $(\G, \iota)$:  
\begin{enumerate}
\item A connected algebraic $\bbq$-group $\G$ whose solvable radical is unipotent, i.e.~$\rad(\G)=\rad_u(\G)$.
\item We will always assume $\G$ to be simply connected.
\item An algebraic homomorphism $\iota: \G \rightarrow \SL_N$ 
defined over $\bbq$, with a central kernel.
\end{enumerate}
Condition~(1) is equivalent to $\operatorname{Hom}(\G,\G_m)=\{1\}$. In particular, we get that
$\operatorname{Hom}_\bbq(\G,\G_m)=\{1\}$, hence $\G(\adele)/\G(\bbq)$ has a $\G(\adele)$-invariant finite measure.

Set $X=\SL_N(\adele)/\SL_N(\bbq)$; we let $\vol_X$ denote the $\slnr$-invariant probability measure on $X$. 
Let $G=\iota(\G(\adele))$ and $Y =\iota(\G(\adele)/\G(F)) \subset X$. Let $\mu_Y$ (or simply $\mu$ when there is no confusion) 
be a $G$-invariant probability measure on $Y$.
Let $m$ be a Haar measure on $G$ which projects to $\mu$ under the orbit map.

\subsection{A height function on $X$}\label{sec:alpha1}

For any $v \in \places$, we will abusively let $\|\;\|_v$ denote the maximum norm (with respect to the standard basis) both on $\Q_v^N$ and on $\fsl_N(\Q_v)$.
For any $w \in \A^N$, we set
\[
\cfun(w) :=\prod_{v \in \places} \|w_v \|_v.
\] 
Thanks to the product formula, we have $\cfun(rw)=\cfun(w)$ for all $r\in\bbq$, $w\in \A^N$. 
Moreover, for all $w \in \Q^N - \{0\}$, $\cfun(w)$ is an integer and $\cfun(w) \geq 1$.

We define the \emph{height} function $\height: \SL_N(\A) \to\bbr^+$ by
\be \label{eq:def-ht-g}
\height(g):=\max\{\cfun(gw)^{-1}: 0\neq w \in \Q^N\}.
\ee
This function is $\SL_N(\Q)$-invariant, hence induces a function on $X$ which we continue to denote by $\height$. That is:
for any $x\in X$ we put $\height(x)=\height(g)$ where $g\in \SL_N(\A)$ is so that $x=g\SL_N(\Q)$. 

For every $p\in\places_f$ we let $\|\;\|_{{\rm op},v}$ (or simply $\|\;\|_{\rm op}$ when there is no confusion) denote the operator norm on $\SL_N(\bbq_v)$, induced using the norm $\| \; \|_v$ on $\bbq_v^N$. For any $g\in\SL_N(\bbq_v)$ define
\[
|g| :=\max\{\|g\|_{\rm op},\|g^{-1}\|_{\rm op}\}. 
\]

\subsection{Complexity of homogeneous sets}\label{sec:volume}
An intrinsic notion of {\em volume of the datum} $(\G,\iota)$ was defined and utilized in~\cite{EMMV}; we recall the definition here.

Fix an open subset $\Omega \subset \slnr$ that contains the identity and has compact closure (see \S \ref{sec:X-eta} for our choice for $\Omega$).
Set 
\begin{equation} \label{eq:volume}
\vol(Y) := m (G \cap \Omega)^{-1}.
\end{equation}

Evidently this notion depends on $\Omega$, but the notions arising from two different choices of $\Omega$
are comparable to each other, in the sense that their ratio is bounded above and below.
Consequently, we drop the dependence on $\Omega$ in the notation.  
See~\cite[\S2.3]{EMMV} for a discussion of basic properties of the above definition.

\subsection{Height of rational subspaces}\label{sec:height}

Let ${\bf W}\subset\fsl_N(\Q)$ be a $d$-dimensional subspace,
so $\wedge^{d}{\bf W}$ is a rational line in $\wedge^{d}\fsl_N(\Q)$. 
This line is diagonally embedded in $\wedge^{d}\fsl_N(\A),$ and we do not 
distinguish between this diagonal embedding and the line. 

We endow $\wedge^{d} \fsl_N(\Q_v)$ with the maximum norm with respect to the basis obtained by collecting the $d$-fold wedges of (distinct, ordered) elements of the canonical basis of $\fsl_N(\Z)$. In this section, we will again use $\| \; \|_v$ to denote this norm. 

Let ${\bf v}_{\bf W}$ denote a primitive integral vector on $\wedge^{d}{\bf W}$ --- 
this vector is obtained by fixing a $\Z$-basis for ${\bf W}\cap\fsl_N(\Z)$. 
Define
\be\label{eq:def-ht}
\height({\bf W}):=\|{\bf v}_{\bf W}\|_\infty.
\ee
This is independent of the choice of the basis;
moreover, because we used the max norm in the above definition, $\height({\bf W})$
is an integer.
Alternatively, $\height({\bf W})$ may be defined as follows. Let $\{{e}_1,\ldots,{e}_{d}\}$
be a $\bbq$-basis for ${\bf W}.$ Then
\[
\height({\bf W})=\prod_{v}\|{e}_1\wedge\cdots\wedge{e}_{d}\|_{v}
\]
where the product is taken over all places of $\bbq.$ In view of the product formula, 
the above is independent of our choice of the rational basis for ${\bf W}.$  

Given a $\bbq$-subgroup $\bH$ of $\SL_N$ we define
\be\label{eq:def-ht-group}
\height(\bH):=\height\bigl(\Lie(\bH)\bigr)=\|{\bf v}_\bH\|_\infty,
\ee
where ${\bf v}_{\bH}$ is a primitive integral vector as above. If $\bH$ is a $\bbq$-subgroup of $\bG$ instead, we set $\height(\bH) =\height(\iota(\bH))$. 

The volume of an adelic orbit defined in \S \ref{sec:volume} is closely related to the height function. 
This relationship is easy to describe for unipotent groups and was studied in~\cite[App.~B]{EMMV}, under the assumption that $\G$
is semisimple.

We now define the height of $Y$ to be
\be \label{eq:def-discY}
\disc(Y):=\max\{\height(\G), \vol(Y)\}.
\ee

The following theorem is the main result of this paper.

\begin{thm}\label{thm:adelic-red}
There exists some $\consta\label{k:main-exp-arch-norm}\label{k:main-exp-inj-r}\label{k:main-exp-vol}>0$ depending only on $N$, and for any datum $(\G,\iota)$ as in \S\ref{sec:data} there exists some  
$p\in\places_f$ with 
\[
p\ll\bigl(\log\disc(Y)\bigr)^2,
\] 
so that the following holds. For each $g \in\G(\A)$, there exists some $\gamma\in\G(\Q)$ such that $\iota(g\gamma)_{q} \in \SL_N(\Z_q)$ for all primes $q \neq p$,
\[
\text{$|\iota(g\gamma)_{\infty}| \ll \bigl(\log\disc(Y)\bigr)^{\ref{k:main-exp-arch-norm}}\;\;$ and $\;\;|\iota(g\gamma)_p| \ll {\height(\iota(g))}^{\ref{k:main-exp-inj-r}}\disc(Y)^{\ref{k:main-exp-vol}}$.}
\]
Moreover, the implicit multiplicative constants depend only on $N$. 
\end{thm}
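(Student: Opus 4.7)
The plan is to combine an effective Levi decomposition with a spectral-gap-driven flow on $Y$ at a carefully chosen finite place $p$. First, by the effective Levi decomposition developed in \S\ref{sec:small-levi} and \S\ref{sec:eff-levi}, one writes $\G = \bL \ltimes \rad_u(\G)$ with controlled complexity. Since the radical is unipotent, small lifts in $\rad_u(\G)$ follow from the polynomial coordinates on the nilpotent group and its integral structure. Consequently, the problem reduces to the case where $\G$ is semisimple and simply connected, which is precisely the setting in which the uniform spectral gap alluded to in the introduction applies.

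Next I would choose the exceptional prime $p$. Using the results of Prasad and Borel--Prasad \cite{Pr, BorPr}, in the style of \cite{EMMV}, the set of primes at which $\G$ fails to be quasi-split, or where the local integral model has abnormal volume, is controlled by divisors of an integer whose size is polynomially bounded by $\disc(Y)$. A pigeonhole on primes then yields some $p \ll (\log\disc(Y))^2$ outside this bad set, at which $\G(\bbq_p)$ carries a natural compact open subgroup $K_p$ of controlled index and a bounded element $u_p$ whose associated Hecke operator $\rT_p$ acts on $L^2(Y)$ with a spectral gap $\lambda < 1$ that is uniform across the datum $(\G,\iota)$.

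The heart of the argument is an iterated Hecke flow by $\rT_p$. Given $g \in \G(\A)$, I would analyze the convolution powers $\rT_p^k \delta_{g\G(\bbq)}$. The uniform spectral gap implies that, tested against any fixed function detecting the non-compact part of $Y$, the mass of this iterate decays like $C \cdot \lambda^k$ with $C$ polynomial in $\disc(Y)$ and $\height(\iota(g))$. Taking $k_0 \asymp \log\disc(Y) + \log\height(\iota(g))$ and $R = (\log\disc(Y))^{\ref{k:main-exp-arch-norm}}$, some $k \le k_0$ and some representative $h_p \in u_p^k K_p$ of the $k$-th iterate will satisfy $h_p g \gamma \in \{x \in Y : \height(x) \le R\}$ for a suitable $\gamma \in \G(\bbq)$. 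Writing $g' = h_p g \gamma$ and $g\gamma = h_p^{-1} g'$, one reads off the required conclusions: at infinity, $(g\gamma)_\infty = g'_\infty$ is bounded by $R = (\log\disc(Y))^{\ref{k:main-exp-arch-norm}}$; at primes $q \ne p$, the element $h_p$ and the choice of $\gamma$ preserve the local integral structure, so $(g\gamma)_q \in \SL_N(\bbz_q)$; and at $p$, $(g\gamma)_p = h_p^{-1} g'_p$ has norm at most $|h_p|_p \cdot |g'_p|_p$, in which $|h_p|_p$ is polynomial in $\disc(Y)$ (giving the factor $\disc(Y)^{\ref{k:main-exp-vol}}$) while the remaining $p$-adic size contributes the factor $\height(\iota(g))^{\ref{k:main-exp-inj-r}}$.

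The main obstacle is the polylogarithmic bound at infinity. This forces one to use an exponential --- not merely polynomial --- contraction from the spectral gap, applied across $\log\disc(Y)$ iterations; a weaker effective equidistribution would yield at best a polynomial dependence at infinity. A secondary difficulty is that the Hecke flow at $p$ has to be reconciled with the effective Levi decomposition, so that reducing from a general $\G$ to its Levi subgroup introduces losses that remain polynomial in $\disc(Y)$; this is the purpose of the dedicated treatment in \S\ref{sec:small-levi}--\S\ref{sec:eff-levi}.
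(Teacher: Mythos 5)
Your overall strategy matches the paper's: reduce to the semisimple case via the effective Levi decomposition of \S\ref{sec:small-levi}--\S\ref{sec:eff-levi}, handle the unipotent radical by strong approximation and its integral polynomial structure (Lemma~\ref{lem:unip-case}), choose the prime $p\ll(\log\disc(Y))^2$ via Prasad and Borel--Prasad as in \cite{EMMV} (Proposition~\ref{prop:good-place}), and use the uniform spectral gap at $p$ to move $[g]$ into a controlled region by an element supported at the single place $p$ of polynomially bounded size. Your replacement of the paper's single large-time matrix-coefficient estimate for $u(t)$, $|t|_p\asymp\vol(Y)^\star\eta^{-\star}$, by an iterated Hecke operator $\rT_p^k$ with $p^k$ of the same order is an equivalent use of property $(\tau)$ and is not a substantive deviation.

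There is, however, a genuine gap in your treatment of the archimedean bound, and your diagnosis of where the polylogarithm comes from is not correct. You aim the Hecke flow at the set $\{x:\height(x)\le R\}$ with $R=(\log\disc(Y))^{\ref{k:main-exp-arch-norm}}$ and read off $|(g\gamma)_\infty|\ll R$. Two things are missing. First, for the spectral argument to produce a point of the orbit in the target set, you need a lower bound on the $\mu_Y$-measure of that set, and $\mu_Y(\{\height\le R\})$ is not controlled from below by anything you have established; the paper avoids this by targeting the $\eta$-neighborhood $\Omega_{\G,\eta}\cdot[e]$ of the identity coset, whose measure is $\gg\eta^\star\vol(Y)^{-1}$ \emph{by the definition of} $\vol(Y)$, and whose points have lifts that are $O(1)$ at infinity and integral at every finite place. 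Second, even granting that $h_p g\gamma$ has height $\le R$, passing from bounded height to a lift with $|g'_\infty|\ll R^\star$ and $g'_q\in\SL_N(\Z_q)$ for all $q$ is an effective reduction-theory step for $\SL_N$ that must be carried out (it is essentially the computation in Lemma~\ref{lem:ht-injr}, but you do not invoke it). Once the target is the identity neighborhood, the semisimple part contributes only $O(1)$ at infinity; the polylogarithmic factor $(\log\disc(Y))^{\ref{k:main-exp-arch-norm}}$ in the theorem enters solely through the unipotent radical, whose small lift relative to $S=\{\infty,p\}$ costs $p^\star\ll(\log\disc(Y))^\star$ at the archimedean place (Lemma~\ref{lem:unip-case} with $v=p$). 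It has nothing to do with ``exponential versus polynomial contraction across $\log\disc(Y)$ iterations''; indeed the number of Hecke iterations needed is governed by $\log(\height(\iota(g))\disc(Y))$, not by the size of the archimedean component, and the exponential decay only serves to keep the $p$-adic size $p^k$ polynomial in $\disc(Y)$ and $\height(\iota(g))$.
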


The existence of such a prime $p$ relies on Prasad's volume formula~\cite{Pr}, see \S\ref{sec:semisimple}
for more details. 

\subsection{The $S$-arithmetic setting}\label{sec:S-arith-intro} 
Let $S\subset\places$ be a finite subset which contains the infinite place. We will write $\Q_S$ for $\prod_{v\in S}\Q_v$, and $\Z_S$ will denote the ring of $S$-integers. 

Define $\height_S: \SL_N(\Q_S) \to\bbr^+$ by
\[
\height_S(g):=\max \left\{(\textstyle\prod_{v\in S}\|gw\|_v )^{-1}: 0\neq w \in \Z_S^N \right\}.
\]

For any $S$ as above, define $\Delta_S$ (or simply $\Delta$ if there is no confusion) by
\[
\Delta_S :=\text{the projection of $\G(\Q)\cap\Bigl(\G(\Q_S) \times \textstyle\prod_{q\not\in S}\iota^{-1}(\SL_N(\Z_q)\Bigr)$ to $\G(\Q_S)$};
\] 
note that $\Delta_S$ is a lattice in $\G(\Q_S)$.

Let $\Omega_S\subset\SL_N(\Q_S)$ be an open set which contains the identity and has compact closure. Put $\hat Y=\iota(\G(\Q_S)/\Delta)$ and define
\[
\vol(\hat Y) :=m_S\bigl(\iota(\G(\Q_S))\cap\Omega_S\bigr)^{-1},
\] 
where $m_S$ is a Haar measure on $\iota(\G(\Q_S))$
normalized so that $m_S(\hat Y)=1$.

\begin{thm}\label{thm:local-eff-diam-intro} 
Let $(\bG, \iota)$ be as in \S\ref{sec:data}. 
Let $S$ be a finite set of places of $\Q$ which contains the infinite place. 
For every $v\in S$, let $\G_v$ be a semisimple algebraic $\Q_v$-group.
Assume
\begin{enumerate}
\item $\G_v$ and $\G$ are isomorphic over $\Q_v$; in particular, $\G$ is semisimple and $\G_v$ is simply connected.   
\item The group $\G(\Q_S)=\prod_{v\in S}\G(\Q_v)=\prod_{v\in S}\G_v(\Q_v)$ is not compact.
\end{enumerate}

There exists a constant $\consta\label{k:local-eff-diam-intro-1}\label{k:local-eff-diam-intro-2}>0$ depending only on $N$ 
and a constant $C\geq1$ which depends on $\G(\Q_S)$ and $N$, but not on $\G$, so that the following holds.  
For every $g\in\G(\Q_S)$ there exists some $\delta\in\Delta$ such that
\[
|\iota(g\delta)|\leq C\,{\height_S(\iota(g))}^{\ref{k:local-eff-diam-intro-1}}\vol(\hat Y)^{\ref{k:local-eff-diam-intro-2}}.
\]
\end{thm}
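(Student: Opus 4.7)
The plan is to deduce this $S$-arithmetic statement from the adelic Theorem~\ref{thm:adelic-red} via a lifting argument. Given $g \in \G(\Q_S)$, I would first embed it as $\hat g \in \G(\A)$ by setting $\hat g_v = g_v$ for $v \in S$ and $\hat g_v = e$ for $v \notin S$. A direct computation, using the product formula and the fact that primitive integer vectors have $\|w\|_v = 1$ at every finite place, shows $\height(\iota(\hat g)) = \height_S(\iota(g))$. A Haar-measure comparison (whose ratio is the volume of a fixed compact open subgroup of $\prod_{v \notin S}\iota(\G(\Q_v))$) yields $\disc(Y) \asymp \vol(\hat Y)$ up to a constant depending only on $\G(\Q_S)$ and $N$. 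These translate adelic bounds into $S$-arithmetic ones.

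Next, I would apply Theorem~\ref{thm:adelic-red} to $\hat g$, producing a prime $p \ll (\log\disc(Y))^2$ and an element $\gamma \in \G(\Q)$ satisfying the integrality and polynomial-bound conclusions. If $p \in S$, the result is essentially immediate: every finite prime $q \notin S$ is distinct from $p$, and since $\hat g_q = e$, the condition $\iota(\hat g\gamma)_q \in \SL_N(\Z_q)$ reduces to $\iota(\gamma)_q \in \SL_N(\Z_q)$, placing $\gamma$ in $\Delta$. The adelic bounds at $v \in S$ then directly give the required inequality with $\delta = \gamma$.

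The main obstacle is the case $p \notin S$, where $\gamma$ lies in the larger arithmetic lattice $\G(\Z_{S\cup\{p\}})$ but not in $\Delta$; here $|\iota(\gamma)_p|$ is polynomially bounded in $\height_S(\iota(g))$ and $\vol(\hat Y)$. To produce a $\Delta$-element, I would invoke strong approximation (valid since $\G$ is simply connected with $\G(\Q_S)$ non-compact) to find $\tau \in \G(\Q)$ that is integral at every prime outside $S \cup \{p\}$ and satisfies $\iota(\tau)_p \in \iota(\gamma)_p^{-1}\SL_N(\Z_p)$; by construction $\delta := \gamma\tau$ then lies in $\Delta$. The hard part is to select $\tau$ among its $\Delta$-coset with $|\iota(\tau)_v|$ polynomially bounded at each $v \in S$. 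I would accomplish this via the Cartan decomposition of $\G(\Q_p)$ relative to $\iota^{-1}(\SL_N(\Z_p))$: the double coset containing $\iota(\gamma)_p^{-1}$ is labelled by a dominant cocharacter of size $O(|\iota(\gamma)_p|)$, and this explicit representative can be lifted to a rational $\tau$ of polynomial $S$-size. Combined with the factorization $|\iota(g\delta)_v| \leq |\iota(\hat g\gamma)_v| \cdot |\iota(\tau)_v|$, this yields the desired bound at every $v \in S$. Since $p \ll (\log\disc(Y))^2$ is absorbed into $\vol(\hat Y)^{\varepsilon}$, the final exponents stay universal in $N$ and the constant depends only on $\G(\Q_S)$ and $N$.
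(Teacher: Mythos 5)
Your reduction to the adelic theorem breaks down precisely at the step you identify as ``the hard part,'' and the proposed fix does not work. When the auxiliary prime $p$ produced by Theorem~\ref{thm:adelic-red} is not in $S$, you need $\tau\in\G(\Q)$ lying in the prescribed left coset $\gamma_p^{-1}\iota^{-1}(\SL_N(\Z_p))$ at $p$, integral at all $q\notin S\cup\{p\}$, \emph{and} of polynomially bounded size at every $v\in S$. Ordinary strong approximation gives existence of such a $\tau$ but carries no size information at the places of $S$; producing a representative of controlled size in its $\Delta$-coset is exactly the effective small-lift problem the theorem is asserting, so the argument is circular as stated. The Cartan-decomposition device does not rescue it: the dominant cocharacter representative $\lambda(p)$ only pins down the \emph{double} coset $K_p\gamma_p^{-1}K_p$, not the left coset $\gamma_p^{-1}K_p$ that integrality of $\iota(g\delta)_q$ at $q=p$ requires, and $\lambda(p)$ is in general not a rational point of $\G$ at all (the maximal $\Q_p$-split torus need not descend to $\Q$), so there is no canonical ``lift to a rational $\tau$ of polynomial $S$-size.'' The paper avoids the problem entirely: in Theorem~\ref{thm:eff-diam-local-semisimple} it does not import the prime $p$ from Proposition~\ref{prop:good-place}, but instead reruns the spectral-gap argument of Theorem~\ref{sec:proof-semisimple} with the one-parameter unipotent $u(t)=\theta_w\begin{smallpmatrix}1&t\\0&1\end{smallpmatrix}$ placed at a place $w\in S$ where $\G(\Q_w)$ is non-compact --- this is where hypothesis~(2) enters, and it is exactly why the constant $C$ is allowed to depend on $\G(\Q_S)$ (through $\theta_w$). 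The resulting $h\in\tlg(\Q_w)$ then lives at a place of $S$, so the $\gamma$ one extracts is automatically integral at every $q\notin S$, i.e.\ lies in $\Delta$, with no patching needed.

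Two smaller points. First, your claim that $\disc(Y)\asymp\vol(\hat Y)$ ``up to a constant depending only on $\G(\Q_S)$ and $N$,'' with the ratio being the volume of a fixed compact open subgroup, is an oversimplification: the product $\prod_{q\notin S}\lambda_q(K_q)$ depends on $\G$, and the correct statement (Proposition~\ref{prop:vol-vol}) is only a polynomial comparison, whose proof needs the bound $[\Gamma:\Delta]\leq\vol(Z)^{\ref{k:normal-ind}}$ coming from Borel--Prasad via~\cite[\S5.12]{EMMV}. A polynomial comparison suffices for the theorem, but it is not a soft Haar-measure computation. Second, your treatment of the case $p\in S$ and the identity $\height(\tilde g)=\height_S(g)$ (Lemma~\ref{lem:prod-formula}) are both fine and agree with the paper.
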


This theorem will be proved in~\S\ref{sec:local}; see in particular Theorem~\ref{thm:eff-diam-local-semisimple} where Theorem~\ref{thm:local-eff-diam-intro} is restated and proved. We will also discuss some other corollaries of Theorem~\ref{thm:adelic-red} in~\S\ref{sec:local}.

Let us highlight two features of the above theorem.
First, note that once $N$ is fixed the dependence on the lattice $\Delta$ in the estimates is only through its \emph{covolume} $\vol(\hat Y)$.
Second, the above estimates use $\vol(\hat Y)$ instead of $\vol(Y)$; the fact that $\vol(\hat Y)$ and $\vol(Y)$ are polynomially related to each other is a consequence of deep results by Prasad and Borel and Prasad~\cite{Pr,BorPr}, see~\S\ref{prop:vol-vol}.

%%%%%%%
%%%%%%%

\section{Notation and preliminaries}

\subsection{Notation}\label{sec:notation}

Throughout the paper, $\places$, $\adele$, etc.\ will be as in \S\ref{sec:statement}. 
In particular, $\adele=\prod_{v\in\places}'\bbq_v$ where~$\prod'$ denotes
the restricted direct product with respect to $\bbz_p$ for~$p\in \places_f$.
Given an element $g$ in $\SL_N(\A)$ (or in $\fsl_N(\A)$, $\A^N$, etc.), we write $g_v$ for the $v$-th component of $g$.

If $S \subset \Sigma$ is a finite set of places containing the infinite place, $\Z_S$ will denote the ring of $S$-integers, that is $\Z_S = \{r \in \Q \mid |r|_v \leq 1 \text{ for } v \notin S \}$. On the other hand, $\Q_S$ will denote the product $\prod_{v \in S} \Q_v$. There are canonical inclusions $\Q \subset \A$, $\Q \subset \Q_S$, $\Q_S \subset \A$, etc.~which will often be omitted from the notation. 

For any finite place~$p\in\places_f$, $\Ff_p=\bbz_p/p\bbz_p$ is the finite field of order $p$. Let $|x|_p$ denote the absolute value on $\bbq_p$ normalized so that $|p|_p=1/p$. 
Finally, let $\widehat{\bbq_p}$ denote the maximal unramified extension of $\bbq_p$, $\widehat{\Z_p}$ denote the ring of integers in $\widehat{\bbq_p}$, and $\widehat{\Ff_p}$ denote the residue field of $\widehat{\bbz_p}$. Note that $\widehat{\Ff_p}$ is the algebraic closure of $\Ff_p.$ 

Recall that, for any place $v \in \places$, $\|\;\|_v$ denotes the maximum norm both on $\Q_v^N$ and $\gsl_N(\bbq_v)$ with respect to their standard bases. When there is no ambiguity, we may drop the subscript $v$. For this norm, we denote $B_{\gsl_N(\bbq_v)}(r)$ the ball in $\gsl_N(\bbq_v)$ of radius $r$ centered at $0$.

Let $(\G,\iota)$ be an algebraic datum, as described in~\ref{sec:statement}.
For any $v\in\places$, let $\gfrak_v =\Lie(\G(\bbq_v))$.  
Using the embedding $d \iota: \gfrak_v \to \fsl_N(\Q_v)$, we pull back the norm $\| \; \|_v$ to a norm on $\gfrak_v$ which we continue to denote by $\| \; \|_v$ (or $\|\;\|_\infty$, $\|\;\|_p$). For these norms, we define $B_{\gfrak_v}(r)$ to be the ball in $\gfrak_v$ of radius $r$ centered at $0$. 

For every $v\in\places$, we let $G_v=\iota(\G(\bbq_v))$; in particular, $G_\infty=\iota(\G(\R))$.

\medskip

In the sequel, the notation $A \ll B$ means: there exists a constant $c>0$ so that $A \leq cB$; the implicit constant $c$ is permitted to depend on
$N$, but (unless otherwise noted) not on anything else. We write~$A\asymp B$ if~$A\ll B\ll A$.
If a constant (implicit or explicit) depends on another parameter or only on $N$, 
we will make this clear by writing e.g.~$\ll_\epsilon$, $\asymp_N$,~$c(G)$, etc.

The exponents $\kappa_\bullet$ are allowed only to depend on $N$. 
We also adopt the $\star$-notation from~\cite{EMV}. 
We write $B = A^{\pm\star}$ if $B=cA^{\pm \kappa},$ 
where $\kappa>0$ and $c$ depend only on $N$, unless it is explicitly mentioned otherwise.
Similarly one defines $B\ll A^\star,$ $B\gg A^\star$.
Finally, we also write~$A\asymp B^\star$ if~$A^\star\ll B\ll A^\star$ (possibly with different exponents).

\subsection{Injectivity radius in $X$}\label{sec:X-eta}

Given $\eta>0$, put $\Xi_{\eta} :=\exp(B_{\gsl_N(\R)}(\eta))$.
Throughout, we assume ${\injr}_0$ is small enough so that $\exp:B_{\gsl_N(\R)}({\injr}_0)\to\Xi_{{\injr}_0}$ is a diffeomorphism.
For any ${\injr}>0$, let 
\[
\Omega_{\injr} :=\Xi_{\injr}\times \Bigl(\prod_{\places_f}\SL_N(\Z_p)\Bigr).
\] 
We fix $\Omega = \Omega_{\eta_0}$; this set will be the one used to measure the volume of $(\bG, \iota)$, as described in \S \ref{sec:volume}.

For $x\in X$, define $\pi_x: \SL_N(\A) \to X$ by $\pi_x(g)=gx$; when $x=e$ we simply write $\pi$ for $\pi_x$.
For every $0<{\injr}<{\injr}_0$, define
\be\label{eq:def-X-eta}
X_{\injr} :=\{x\in X: \pi_x\text{ is injective when restricted to $\Omega_\injr$}\}.
\ee
If ${\injr}\geq {\injr}_0$, set $X_{\injr}=\emptyset$.
Let $Y_{\injr}:= Y\cap X_{\injr}$.

\begin{lemma}\label{lem:ht-injr}
There exists some constant $\consta\label{eq:ht-injr} > 0$ so that the following holds. 
\begin{enumerate}
\item For any $g\in\SL_N(\A)$ we have $g\in X_{\ref{eq:ht-injr}\height(g)^{-\ref{eq:ht-injr}}}$.
\item If $g\SL_N(\Q)\in X_\eta$, then $\height(g)\ll \eta^{-\ref{eq:ht-injr}}$.
\end{enumerate} 
\end{lemma}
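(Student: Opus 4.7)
The plan is to translate the adelic statement into a classical problem about the geometry of a single lattice $\Lambda \subset \R^N$, and then invoke Minkowski's theory of successive minima.

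As a preparation, I would use strong approximation for $\SL_N$ to represent each coset by $g = (g_\infty, k_2, k_3, \dots)$ with $k_p \in \SL_N(\Z_p)$ at every finite place. For such a representative, any primitive $w \in \Z^N$ satisfies $\|k_p w\|_p = 1$, so $\cfun(gw) = \|g_\infty w\|_\infty$, and consequently $\height(g) = \lambda_1(\Lambda)^{-1}$, where $\Lambda = g_\infty \Z^N$ and $\lambda_1 \leq \dots \leq \lambda_N$ denote the successive minima of $\Lambda$. Non-injectivity of $\pi_x$ on $\Omega_\eta$ amounts to the existence of $\gamma \in \SL_N(\Q) \setminus \{e\}$ with $g \gamma g^{-1} \in \Omega_\eta^{-1}\Omega_\eta$; using $k_p \in \SL_N(\Z_p)$ the finite-place components force $\gamma \in \SL_N(\Z)$, while at the archimedean place a Baker--Campbell--Hausdorff computation yields $\|g_\infty \gamma g_\infty^{-1} - I\|_{\mathrm{op}} \ll \eta$.

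For part~(1), I would fix a basis $w_1, \dots, w_N$ of $\Z^N$ with $\|g_\infty w_j\|_\infty \ll \lambda_j$ provided by Minkowski's second theorem. If $\gamma \in \SL_N(\Z) \setminus \{I\}$ satisfied $\|g_\infty \gamma g_\infty^{-1} - I\|_{\mathrm{op}} \ll \eta$, then some $(\gamma - I)w_j$ would be nonzero, and the vector $g_\infty(\gamma - I)w_j = (g_\infty \gamma g_\infty^{-1} - I)(g_\infty w_j)$ would lie in $\Lambda \setminus \{0\}$ with norm $\ll \eta\,\lambda_N$. Combined with $\lambda_N \ll \lambda_1^{-(N-1)}$ (which follows from $\prod_i \lambda_i \asymp 1$) and the defining inequality $\lambda_1 \leq \|g_\infty(\gamma-I)w_j\|$, this would give $\lambda_1^N \ll \eta$; contrapositively, $g \in X_\eta$ as soon as $\eta \ll \height(g)^{-N}$.

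For part~(2), I would conversely \emph{construct} a non-trivial $\gamma \in \SL_N(\Z)$ whose conjugate by $g_\infty$ is close to the identity. Take a Siegel- or LLL-reduced basis $v_1, \dots, v_N$ of $\Lambda$ with $\|v_j\| \asymp \lambda_j$ whose dual coordinate functionals additionally satisfy $\|v_j^*\|_{\mathrm{op}} \ll \lambda_j^{-1}$; set $w_j = g_\infty^{-1} v_j \in \Z^N$, and let $\gamma \in \SL_N(\Z)$ be the unipotent defined by $\gamma w_N = w_N + w_1$ and $\gamma w_j = w_j$ for $j < N$. Then $g_\infty \gamma g_\infty^{-1} - I = v_1 \otimes v_N^*$, whose operator norm is $\ll \lambda_1/\lambda_N \ll \lambda_1$ (using $\lambda_N \gg 1$, again from $\prod_i \lambda_i \asymp 1$). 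If $g \SL_N(\Q) \in X_\eta$, this $\gamma$ cannot witness non-injectivity, forcing $\lambda_1 \gg \eta$, i.e.\ $\height(g) \ll \eta^{-1}$.

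The main obstacle is the need, in part~(2), for a basis controlling $\|v_j\|$ and $\|v_j^*\|$ \emph{simultaneously}; Minkowski's second theorem alone furnishes only the former, so a genuine reduction procedure (Hermite--Korkine--Zolotarev, Siegel or LLL) must be invoked. Once such a basis is in hand, everything else is a routine bookkeeping of norms through the product formula and the Minkowski inequality, and absorbing the various exponents $1$ and $N$ into a single $\kappa_1$ depending only on $N$ completes the argument.
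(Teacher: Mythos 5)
Your argument is correct, and it reaches both bounds of the lemma by a route whose technical core differs from the paper's. The paper also begins with strong approximation to place all finite components in $\SL_N(\Z_p)$, but then invokes classical reduction theory at the real place: it writes $g_\infty\hat\gamma_1=kau$ in a Siegel domain, proves $|a|^{1/(N-1)}\ll\height(g)\ll|a|$, and for part~(1) bounds the distortion of the whole ball $B_{\fsl_N(\R)}(\eta)$ under $\Ad(kau)^{-1}$ by $|a|^2$, while for part~(2) it exhibits a root vector whose $\Ad(a)^{-1}$-image must stay small. You instead work directly with the lattice $\Lambda=g_\infty\Z^N$ and its successive minima, using the exact identity $\height(g)=\lambda_1(\Lambda)^{-1}$, Minkowski's second theorem $\prod_i\lambda_i\asymp_N 1$, and explicit rank-one unipotents $I+v_1\otimes v_N^*$ in place of root vectors. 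The two are of course cousins (the diagonal entries $a_i$ of the Iwasawa decomposition are comparable to the $\lambda_i$), but your version bypasses the Siegel domain entirely, gives the clean identity for $\height(g)$ rather than the two-sided polynomial comparison with $|a|$, and yields the slightly different (equally acceptable) exponent $N$ in part~(1) versus the paper's $2(N-1)$. One remark: the ``main obstacle'' you flag in part~(2) --- finding a basis controlling $\|v_j\|$ and $\|v_j^*\|$ simultaneously --- is not actually an obstacle requiring HKZ or LLL. Once you have any basis with $\|v_j\|\ll_N\lambda_j$ (the standard refinement of Minkowski's theorem), Cramer's rule expresses $v_j^*$, up to the determinant $\pm1$, through the wedge of the remaining $v_i$, whence $\|v_j^*\|\ll_N\prod_{i\neq j}\|v_i\|\ll_N\prod_{i\neq j}\lambda_i\asymp_N\lambda_j^{-1}$ automatically; in fact for your construction only $\|v_N^*\|\ll_N 1$ is needed.
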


\begin{proof}

Let $g\in\SL_N(\A)$. First note that by strong approximation for $\SL_N$, there exists some 
$\gamma_0\in\SL_N(\Q)$ so that 
\[
g\gamma_0=(g_\infty', (g_p'))\in \SL_N(\R)\times\bigl(\textstyle\prod_p\SL_N(\Z_p)\bigr).
\] 
Further, using the reduction theory of $\SL_N(\R)$, there exists some $\hat\gamma_1\in\SL_N(\Z)$ so that
$g_\infty\hat\gamma_1=kau$, where $k\in{\SO}_N(\R)$, $a={\rm diag}(a_i)$ is diagonal with positive entries satisfying
$a_ia_{i+1}^{-1}\leq 2/\sqrt3$, and $u=(u_{ij})$ is unipotent upper triangular with $|u_{ij}|\leq 1/2$. 
Note that 
\[
\|aua^{-1}\|_{\mathrm{op}} \leq \frac{1}{2} \left( \frac{(2/\sqrt3)^{N} - 1}{(2/\sqrt3) - 1} + 1 \right) \ll 1
\]
for any $a$ and $u$ as above.

Let $\gamma=\gamma_0\gamma_1$, where $\gamma_1$ denotes the diagonal embedding on $\hat\gamma_1$ in $\SL_N(\Q)\subset\SL_N(\A)$.
Then, since $\gamma_{1,p}\in\SL_N(\Z_p)$ for all $p$, we have 
\be \label{eq:strg-app}
g\gamma=(kau,(\hat g_p))\in \SL_N(\R)\times\bigl(\textstyle\prod_p\SL_N(\Z_p)\bigr).
\ee

For $w \in \Q^N$, we have
\begin{align}
\notag \cfun(g \gamma w)	&=\| (kau) w\|_\infty \cdot \bigl(\textstyle\prod_{p}\| \hat g_p w\|_p\bigr) \\
\notag					&=\| (kaua^{-1}) a w\|_\infty\cdot \bigl(\textstyle\prod_{p}\|w\|_p\bigr)&& \textrm{since } \hat g_p\in\SL_N(\Z_p) \\
\label{eq:ht-injr-1}			&\asymp \| aw\|_\infty\cdot \bigl(\textstyle\prod_{p}\|w\|_p\bigr)&& \hspace{-4em} \textrm{since } k \in \SO_N(\R), \|aua^{-1}\|_\mathrm{op} \ll 1.
\end{align}
Moreover, we have $\| a w \|_\infty \leq (\max_{i} a_i ) \|w\|_\infty = |a| \cdot \|w\|_\infty$, and thus also $|a|^{-1} \|w\|_\infty \leq \| aw \|_\infty$. 
Therefore,~\eqref{eq:ht-injr-1} implies that 
\be \label{eq:ht-injr-2}
|a|^{-1} \cfun(w)^{-1} \ll \cfun((g\gamma) w)^{-1} \ll |a| \cfun(w)^{-1}.
\ee
Now for $w$ an appropriate basis vector, we have
\[
\| aw\|_\infty^{-1} = (\min_{i} a_i)^{-1} = \max_{i} a_i^{-1} = \| a^{-1} \|_{\rm op}, 
\]
and since $\det a = 1$, we have $\| a^{-1} \|_{\rm op} \geq \| a \|_{\rm op}^{1/(N-1)}$. 
For such $w$, it thus follows from \eqref{eq:ht-injr-1} that $\cfun((g\gamma) w)^{-1} \gg |a|^{1/(N-1)}$. Together with \eqref{eq:ht-injr-2}, this shows
\be\label{eq:ht-injr-3}
|a|^{1/(N-1)} \ll \height(g) = \max\{\cfun(\Ad(g\gamma)w)^{-1}: 0\neq w\in \fsl_N(\Q) \} \ll |a|.
\ee

Now if instead $w \in \fsl_N(\Q)$, we have
\[
\| \Ad(a) w\|_\infty \leq (\max_{i,j} a_i a_j^{-1}) \|w \|_\infty \leq |a|^2 \|w \|_\infty. 
\]
In the same way as above, since $k \in \SO_N(\R)$ and $\|aua^{-1}\|_\mathrm{op} \ll 1$, there is some $c \ll 1$ such that for any $\eta > 0$, 
\[
\Ad(k(aua^{-1}) a)^{-1} B_{\fsl_N(\R)}(\eta) \subset \Ad(a)^{-1} B_{\fsl_N(\R)}(c \eta) \subset B_{\fsl_N(\R)}(c |a|^2 \eta).
\]
Applying the exponential map yields
\[
(k(aua^{-1})a)^{-1}\Xi_{\eta}(k(aua^{-1})a)\subset \Xi_{c |a|^2 \eta}.
\]
Therefore, we have 
\begin{align*}
\gamma^{-1}g^{-1} \Omega_{\eta} g\gamma \cap \SL_N(\Q)&\subset \bigl((kau)^{-1}\Xi_{\eta} kau \cap \SL_N(\Z)\bigr)\times\bigl(\textstyle\prod_p\SL_N(\Z_p)\bigr) \\
&\subset\bigl(\Xi_{c |a|^2 \eta}\cap \SL_N(\Z)\bigr)\times\bigl(\textstyle\prod_p\SL_N(\Z_p)\bigr).
\end{align*}
In particular, if $\eta\ll |a|^{-2}$, then $\gamma^{-1}g^{-1} \Omega_{\eta} g\gamma \cap \SL_N(\Q)=\{1\}$.
That is: $g\in X_{c' |a|^{-2}}$ for perhaps another constant $c' > 0$. This implies the claim in~(1) in view of~\eqref{eq:ht-injr-3}.

To see~(2) in the lemma, let $\eta>0$ and suppose $g\SL_N(\Q)\in X_\eta$.
Let $\gamma\in\SL_N(\Q)$ be so that $g\gamma$ is as in~\eqref{eq:strg-app}. 
For any $w \in \fsl_N(\R)$ in the appropriate root space, we have
\begin{align}
\notag \| \Ad(a)w\|_\infty^{-1} 	&= (\min_{i,j} a_i a_j^{-1})^{-1} \|w\|_\infty = (\max_{i,j} a_i a_j^{-1}) \|w\|_\infty \\
\label{eq:ht-injr-4} 			&\geq N^{-2} \Big(\sum_i a_i \Big) \Big( \sum_j a_{j}^{-1} \Big) \|w\|_\infty \geq N^{-2} |a| \cdot \|w\|_\infty.
\end{align}
Because $k \in \SO_N(\R), \|aua^{-1}\|_\mathrm{op} \ll 1$, we may scale $w$ so that 
\[
w \in \Ad(kaua^{-1})^{-1} B_{\fsl_N(\R)}(\eta)
\] 
while keeping $\|w\|_\infty \gg \eta$. 
With this choice for $w$, we have
\[
\Ad(a)^{-1} w \in \Ad(kau)^{-1} B_{\fsl_N(\R)}(\eta),
\]
that is, $\exp(\Ad(a)^{-1} w) \in (kau)^{-1}\Xi_{\eta} kau$. In consequence, 
\be \label{eq:ht-injr-5}
\| \Ad(a)^{-1} w\|_\infty < 1.
\ee
Indeed, otherwise, we would be able to pick $\Ad(a)^{-1} w$ to be an elementary matrix, for which we would have
\begin{align*}
\exp(\Ad(a)^{-1} w) \in&~\bigl((kau)^{-1}\Xi_{\eta} kau \times \textstyle\prod_p\SL_N(\Z_p)\bigr) \cap \SL_N(\Q) \\
& = \gamma^{-1} g^{-1} \Omega_\eta g \gamma \cap \SL_N(\Q).
\end{align*}
This contradicts the fact $g^{-1} \Omega_\eta g \cap \SL_N(\Q) = \{1\}$. 
In virtue of our choice for $w$, \eqref{eq:ht-injr-4}, and \eqref{eq:ht-injr-5} , we have
\[
|a| \eta \ll |a| \cdot \|w\|_\infty \ll \| \Ad(a)^{-1} w\|_\infty < 1.
\]
Finally, in view of \eqref{eq:ht-injr-3}, this immediately implies
\[
\height(g) \ll |a| \ll \eta^{-1}
\]
and concludes the proof of the lemma. 
\end{proof}

\subsection{Remark} In the definition \eqref{eq:def-ht-g} of the height, instead of using the action of $\SL_N(\A)$ on $\A^N$, one could have acted on $\fsl_N(\A)$ via the adjoint action. More precisely, one could have defined $\widetilde{\height}: \SL_N(\A) \to\bbr^+$ by
\[
\widetilde{\height}(g):=\max\{\cfun(\Ad (g) w)^{-1}: 0\neq w \in \fsl_N(\Q)\},
\]
where the function $\cfun$ is given by the same expression $\cfun(w) :=\prod_{v \in \places} \|w_v \|_v$. 
The proof of lemma \ref{lem:ht-injr} can be used to show that $|a| \ll \widetilde{\height}(g) \ll |a|^2$ (with $a$ as in \eqref{eq:strg-app}), and in consequence that 
\[
\height(g) \ll \widetilde{\height}(g) \ll \height(g)^{2(N-1)}. 
\]
The two heights are thus polynomially related, and for the purpose of Theorem \ref{thm:adelic-red}, they can be used interchangeably.

\subsection{Elements from Bruhat-Tits theory}\label{sec:BrTits} 
We recall a few facts from Bruhat-Tits theory, see~\cite{Ti} and references there for the proofs.
Let $\G$ be a connected semisimple group defined over $\bbq.$ 
Let $p$ be a finite place, then 
\begin{enumerate}
\item For any point $o$ in the Bruhat-Tits building of $\G(\bbq_p),$ 
there exists a smooth affine group scheme $\Gfrak_{p}^{(o)}$ over 
$\Z_p,$ unique up to isomorphism, such that: 
its generic fiber is $\G(\bbq_p),$ and the compact open subgroup $\Gfrak_p^{(o)}(\Z_p)$  
is the stabilizer of $o$ in $\G(\bbq_p),$ see~\cite[3.4.1]{Ti}. \vspace{1mm} 
\item If $\G$ splits over $\Q_p$ and $o$ is a {\it special} point, then\index{special point in the Bruhat-Tits building}
the group scheme $\Gfrak_p^{(o)}$ is a Chevalley group scheme with
generic fiber $\G,$ see~\cite[3.4.2]{Ti}.\vspace{1mm} 
\item $\red_p:\Gfrak_p^{(o)}(\Z_p)\rightarrow\underline{\Gfrak_p}^{(o)}(\Ff_p),$ 
the reduction mod $p$ map, is surjective, see~\cite[3.4.4]{Ti}. \vspace{1mm} 
\item $\underline{\Gfrak_p}^{(o)}$ is connected and semisimple if and only if $o$ is a {\it hyperspecial} point. 
Stabilizers of hyperspecial points in $\G(\bbq_p)$ will be called hyperspecial subgroups, see~\cite[3.8.1]{Ti} and~\cite[2.5]{Pr}.\index{hyperspecial points and subgroups}
\end{enumerate}

If $\G$ is quasi-split over $\bbq_p,$ and splits over $\widehat{\Q_p},$ 
then hyperspecial vertices exists, and they are compact open subgroups of maximal volume. 
Moreover a theorem of Steinberg implies that $\G$ is quasi-split over $\widehat{\Q_p}$ 
for all $p,$ see~\cite[1.10.4]{Ti}.

It is known that for almost all $p$ the group $\G$ 
is quasi-split over $\Q_p,$ see ~\cite[Theorem 6.7]{PR}.
Moreover, for almost all $p$ the groups $K_p$ are hyperspecial, see~\cite[3.9.1]{Ti}.

%%%%%%%%
%%%%%%%%

\section{Small Levi decomposition in Lie algebras}\label{sec:small-levi}
Recall from \S\ref{sec:notation} that $\|\;\|$ 
denotes the (archimedean) max norm both on $\Q^N$ and on $\fsl_N(\Q)$ with respect to the standard basis. 
Note that if $u,v \in \fsl_N(\Q)$, we have $\|[u,v]\| \ll \|u\| \|v\|$. 

If $\fg$ is a subalgebra of $\fsl_N(\Q)$, and $\mathcal B= \{u_1, \dots, u_M \}$ is a 
$\Z$-basis of $\fg \cap \fsl_N(\Z)$, we can also endow $\fg$ with the max norm $\| \; \|_{\mathcal B}$ in the basis $\mathcal B$. 
For any $u \in \fg$ we have
\[
(\max_i \|u_i\|)^{-1} \|u\|\ll \|u\|_{\mathcal B} \ll (\max_i \|u_i\|) \|u\|.
\]

In this section we prove the following.

\begin{propo} \label{LiesmallLevi}\label{prop:LiesmallLevi}
There exists some $\consta\label{k:Levi-exp} > 0$ with the following property.
Let $\fg\subset\fsl_N(\Q)$ be a Lie subalgebra and let $\fr=\rad(\fg)$ be its radical.
Further, let $\mathfrak l\subset\gfrak$ be a reductive subalgebra with $\mathfrak l\cap \fr=\{0\}$ (it may be that $\mathfrak{l} = \{0\}$). 
Assume that $\height(\gfrak)\leq T$ and $\height(\mathfrak l)\leq T$. 
There exists a Levi decomposition $\fg = \fh\oplus \fr$ with $\mathfrak l\subset\fh$, so that 
\[
\height(\fh) \ll T^{\ref{k:Levi-exp}}\quad\text{ and }\quad\height(\fr) \ll T^{\ref{k:Levi-exp}},
\]
where the implied constants depend only on $N$.
\end{propo}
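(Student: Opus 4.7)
My plan is to prove the proposition in three effective stages: (i) extract $\fr = \rad(\fg)$ with $\height(\fr) \ll T^\star$; (ii) choose a $\Q$-linear complement $\fh_0 \supset \mathfrak l$ of $\fr$ in $\fg$ with $\height(\fh_0) \ll T^\star$; and (iii) modify $\fh_0$ by the graph of a linear map $\lambda\colon \fh_0 \to \fr$ of polynomial height so that $\fh := \{x + \lambda(x) : x \in \fh_0\}$ is a Lie subalgebra still containing $\mathfrak l$. The first two stages are essentially rational linear algebra; the main work lies in stage (iii), which rests on an effective version of Whitehead's second lemma.

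For stage (i), I would compute $\fr$ iteratively: set $\fg_0 = \fg$ and define $\fg_{i+1} = \fg_i / \fg_i^\perp$, where $\perp$ is taken with respect to the Killing form of $\fg_i$. Since $\fg_i^\perp$ is a solvable ideal of $\fg_i$ and the process stops when the Killing form becomes non-degenerate (i.e., when the quotient is semisimple), the total kernel of $\fg \to \fg_k$ is exactly $\fr$; the process terminates in $\le \dim \fg \le N^2$ steps. Because the Killing form in a basis of height $\ll T$ has a Gram matrix of polynomial height in $T$, each step multiplies heights by $T^\star$, so finally $\height(\fr) \ll T^\star$. For stage (ii), since $\mathfrak l \cap \fr = \{0\}$ and both subspaces have height $\ll T^\star$, a standard rational-complement (Siegel-type) lemma produces $\fh_0 \supset \mathfrak l$ with $\fh_0 \oplus \fr = \fg$ and $\height(\fh_0) \ll T^\star$.

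For stage (iii), identify $\fh_0 \cong \fg/\fr$ as vector spaces and regard $\fr$ as an $\fg/\fr$-module via the adjoint action. The obstruction to $\fh_0$ being a Lie subalgebra is the Chevalley--Eilenberg $2$-cochain $\omega(x,y) := [x,y] - \mathrm{pr}_{\fh_0}[x,y] \in \fr$, where $\mathrm{pr}_{\fh_0}$ is the projection along $\fr$; a direct computation shows that $\omega$ is a $2$-cocycle and that replacing $\fh_0$ by the graph of $\lambda$ changes $\omega$ to $\omega - d\lambda$. Since $\fg/\fr$ is semisimple, Whitehead's second lemma guarantees a solution $\lambda$ of $d\lambda = \omega$, whence $\fh$ exists. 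To force $\mathfrak l \subset \fh$, observe that $\omega|_{\mathfrak l \times \mathfrak l} = 0$ because $[\mathfrak l,\mathfrak l] \subset \mathfrak l \subset \fh_0$; decomposing the cochain complex $\mathfrak l$-equivariantly (which is possible since $\mathfrak l$ is reductive) allows one to choose $\lambda$ with $\lambda|_{\mathfrak l} = 0$.

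The main obstacle is to make Whitehead's lemma polynomially effective, i.e., to guarantee $\height(\lambda) \ll T^\star$. I would proceed via the standard Casimir-inversion argument: the Casimir $C$ of $\fg/\fr$, built from a basis of $\fg/\fr$ dual under the Killing form (whose Gram determinant is a nonzero integer bounded in terms of $N$), acts on $C^\bullet(\fg/\fr;\fr)$ by operators of polynomial height in $T$, commutes with $d$, and has eigenvalues bounded away from $0$ on the non-invariant isotypic component by the classical computation for semisimple Lie algebras of dimension $\le N^2$; inverting $C$ there and applying a standard chain-homotopy formula yields an explicit primitive $\lambda$ with $\height(\lambda) \ll \height(\omega)^\star \ll T^\star$. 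To handle non-abelian $\fr$ I would induct on the derived length of $\fr$ (which is $\ll \log N$): treat the abelian case as above and, in general, apply the construction to $\fg / [\fr,\fr]$ and then lift through the extension $0 \to [\fr,\fr] \to \fg \to \fg/[\fr,\fr] \to 0$, paying only a factor of $T^\star$ per level.
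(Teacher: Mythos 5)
Your overall architecture (extract $\fr$ with small height, pick a linear complement containing $\mathfrak l$, correct it by the graph of a $1$-cochain $\lambda$ solving $d\lambda=\omega$, and induct on the derived length of $\fr$) is sound and parallels the paper's proof, which also inducts on $\ell_{\rm d}(\fr)$ and at each stage characterizes the candidate complements as solutions of a linear system. The gap is in your central effective step, the Casimir inversion. The standard homotopy formula behind Whitehead's second lemma inverts the Casimir of the \emph{module} action of $\mathfrak s=\fg/\fr$ on $\fr/[\fr,\fr]$, and that operator is identically zero on the trivial isotypic component $(\fr/[\fr,\fr])^{\mathfrak s}$; the component of $\omega$ valued there need not vanish, and for it your argument produces no primitive at all, let alone one of height $T^{\star}$. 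This is exactly the classically delicate case $\rH^2(\mathfrak s;\Q)=0$ of Levi's theorem (central extensions), which the Casimir trick does not reach. If instead you mean the Casimir acting on the full cochain spaces $\Lambda^2\mathfrak s^{*}\otimes \fr$, then identifying it with a Laplacian $d\delta+\delta d$ requires an invariant inner product (a compact form), and carrying that out over $\Q$ with height control is a substantial task you have not addressed. Similarly, the assertion that an $\mathfrak l$-equivariant decomposition of the cochain complex lets you take $\lambda|_{\mathfrak l}=0$ is a pointer to Mostow's theorem rather than an effective argument.

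The fix --- and the paper's actual route --- is to dispense with the Casimir entirely: the conditions ``$d\lambda=\omega$ and $\lambda|_{\mathfrak l}=0$'' (in the paper's formulation, membership in the set $E$ of maps $f:\fg\to\fr/[\fr,\fr]$ satisfying (a)--(c)) form a single inhomogeneous linear system whose integer coefficients are $\ll T^{\star}$ in a $\Z$-basis adapted to $\mathfrak l$, $\fr$ and $[\fr,\fr]$, and whose \emph{consistency} is precisely the qualitative Levi--Mostow theorem. Siegel's lemma for inhomogeneous systems (Lemma~\ref{inSiegel}) then yields a solution of height $T^{\star}$ in one stroke, with no case distinction between isotypic components and with the $\mathfrak l$-containment imposed as extra linear equations. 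This costs the self-containedness your Casimir route was aiming for (one must quote the non-effective existence theorem to know the system is consistent), but it is what closes the argument. A smaller remark: your stage (i) iteration of Killing-form kernels is a detour; by Cartan's criterion $\fr=[\fg,\fg]^{\perp}$ for the Killing form in a single step, which is how the paper bounds $\height(\fr)$.
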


Roughly speaking, the proof of the proposition is based of the following phenomenon: a {\em consistent} system of linear equations with integral coefficients which are bounded by $T$ has a solution of norm $\ll T^\star$. 

Let us also note that if $\rad(\G)=\rad_u(\G)$ and $\gfrak=\Lie(\G)$, 
the condition $\mathfrak l\cap \mathfrak r=\{0\}$ holds true for any reductive subalgebra. 

\subsection{Systems of integral linear equations}\label{sec:small-sol}
For the convenience of the reader, in this section we record some lemmas which provide estimates on the size of solutions of systems of linear equations with integral coefficients.

We note that the following lemmas aim for {\em good} polynomial bounds. 
If one is content with a rough polynomial bound,  
one could easily prove
\[
|x_{j}^l| \leq \sqrt{(N-1)!} \cdot (\max_{ij} |a_{ij}|)^{N-1}
\]
in the first lemma and the bound $\| v_i \| \leq N \max_{j} \|u_j\|$ in the third lemma --- these rough bounds suffice for our applications as well.

\begin{lem}[Siegel's lemma] \label{Siegel}
Let $A = (a_{ij})$ be a $M \times N$-matrix ($N>M$) of full rank, with integer coefficients $a_{ij}$, and
\[
\begin{cases} \sum_{j=1}^N a_{ij} x_j = 0 \qquad i= 1, \dots, M \end{cases}
\]
the associated linear system. There exists a basis $\{(x_{1}^{l}, \dots, x_{N}^l) \mid l=1, \dots, N-M \}$ of the space of solutions of the system satisfying $x_{j}^l \in \Z$ and $|x_{j}^l| \leq \sqrt{|\det A A^{T}|}$ for $l=1, \dots N-M$. 
\end{lem}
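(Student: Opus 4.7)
The plan is to reduce the coordinate-wise bound to a bound on the product of sup-norms of a $\Z$-basis of the integer kernel $\Lambda := \ker A \cap \Z^N$, and then to appeal to the sharpened form of Siegel's lemma due to Bombieri and Vaaler.

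First, I would observe that it suffices to produce a $\Z$-basis $\{x^{(1)},\dots,x^{(N-M)}\}$ of $\Lambda$ satisfying the product bound
\[
\prod_{l=1}^{N-M} \|x^{(l)}\|_\infty \leq \sqrt{\det AA^T}.
\]
Indeed, each nonzero integer vector $x^{(l)} \in \Z^N$ has $\|x^{(l)}\|_\infty \geq 1$, so the product bound immediately yields $|x^{(l)}_j| \leq \|x^{(l)}\|_\infty \leq \sqrt{\det AA^T}$ for every coordinate; this in particular gives a $\Q$-basis of the solution space $\ker A \subset \Q^N$ with the required property.

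Next, I would view $\Lambda$ as a lattice of rank $N-M$ inside the Euclidean subspace $V := \ker(A : \R^N \to \R^M)$. A Cauchy--Binet calculation (stack a $\Z$-basis of $\Lambda$ with the rows of $A$ to form an $N \times N$ matrix, then compare its Gram determinant with $\det AA^T$, using the orthogonality of $V$ and the row span of $A$) shows that the Euclidean covolume of $\Lambda$ in $V$ equals $\sqrt{\det AA^T}/D$, where $D$ is the $\gcd$ of the $M \times M$ minors of $A$; in particular it is at most $\sqrt{\det AA^T}$. The theorem of Bombieri and Vaaler---an adelic sharpening of Minkowski's successive-minima theorem applied to the maximum-norm convex body restricted to $V$---then produces a $\Z$-basis $\{x^{(l)}\}$ of $\Lambda$ whose sup-norms satisfy $\prod_l \|x^{(l)}\|_\infty$ at most this covolume, which completes the argument.

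The main obstacle is obtaining the constant $1$ in the product bound. A direct application of Minkowski's second theorem (with the Euclidean norm on $V$), followed by Mahler's procedure for converting successive-minima vectors into a $\Z$-basis, and then passing from $\|\cdot\|_2$ to $\|\cdot\|_\infty$ via $\|\cdot\|_\infty \leq \|\cdot\|_2$, would only yield a bound of the form $c_{N-M}\sqrt{\det AA^T}$ with a dimension-dependent multiplicative constant. Such a weaker bound would already suffice for the applications in Proposition~\ref{prop:LiesmallLevi} and the rest of the paper, since constants depending only on $N$ are absorbed into $\ll$; the sharp constant $1$ genuinely requires the adelic geometry-of-numbers framework of Bombieri and Vaaler.
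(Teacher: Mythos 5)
Your proposal is correct and follows the same route as the paper, which simply cites Bombieri--Vaaler \cite[Thm.~2]{BombieriVaaler83}; you merely spell out the (correct) deduction of the coordinate-wise bound from their product bound via $\|x^{(l)}\|_\infty \geq 1$. The observation that a dimension-dependent constant would suffice for the paper's purposes is accurate but not needed.
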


\begin{proof}
See \cite[Thm.~2]{BombieriVaaler83}. 
\end{proof}

\begin{lem}[Siegel's lemma for inhomogeneous equations] \label{inSiegel}
Let 
\[
\begin{cases} \sum_{j=1}^N a_{ij} x_j = b_i \qquad i= 1, \dots, M \end{cases}
\]
be a consistent system of $M$ linear equations in $N>M$ variables, with integer coefficients $a_{ij}$. Then the system has a solution $(\frac{y_1}{d}, \dots, \frac{y_N}{d})$ with $y_i, d \in \Z$ and 
\[
\max_i \{|y_i|, |d|\} \ll(\max_{ij} |a_{ij}|)^\star.
\] 
\end{lem}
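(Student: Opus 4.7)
The plan is to reduce the inhomogeneous statement to the homogeneous Lemma~\ref{Siegel} via projectivization. I would introduce a new variable $x_{N+1}$ and consider the associated homogeneous system
\[
\textstyle\sum_{j=1}^N a_{ij}x_j - b_i x_{N+1} = 0,\qquad i=1,\ldots,M,
\]
in the $N+1$ unknowns. Any integral solution with last coordinate $d := x_{N+1}\neq 0$ produces a rational solution $(y_1/d,\ldots,y_N/d)$ of the original system with $y_j := x_j$. Since the stated bound involves only $\max_{ij}|a_{ij}|$, I will tacitly assume (as appears intended) that the $b_i$ are integers whose size is absorbed into the $\star$-notation; equivalently, set $T := \max_{ij}|a'_{ij}|$, where $A' := [A \mid -b]$ is the augmented $M\times(N+1)$ matrix.

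The next step is to apply the homogeneous Siegel lemma to $A'$. Let $M' := \rank(A)$. Consistency of the original system is equivalent to $\rank(A')=M'$, so I may discard $M - M'$ linearly dependent rows and assume $A'$ has full row rank $M'$. Since $N>M\geq M'$, we still have $N+1 > M'$, and Lemma~\ref{Siegel} then furnishes an integral basis $v^{(1)},\ldots,v^{(N+1-M')}$ of $\ker A'$ with entries bounded by $\sqrt{|\det A'(A')^{T}|}$. A direct application of Hadamard's inequality bounds this determinant by $(N+1)^{M'}T^{2M'}$, so each entry of every $v^{(\ell)}$ is of size $\ll T^\star$.

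The remaining step is to extract a basis vector with nonzero last coordinate. Observe that the subspace of $\ker A'$ cut out by $x_{N+1}=0$ is exactly $\ker A\times\{0\}$, of dimension $N-M'$, whereas $\dim \ker A' = N+1-M'$; hence at least one of $v^{(1)},\ldots,v^{(N+1-M')}$ must have nonzero $(N+1)$-th coordinate (otherwise the basis would span only a proper subspace). That vector provides the desired $(y_1,\ldots,y_N,d)$ with $d\neq 0$ and $\max_i\{|y_i|,|d|\}\ll T^\star$.

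The only real subtlety is bookkeeping rather than a genuine mathematical obstacle: the statement's bound is expressed only in terms of $\max_{ij}|a_{ij}|$, but the argument naturally involves the sizes of the $b_i$ as well. Either one assumes $|b_i|$ is of comparable magnitude (which is the only way the bound can be meaningful as written), or one interprets the bound as being on the entries of the augmented matrix. Beyond this, the proof is a routine combination of Siegel's lemma and a dimension count.
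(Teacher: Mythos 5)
Your proof is correct, but it takes a different route from the paper. The paper deduces the lemma directly from the inhomogeneous Siegel lemma of O'Leary--Vaaler \cite{OLearyVaaler93}: it first uses consistency to produce some rational solution, lets $P$ be the set of places dividing its denominators together with $\infty$, and then invokes \cite[Thm.~2 and 3]{OLearyVaaler93} for that set of places, noting that the resulting height bound is independent of $P$ and translates into the stated polynomial bound. You instead homogenize by adjoining a variable $x_{N+1}$ with column $-b$, apply the already-quoted Bombieri--Vaaler lemma (Lemma~\ref{Siegel}) to the augmented matrix after discarding dependent rows, bound the Gram determinant by Hadamard, and use the dimension count $\dim\ker A' = N+1-M' > N-M' = \dim(\ker A\times\{0\})$ to guarantee a small basis vector with nonzero last coordinate. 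Your argument is more self-contained --- it needs no input beyond Lemma~\ref{Siegel} --- and is entirely adequate for the $\star$-bounds required here; the paper's citation outsources the work to a ready-made adelic statement. Your caveat about the $b_i$ is well taken and applies equally to the paper's proof: as the example $x_1+x_2=b$ shows, the bound cannot literally be independent of the $b_i$, but in the application (Step~3 of the proof of Proposition~\ref{prop:LiesmallLevi}) the right-hand sides are integers bounded by $T^\star$ along with the $a_{ij}$, so interpreting the bound as being in terms of the entries of the augmented matrix is the intended reading.
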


\begin{proof}
The lemma is deduced from \cite[Thm.~2 and 3]{OLearyVaaler93}. 
First, by assumption, the system has a solution $(\frac{y_1}{z_1}, \dots, \frac{y_N}{z_N})$ in $\Q^N$. 
Set $P = \{p\in\places_f : p | z_i \textrm{ for some $1\leq i\leq N$}\}\cup \{\infty\}$. 
Then \cite[Thm.~2 and 3]{OLearyVaaler93} apply to our system and the set $P$ of places, and yield a solution of the system with bounded height. 

The bound on the height is independent of $P$, and in our setting, it readily translates to a polynomial bound on $\max_i \{|y_i|, |z_i|\}$. 
\end{proof}

\begin{lem}[extracting small $\Z$-bases] \label{smallZbasis}
Let $V$ be a vector space over $\Q$ endowed with a norm $\| \cdot \|$ and let $V_\Z$ be a free $\Z$-submodule of $V$ which spans $V$ over $\Q$. Given a basis $\{u_1, \dots, u_N\}$ of $V$ over $\Q$ lying in $V_\Z$, there exists a subset $\{v_1, \dots, v_N\}$ of $V_\Z$ with the property that $\{v_1, \dots, v_i\}$ is a $\Z$-basis of $(\Q u_1 + \dots + \Q u_i) \cap V_\Z$ and $\|v_i\| \leq \sum_{j=1}^i \|u_j\|$ for $i = 1, \dots, N$. 
\end{lem}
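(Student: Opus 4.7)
The plan is to build $v_1, \dots, v_N$ one at a time by induction on $i$.

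For the base case $i=1$, the rank-one saturated sublattice $L_1 := (\Q u_1) \cap V_\Z$ equals $\Z v_1$ for some primitive vector $v_1 \in V_\Z$, and the relation $u_1 = k_1 v_1$ with $k_1 \in \Z_{>0}$ gives $\|v_1\| = \|u_1\|/k_1 \le \|u_1\|$ immediately.

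For the inductive step, suppose $v_1, \dots, v_{i-1}$ have already been constructed, forming a $\Z$-basis of $L_{i-1}$ with $\|v_j\| \le \sum_{l \le j}\|u_l\|$. Since $L_i/L_{i-1}$ is a free $\Z$-module of rank one, I would pick a generator $\bar v$ and let $k \in \Z_{>0}$ be the integer with $\bar u_i = k \bar v$. Any lift of $\bar v$ to $L_i$ extends $\{v_1,\dots,v_{i-1}\}$ to a $\Z$-basis of $L_i$; concretely, such lifts are parametrized as
\[
v_i \;=\; \tfrac{1}{k}\bigl(u_i - c_1 v_1 - \cdots - c_{i-1} v_{i-1}\bigr), \qquad c_j \in \Z,
\]
and translating $v_i$ by an element of $L_{i-1}$ corresponds to modifying each $c_j$ by an integer multiple of $k$. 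In the trivial case $k=1$, one may take all $c_j = 0$, whence $v_i = u_i$ and $\|v_i\| = \|u_i\| \le \sum_{j \le i} \|u_j\|$ at once.

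The main obstacle is the case $k \ge 2$, where the crude triangle-inequality bound
\[
\|v_i\| \;\le\; \tfrac{1}{k}\|u_i\| + \tfrac{1}{k}\textstyle\sum_{j<i} |c_j|\,\|v_j\|,
\]
combined with the inductive hypothesis on $\|v_j\|$, does not automatically yield the advertised estimate: one has to choose the residues $c_j \pmod k$ carefully so that the error does not compound through the induction. My plan would be to put the upper-triangular change-of-basis matrix sending $\{v_1,\dots,v_i\}$ to $\{u_1,\dots,u_i\}$ into Hermite normal form (by normalizing the $c_j$'s), then re-expand $v_i$ in the basis $\{u_1,\dots,u_i\}$ as $v_i = \sum_{j \le i} \lambda_{ij} u_j$ and bound the $\lambda_{ij}$'s directly from the HNF normalization. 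An equivalent route is to apply Lemma~\ref{inSiegel} to the integral linear system cut out by the conditions ``$v_i \in V_\Z$'' and ``$\bar v_i$ generates $L_i/L_{i-1}$'', producing a lift with small coefficients in the $u$-basis and hence controlled norm. In both approaches the technical heart of the argument is the coefficient bookkeeping in the $k \ge 2$ case; once this is done the triangle inequality closes the induction and gives $\|v_i\| \le \sum_{j=1}^i \|u_j\|$.
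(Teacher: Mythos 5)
Your setup is sound and follows the same route as the paper, which does it in one shot: take any $\Z$-basis $\{v_1,\dots,v_N\}$ of $V_\Z$, put the change-of-basis matrix $A$ with $(u_1,\dots,u_N)=(v_1,\dots,v_N)A$ into Hermite normal form, read off the nested-basis property from triangularity, and extract the norm bounds from the column-dominance condition $a_{ji}<a_{ii}$. The problem is that your write-up stops exactly at the only step of the lemma that requires an argument. Declaring that ``the technical heart of the argument is the coefficient bookkeeping\dots once this is done the triangle inequality closes the induction'' is not a proof of that heart, and neither of the two routes you point at closes it as stated. First, HNF normalization controls the coefficients of $u_i$ in the basis $\{v_j\}$, not the coefficients $\lambda_{ij}$ of $v_i$ in the basis $\{u_j\}$; the latter are entries of $A^{-1}$, and these need not be bounded by $1$ for an HNF matrix. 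For instance, with $a_{11}=1$, $a_{22}=a_{33}=a_{44}=10$, $a_{12}=a_{23}=a_{34}=a_{14}=9$, $a_{13}=a_{24}=0$ (all column-dominance conditions hold), one computes $(A^{-1})_{14}=-1629/1000$, so ``bounding the $\lambda_{ij}$'s directly from the HNF normalization'' does not deliver $\|v_i\|\le\sum_{j\le i}\|u_j\|$. Second, Lemma~\ref{inSiegel} bounds a solution in terms of the \emph{coefficients} of the linear system, and only up to unspecified polynomial exponents; it cannot produce the additive bound in terms of the norms $\|u_j\|$ claimed here.

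Here is how to finish your induction cleanly. Any admissible $v_i$ can be written as $v_i=\sum_{j\le i}\lambda_{ij}u_j$ with $\lambda_{ii}=1/k\in(0,1]$, and you are free to replace $v_i$ by $v_i-\sum_{j<i}n_ju_j$ with $n_j\in\Z$, since $\Z u_1+\dots+\Z u_{i-1}\subseteq L_{i-1}$, so this changes $v_i$ only within its coset modulo $L_{i-1}$ and keeps it in $V_\Z$. Reducing each $\lambda_{ij}$ with $j<i$ modulo $\Z$ into $[-1/2,1/2]$ — i.e.\ normalizing the coefficients in the $u$-basis rather than the $v$-basis — gives
\[
\|v_i\|\;\le\;\|u_i\|+\tfrac12\textstyle\sum_{j<i}\|u_j\|\;\le\;\textstyle\sum_{j\le i}\|u_j\|,
\]
with nothing compounding through the induction. (Alternatively, the recursion $\|v_i\|\le\|u_i\|+\sum_{j<i}\|v_j\|$ that the HNF computation yields gives $\|v_i\|\le 2^{i-1}\max_j\|u_j\|$, which is all that is ever used downstream since all constants may depend on $N$ — but it does not give the constant claimed in the statement.)
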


\begin{proof}
Let $\{v_1, \dots, v_N\}$ be a $\Z$-basis of $V_\Z$ and $A = (a_{ij})$ be the integer matrix such that $(u_1, \dots, u_N) = (v_1, \dots, v_N) A$. Up to a change of the basis $\{v_1, \dots, v_N\}$, we may assume that $A$ is in Hermite normal form, i.e., $A$ is upper triangular, all its entries are non-negative, and in a given column, the entry on the diagonal is strictly bigger than the other ones. 
We then have
\[
\begin{cases}
u_1 = a_{11} v_1 \\
u_2 = a_{12}v_1 + a_{22} v_2 \\
\qquad \vdots \\
u_m = a_{1m} v_1 + \dots + a_{NN} v_N.
\end{cases} 
\]
If $w \in (\Q u_1 + \dots + \Q u_i) \cap V_\Z$, we may write $w = \sum_{j=1}^N \lambda_j v_j$ with $\lambda_j \in \Z$. Now
\[
w - \sum_{j=1}^i \lambda_j v_j = \sum_{j=i+1}^N \lambda_j v_j \in (\Q v_1 + \dots + \Q v_i) \cap (\Q v_{i+1} + \dots + \Q v_N) = \{0\},
\]
and it follows that $w = \sum_{j=1}^i \lambda_j v_j$, i.e.~$\{v_1, \dots, v_i\}$ is a $\Z$-basis of $(\Q u_1 + \dots + \Q u_i) \cap V_\Z$. 

Lastly, regrouping terms and taking norms in the system above yields
\[
\begin{cases}
\|v_1\| = \frac{\|u_1\|}{a_{11}} \leq \| u_1 \| \\
\|v_2\| = \frac{\| u_2 - a_{12}v_1\|}{a_{22}} < \|u_2\| + \|v_1\| \\
\qquad \vdots \\
\|v_m\| = \frac{\| u_m - a_{1N}v_1 - \dots - a_{(N-1) N} v_{N-1}\|}{a_{NN}} < \|u_2\| + \|v_1\| + \dots \|v_{N-1}\|.
\end{cases}
\]
The lemma follows by combining all the inequalities. 
\end{proof}

\subsection{Proof of Proposition~\ref{prop:LiesmallLevi}}\label{sec:proof-prop-levi}
We need to find a Levi decomposition $\fg = \fh\oplus \fr$, where $\fr$ is the radical of $\gfrak$, and $\Z$-bases $\{w_1, \dots, w_n\}$ of $\fh \cap \fsl_N(\Z)$, and $\{v_1, \dots, v_m\}$ of $\fr \cap \fsl_N(\Z)$ which satisfy that 
\[
\text{$\|v_i \| \leq T^\star$ and $\|w_j\| \leq T^\star$ for all $i, j$. }
\]

If $\mathfrak l \neq 0$, let $\{\tilde u_1,\ldots, \tilde u_l\}$ be a $\Z$-basis for $\mathfrak l\cap\fsl_N(\Z)$ with $\|\tilde u_i\|\ll T^\star$.
Extend this to a $\Q$-basis $\tilde{\mathcal B} = \{\tilde u_1, \dots, \tilde u_M \}\subset \fg \cap \fsl_N(\Z)$ for $\fg$ with $\|\tilde u_i\| \ll T^\star$ for all $i$.
By the {\em extracting small $\Z$-bases} lemma in \S\ref{smallZbasis}, there exists a $\Z$-basis 
$\hat{\mathcal B}=\{\hat u_1\dots,\hat u_M\}$ for $\fg\cap\fsl_N(\Z)$ so that
$\|\hat u_i\| \leq T^\star$ for all $i$ and $\{\hat u_1,\ldots,\hat u_i\}$ is a $\Z$-basis for $(\Q \tilde u_1 + \dots + \Q \tilde u_i) \cap \fsl_N(\Z)$.
In particular, $\{\hat u_1,\ldots,\hat u_l\}$ is a $\Z$-basis for $\mathfrak l\cap\fsl_N(\Z)$ if $\mathfrak l\neq0$.

Note that the structure constants $\{\alpha_{ij}^k \}$ of $\fg$ in the basis $\{u_i\}$ are bounded: 
\be\label{eq:str-cont}
\max_k | \alpha_{ij}^k | = \| [\hat u_i,\hat u_j]\|_{\hat{\mathcal B}} \ll (\max_j{\|\hat u_j\|}) \cdot \| [\hat u_i,\hat u_j]\| \ll T^\star.
\ee
As $\hat{\mathcal B}$ is a $\Z$-basis for $\fg \cap \fsl_N(\Z)$, the $\{\alpha_{ij}^k \}$ are integers. 

\medskip

{\em Step 1.}\ Bounding $\height(\fr)$.

Let $\mathsf k$ denote the killing form of $\gfrak$. Recall that the radical $\fr=\rad(\gfrak)$ is the orthogonal complement of the derived algebra 
$[\fg,\fg]$ for $\mathsf k$. Thus $\fr$ is given in the basis $\hat{\mathcal B}$ by the solutions $(y_i)$ of the system 
\[
\mathsf k\biggl(\sum_{i=1}^M y_i \hat u_i, [\hat u_j,\hat u_k]\biggr) = 0,\quad j,k = 1, \dots, M.
\]
The coefficients of this system are $\ll T^\star$.
%as follows: 
%\[
%\| \ad u_i \circ \ad [u_j,u_k] (u_l) \|_{\mathcal B} \leq \| [u_i, [ [u_j,u_k], u_l]] \| \ll T^4,
%\]
%whence, computing the trace in the basis $B$,
%\[
%|\mathsf k(u_i, [u_j, u_k])| = |\Tr(\ad u_i \circ \ad [u_j,u_k])| \ll T^4.
%\]
%
%Moreover, because the structure constants are integers, so are the coefficients $\mathsf k(u_i, [u_j, u_k])$. 
Thus, after removing redundant equations from the system, we may apply {\em Siegel's lemma} combined with {\em extracting small $\Z$-bases lemma} from \S\ref{sec:small-sol} and obtain the following.
There exists a $\Z$-basis $\{v_1, \dots, v_m\}$ of $\fr \cap \fsl_N(\Z)$, so that $\|v_i\|_{\hat{\mathcal B}} \ll T^\star$. In consequence, 
we get that
\be\label{eq:ht-rad}
\|v_i\| \ll (\max_j{\|\hat u_j\|}) \cdot \|v_i\|_{\hat{\mathcal B}} \ll T^\star.
\ee

{\em Step 2.}\ A basis for $\gfrak$ adapted to $\mathfrak l$, $\fr$, and $[\fr,\fr]$.

Let $\{v_1, \dots, v_m\}$ be a $\Z$-basis of $\fr \cap \fsl_N(\Z)$ as constructed above. 
We first gather a basis of $[\fr,\fr]$ among $\{[v_i, v_j] \mid i,j = 1, \dots, m\}$, then extend this to a $\Q$-basis, $\mathcal C$,
of $\fr$ by adding an appropriate subset of $\{v_1, \dots, v_m \}$ to it. 
Finally, we extend $\mathcal C$ to a $\Q$-basis, $\mathcal B'$, of $\fg$ by adding an appropriate subset of 
$\{\hat u_1,\dots, \hat u_M\}$ to $\mathcal C$. 
Note that if $\mathfrak l\neq0$, we may obtain $\{\hat u_1,\dots,\hat u_l\}\subset \mathcal B'$ because $\mathfrak l\cap\fr=\{0\}$. 

Applying the {\em extracting small $\Z$-bases lemma} from \S\ref{sec:small-sol} yields a $\Z$-basis 
$\mathcal B=\{u_1, \dots, u_M\}$, of $\fg \cap \fsl_N(\Z)$ so that
\begin{enumerate}
\item $\|u_i\|\ll T^\star$.
\item $\{u_1,\ldots,u_{m'}\}$ is a $\Z$-basis for $[\fr,\fr] \cap \fsl_N(\Z)$.
\item $\{u_1,\ldots, u_m\}$ is a $\Z$-basis for $\fr \cap \fsl_N(\Z)$.
\item $\{u_1,\ldots,u_{m+l}\}$ is a $\Z$-basis for $(\mathfrak l\oplus \rfrak) \cap \fsl_N(\Z)$.
\end{enumerate} 
In particular, $\{u_{m'+1}, \dots, u_{m} \}$ projects to a basis of $\rfrak/ [\fr,\fr]$. 
Let us write $\mathcal D:=\{u_{m'+1} + [\fr,\fr], \dots, u_{m} + [\fr,\fr] \}$.

Also note that for $1\leq i\leq l$ and $1\leq j\leq m+l$, 
there are $c_{ij}\in\Z$ with $|c_{ij}|\ll T^\star$ so that for each $1\leq i\leq k$ we have 
\be\label{eq:uhat-u}
\hat u_i=\sum_{j=1}^{m+l} c_{i,j}u_j.
\ee  

\medskip

{\em Step 3.}\ Finding a Levi subalgebra $\fh$ with small height. 

We argue by induction on $\ell_{\rm d}(\fr)$, the derived length of the radical $\fr$. When $\ell_{\rm d}(\fr)=0$, $\fg$ is semisimple, and it suffices to set $\fh = \fg$. 

Therefore, let us assume that $\ell_{\rm d}(\fr)\geq 1$. Define
\[
E = \{ f \in {\rm End}(\fg,  \fr /\mathfrak  [\fr,\fr]) : \text{$f$ satisfies (a), (b), and (c)}\},
\]
where ${\rm End}(\fg, \fr / [\fr,\fr])$ 
denotes the set of $\Q$-linear maps from $\fg$ to $\fr /[\fr,\fr]$,  
and
\begin{enumerate}
\item[(a)] $\mathfrak l\subset\ker f$,
\item[(b)] $f$ restricts to the canonical projection $\fr \to \fr / [\fr,\fr]$,
\item[(c)] $f([u,v]) = [u+[\fr,\fr],f(v)] + [f(u),v+[\fr,\fr]]$ for all $u,v \in \fg$.
\end{enumerate}

If $\fh$ is a Levi subalgebra of $\fg$ which contains $\mathfrak l$, then the canonical projection 
$\fg = \fr \oplus \fh \to \fr/ [\fr,\fr]$ (whose kernel is precisely $[\fr,\fr] \oplus \fh$) belongs to $E$. 
Now, since $\mathfrak l$ is reductive, there exists a Levi subalgebra $\mathfrak h$
so that $\mathfrak l\subset\fh$, see~\cite{Mostow}. Therefore, $E\neq \emptyset$.

\begin{claim} 
If $f \in E$, then $\ker f$ is a Lie subalgebra of $\fg$ whose radical is $[\fr,\fr]$. 
\end{claim}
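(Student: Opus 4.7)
The plan is to verify directly the three pieces that together characterize $\ker f$: that it is closed under brackets, that it meets $\fr$ in exactly $[\fr,\fr]$, and that together with $\fr$ it fills out $\fg$. From there the radical computation drops out because $\ker f / [\fr,\fr]$ will turn out to be semisimple.

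First I would use condition~(c) to show $\ker f$ is a subalgebra. If $u, v \in \ker f$, then $f(u)=f(v)=0$, and plugging into (c) gives
\[
f([u,v]) \;=\; [u+[\fr,\fr],\, 0] \,+\, [0,\, v+[\fr,\fr]] \;=\; 0,
\]
so $[u,v]\in\ker f$. (Implicitly one is using that $[\fr,\fr]$ is an ideal of $\fg$, so that the bracket on $\fr/[\fr,\fr]$ induced by $\ad$ is well-defined; this is immediate since $\fr$ is an ideal and $[\fr,\fr]$ is a characteristic ideal of $\fr$.)

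Next I would extract structural information from condition~(b), which says $f|_{\fr}$ is the canonical projection $\fr\to\fr/[\fr,\fr]$. This gives immediately
\[
\ker f \cap \fr \;=\; [\fr,\fr]\quad\text{and}\quad f|_{\fr}\ \text{is surjective.}
\]
The surjectivity allows one to correct any $u\in\fg$ by some $w\in\fr$ with $f(w)=f(u)$, so that $u-w\in\ker f$ and hence $\fg=\ker f+\fr$. Combining the two displayed facts, the second isomorphism theorem yields
\[
\ker f \,/\, [\fr,\fr] \;=\; \ker f \,/\, (\ker f\cap \fr) \;\cong\; (\ker f + \fr)/\fr \;=\; \fg/\fr,
\]
which is semisimple.

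Finally, for the radical computation: $[\fr,\fr]$ is a solvable ideal of $\fg$, hence a solvable ideal of the subalgebra $\ker f$, so $[\fr,\fr] \subset \rad(\ker f)$. Conversely, $\rad(\ker f)/[\fr,\fr]$ is a solvable ideal of the semisimple Lie algebra $\ker f/[\fr,\fr]$ and therefore vanishes, so $\rad(\ker f) = [\fr,\fr]$.

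There is no real obstacle here; the claim is essentially bookkeeping once one recognizes that condition~(c) is precisely the statement that $f$ is a derivation from $\fg$ into the $\fg$-module $\fr/[\fr,\fr]$ (the action factoring through $\fg/[\fr,\fr]$), and that (b) pins down its restriction to $\fr$. The only mild subtlety worth flagging is the well-definedness of the bracket $[u+[\fr,\fr],f(v)]$ appearing in (c), which relies on $[\fr,\fr]$ being an ideal of $\fg$ and on $\fr/[\fr,\fr]$ being annihilated by $\fr$ itself, so that it is genuinely a $\fg/[\fr,\fr]$-module. Condition~(a) on $\mathfrak{l}$ plays no role in this particular claim; it will presumably be used downstream to guarantee that the Levi subalgebra produced from a suitably chosen $f\in E$ contains $\mathfrak{l}$.
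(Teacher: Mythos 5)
Your proof is correct and follows essentially the same route as the paper: condition~(c) gives that $\ker f$ is a subalgebra, and condition~(b) gives $\ker f\cap\fr=[\fr,\fr]$ together with $\fg=\ker f+\fr$, from which the radical computation follows. The only (immaterial) difference is in the last step, where you invoke the isomorphism $\ker f/[\fr,\fr]\cong\fg/\fr$ with a semisimple algebra, whereas the paper observes directly that $\fr+\rad(\ker f)$ is a solvable ideal of $\fg$ and hence lies in $\fr$.
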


\begin{proof}[Proof of the claim]
First note that in view of (c) above, $\ker f$ is a subalgebra.
Also, it is clear from (b) that $[\fr,\fr] \subset \rad (\ker f)$. 

To see the converse, note that $\fr + \ker f = \fg$, hence $\fr + \rad (\ker f)$ is an ideal of $\fg$.
Moreover, $\fr + \rad (\ker f)$ is solvable.  
Therefore, $\rad(\ker f) \subset \fr \cap \ker f = [ \fr,\fr]$, where the last equality follows from~(b). 
\end{proof}

In view of the claim, if $\fh$ is a Levi subalgebra of $\ker f$ with $\mathfrak l\subset\fh$, then 
\be\label{eq:levi-kerf}
\fg = \fr + \ker f = \fr + ([\fr,\fr] \oplus \fh) = \fr \oplus \fh.
\ee
That is: $\fh$ is a Levi subalgebra of $\fg$ and $\mathfrak l\subset\mathfrak h$. 

The strategy now is to find some $f\in E$ with $\height(\ker(f))\ll T^\star$.
Then the above observation and inductive hypothesis will yield the desired Levi subalgebra. 

We now turn to the details. 
First note that in view of (a), (b) and (c), we have that
$E$ is the set of solutions $f \in {\rm End}(\fg,\fr/ [\fr,\fr])$ of the {\em inhomogeneous} system
\[
\begin{cases}
f(u_i)=0&\quad i=1,\ldots,m'\\
f(u_i) = u_i +[\fr,\fr] &\quad i=m'+1, \dots, m \\
f(\hat u_i)=0&\quad i=1,\dots,l\\
f([u_i, u_j]) = [u_i +[\fr,\fr] ,f(u_j)] + [f(u_i),u_j +[\fr,\fr] ] &\quad i,j = 1, \dots, M.
\end{cases}
\]
In view of~\eqref{eq:str-cont} and~\eqref{eq:uhat-u} we have the following. 
When $f$ is written in the basis of ${\rm End}(\fg,\fr/ [\fr,\fr])$ associated to $\mathcal B$ and $\mathcal D$,
the above system becomes a linear system whose coefficients are integers bounded in absolute value by $\ll T^\star$.  

Since $E$ is not empty, after perhaps removing redundant equations, 
we may apply {\em Siegel's lemma for inhomogeneous equations} in \S\ref{inSiegel} and get the following. 
There is a solution $f$ whose matrix in the bases $\mathcal B$ and $\mathcal D$ 
has rational entries, with numerator and common denominator $c\ll T^\star$. 
Put $f' = c f$, so that the matrix of $f'$ in the bases $\mathcal B$ and $\mathcal D$ has integer coefficients of size $\ll T^\star$. 

At last, another application of {\em Siegel's lemma} and {\em extracting small $\Z$-bases lemma} in \S\ref{inSiegel} to $f'$
yields that $\ker f \cap \fsl_N(\Z)$ has a $\Z$-basis $\{w_1, \dots, w_{n'} \}$ satisfying 
\[
\| w_i \| \ll T^\star\quad 1\leq i\leq n'
\] 

Recall from the claim that $\mathfrak l\subset\ker f$, $\rad(\ker(f))=[\fr,\fr]$, and $\ell_{\rm d}([\fr,\fr])<\ell_{\rm d}(\fr)$. 
Hence by the inductive hypothesis, $\ker f$ has a Levi subalgebra, $\fh$, with $\height(\fh)\ll T^\star$.

In view of~\eqref{eq:levi-kerf}, this finishes the proof of {\em Step 3} and the proposition.
\qed

\section{Consequences of effective Levi decomposition}\label{sec:eff-levi}
Recall from \S\ref{sec:data} that we fixed the following.
\begin{enumerate}
\item A connected, simply connected, algebraic $\bbq$-group $\G$
whose solvable radical is unipotent, i.e.,~$\rad(\G)=\rad_u(\G)=:\bR$.
\item An algebraic homomorphism $\iota: \G \rightarrow \SL_N$ 
defined over $\bbq$ with a finite central kernel.
\end{enumerate}

Also recall that $\mu_Y$ (or simply $\mu$) denotes the $G=\iota(\G(\adele))$-invariant probability measure on $Y=\iota(\G(\adele)/\G(F))$. 
Let $m_G$ (or simply $m$) be a Haar measure on $G$ which projects to $\mu$ under the orbit map. 

In this section, we will use the results from \S\ref{sec:small-levi} to find a {\em good} Levi decomposition for $\iota(\G)$.
Then we will relate the notion of height of $Y$ (see \S \ref{sec:volume}, \S\ref{sec:height}) to the heights of orbits similarly defined using the radical and our fixed Levi subgroup.  

\subsection{Finding a good Levi subgroup}\label{sec:Levi-iota-G} 
Let $\fg' \subset \fsl_N(\Q)$ (resp.~$\fr'$) denote the Lie algebra of $\iota(\bG)$ (resp.~of $\iota(\bR)$). 
Set $T := \height(\fg')$. 

Let $\fh'$ be a Levi subalgebra of $\fg'$ given by proposition \ref{LiesmallLevi} applied to $\fg'$, so that $\height(\fh')\ll T^\star$. 

Let $\bH'$ be the subgroup of $\iota(\G)$ with $\Lie(\bH')=\fh'$. Then $\bH'$ is a Levi subgroup of $\iota(\G)$,
and we have $\iota(\G)=\bH'\iota(\bR)$. 

We now discuss similar decompositions over $\Q$ and also $\A$. 
First note that, since $\bR = \rad_u(\bG)$, $\bH'$ is semisimple (not just reductive), and we have $\bH'(\A) \cap \iota(\bR)(\A) = \{1\}$. 

Set $\bH = \iota^{-1}(\bH')$. Since $\iota$ has finite central kernel, $\bH$ is a semisimple $\Q$-subgroup of $\bG$ isogenous to $\bH'$; thus $\bH$ is a Levi $\Q$-subgroup of $\bG$. Moreover, $\bG(\Q) = \bH(\Q) \bR(\Q)$. Indeed, in the exact sequence 
\[
1 \rightarrow\bR(\Q) \rightarrow \bG(\Q) \rightarrow\bH(\Q) \rightarrow\rH^1(\Q, \bR)
\]
associated to the quotient $\bG / \bR \cong \bH$, the term $\rH^1(\Q, \bR)$ vanishes because $\bR$ is unipotent. 
Hence $\bG(\Q) \to \bH(\Q)$ is onto. 

The same argument applied to the group $\iota(\G)$ shows that $\iota(\bG)(\Q) = \iota(\bH)(\Q) \iota(\bR)(\Q)$.

The above also implies that
\[
\bG(\A) = \bH(\A) \bR(\A).
\] 
Indeed, since $\bG(\Q) = \bH(\Q) \bR(\Q)$, 
the embedding $\bH \to \bG$ is a section defined over $\Q$ of the quotient map $\bG \to \bH$. 
Hence $\bG(\A) \to \bH(\A)$ is surjective, see \cite[\S1.2]{Weil82}, and 
we get
$\bG(\A) = \bH(\A)\bR(\A)$ as was claimed. 

Applying $\iota$, this yields 
$
\iota(\bG(\A)) = \iota(\bH(\A)) \iota(\bR(\A)).
$

\subsection{Product structure of $Y,$ $\mu_Y,$ and $m_G$} \label{sec:prod-str}
Let $\hat{\rm pr}_{\bH}:\G\to\bH$ be the map which is induced from the natural projection $\G\to\G/\bR$.
More explicitly, given $g\in \G$, we have the unique decomposition 
\[
g=g_\bH g_\bR\quad\text{where } g_\bH\in\bH\text{ and }g_\bR\in\bR;
\] 
then $\hat{\rm pr}_{\bH}(g)=g_{\bH}$. 

Let ${\rm pr}_H:G\to H := \iota(\bH(\A))$ be the induced map, given by ${\rm pr}_H(g)=g_H$, where $g=g_Hg_R\in\iota(\bH(\A))\iota(\bR(\A))$.  

Put $Y_H :=\iota(\bH(\A)/\bH(\Q))$. The map ${\rm pr}_H$ 
induces a map $Y\to Y_H$ given by $\iota(g)\SL_N(\Q) \mapsto \iota(g_\bH) \SL_N(\Q)$ for $g \in \bG(\A)$.
To see this, suppose $\iota(g_1^{-1}g_2) \in \SL_N(\Q)$ for some $g_1, g_2 \in \bG(\A)$. 
Then $\iota(g_1^{-1}g_2) \in \iota(\bG)(\Q)=\iota(\bH)(\Q)\iota(\bR)(\Q)$, hence 
\[
\iota({({g_1})_\bH}^{-1} (g_2)_\bH) = \iota((g_1^{-1}g_2)_\bH) \in \iota(\bH)(\Q) \subset \SL_N(\Q).
\]
We continue to denote the map so induced from $Y$ to $Y_H$ by ${\rm pr}_H$.

Put $R := \iota(\bR(\A))$ and $Y_R :=\iota(\bR(\A)/\bR(\Q))$;
we have a fibration
\[
\begin{tikzcd}
Y_R \arrow[hookrightarrow]{r} & Y \arrow[d,"{\rm pr}_H"] \\
& Y_H
\end{tikzcd}
\]
The fiber over $\iota(h)\SL_N(\Q) \in Y_H$ is ${\rm pr}_H^{-1}(\iota(h)) = \iota(h) \iota(\bR(\A)) \SL_N(\Q)$, 
the translate of $Y_R$ by $\iota(h)$. 

Let $\mu_R$ (resp.~$\mu_H$) be a $R$-invariant (resp.~$H$-invariant) probability measure on $Y_R$ (resp.~$Y_H$). Let $\hat\mu$ be the measure on $Y$ defined by
\[
\int_{Y} f \operatorname{d}\!\hat\mu = \int_{Y_H} \left( \int_{Y_R} f(hr\SL_N(\Q)) \operatorname{d}\!\mu_R(r) \right) \operatorname{d}\!\mu_H(h).
\]

Since $\bH$ is semisimple, the modulus of the action of $H$ on $\mu_R$ is trivial. Thus $\hat\mu$ is a $G$-invariant probability measure on $Y_G$; that is: $\hat\mu = \mu_Y$. 

Let $m_H$ and $m_R$ be Haar measures on $\iota(\bH(\A))$ and $\iota(\bR(\A))$ which project to $\mu_H$ and $\mu_R$, respectively.
The measure $\hat m$ on $G$ given by the product of $m_H$ and $m_R$ is a Haar measure. 
Moreover, $\hat m$ projects to the invariant probability measure $\hat \mu=\mu_Y$ on $Y$ via the orbit map.
Therefore, $m_G=\hat m$ is the product of $m_H$ and $m_R$.

\begin{lemma}\label{lem:ht-vol-unip}
There exists some $\consta\label{k:ht-vol}$ so that
\[
\vol(Y_R)^{\ref{k:ht-vol}}\ll\height(\bR)\ll \vol(Y_R). 
\]
\end{lemma}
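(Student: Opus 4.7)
The plan is to reduce the computation of $\vol(Y_R)$ to an archimedean volume on the Lie algebra $\mathfrak{r}':=\Lie(\iota(\bR))$, using that $\iota(\bR)$ is unipotent. Two facts will be key: (i) $\exp:\mathfrak{r}'\to\iota(\bR)$ is a $\Q$-polynomial isomorphism of algebraic varieties which preserves Haar measures, since the Jacobian $\det\!\bigl(\frac{1-e^{-\ad X}}{\ad X}\bigr)$ equals $1$ for nilpotent $\ad X$; and (ii) $\mathfrak{r}'$ viewed as a $\Q$-vector space has Tamagawa number $1$, so the Haar measure on $\mathfrak{r}'(\A)$ whose fundamental domain for $\mathfrak{r}'(\Q)$ has volume $1$ is the standard product measure in any $\Q$-basis.

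First, fix a $\Z$-basis $v_1,\dots,v_d$ of $\mathfrak{r}'(\Z):=\mathfrak{r}'(\Q)\cap\fsl_N(\Z)$ (with $d=\dim\bR$). Via Minkowski's second theorem applied to the lattice $\mathfrak{r}'(\Z)\subset\mathfrak{r}'(\R)$, whose covolume (for the $\ell^2$-inner product inherited from $\fsl_N(\R)$) is $\asymp_N\height(\bR)$, we may assume the basis is reduced: $\prod_i\|v_i\|_\infty\ll\height(\bR)$, hence $\max_i\|v_i\|_\infty\ll\height(\bR)$. The parameterization $\phi:\A^d\to\iota(\bR)(\A)$, $\phi(t)=\exp\bigl(\sum_i t_iv_i\bigr)$, then pushes the standard Haar measure on $\A^d$ to the unique Haar measure $m_R$ on $\iota(\bR)(\A)$ projecting to a probability measure on $Y_R$.

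Second, compute $m_R(R\cap\Omega)$ place by place. At infinity, $\phi^{-1}(R_\infty\cap\Xi_{\eta_0})=S_\infty:=\{t\in\R^d:\|\sum_i t_iv_i\|_\infty\le\eta_0\}$. The inclusion $\{t:|t_i|\le\eta_0/(d\max_j\|v_j\|_\infty)\}\subset S_\infty$ immediately gives $\vol(S_\infty)\gg\eta_0^d/\height(\bR)^d$. For the reverse direction, select a $d\times d$ submatrix $A$ of the matrix with rows $v_i$ whose determinant realizes (up to $\asymp_N$) the max-norm coordinate of $v_1\wedge\cdots\wedge v_d$, so that $|\det A|\asymp_N\height(\bR)$; then $t\mapsto t^\top A$ embeds $S_\infty$ into the cube $\{u\in\R^d:\|u\|_\infty\le\eta_0\}$, yielding $\vol(S_\infty)\ll_N\eta_0^d/\height(\bR)$. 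At a finite place $p>N$, the Taylor expansions of $\exp$ and $\log$ have $\Z_p$-coefficients, so $\exp$ identifies $\mathfrak{r}'(\Q_p)\cap\fsl_N(\Z_p)$ with $R_p\cap\SL_N(\Z_p)$; because $\mathfrak{r}'(\Z)$ is saturated in $\fsl_N(\Z)$, this set equals $\bigoplus_i\Z_pv_i$, of $m_{R,p}$-measure exactly $1$. For the finitely many $p\le N$, the same scheme applied to $\exp$ restricted to $p^{M(N)}\fsl_N(\Z_p)$ (with $M(N)$ chosen to tame the denominators $k!$ for $k\le N-1$) shows $m_{R,p}(R_p\cap\SL_N(\Z_p))\asymp_N 1$.

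Multiplying the local contributions yields $m_R(R\cap\Omega)\asymp_N\vol(S_\infty)$, so $\eta_0^d/\height(\bR)^d\ll_N m_R(R\cap\Omega)\ll_N\eta_0^d/\height(\bR)$. Taking reciprocals gives $\height(\bR)\ll\vol(Y_R)\ll\height(\bR)^d$, and the lemma follows with any $\ref{k:ht-vol}\le 1/d$ (e.g., $\ref{k:ht-vol}=1/N^2$). The main subtlety is controlling the $p$-adic integral behavior of $\exp$ at the small primes $p\le N$, where the denominators in the exponential series are not $p$-adic units; this is handled by restricting to a deep enough congruence sublattice of $\fsl_N(\Z_p)$, at the cost of an $N$-dependent multiplicative constant that we can afford.
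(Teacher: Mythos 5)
Your proof is correct, but it normalizes the Haar measure by a different mechanism than the paper, so it is worth comparing the two. The paper also works in exponential coordinates attached to a $\Z$-basis of $\mathfrak r'\cap\tfrac1D\fsl_N(\Z)$ and uses $\|v_1\wedge\cdots\wedge v_d\|\asymp\height(\bR)$, but it exploits the normalization $\mu_R(Y_R)=1$ \emph{softly}: for the bound $\height(\bR)\ll\vol(Y_R)$ it exhibits a small product set $F_\delta$ that injects into $Y_R$ (hence has $m_R$-measure at most $1$), and for the reverse bound it proves a reduction statement for $\bR(\A)$ (every class has a representative integral at all finite places and of archimedean size $\ll\height(\bR)^{\star}$) and covers that region by $\height(\bR)^{\star}$ translates of $\Omega\cap R$. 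You instead pin down $m_R$ \emph{exactly} as $\exp_*$ of the product measure on $\A^d$ in a Minkowski-reduced basis and then evaluate $m_R(R\cap\Omega)$ place by place, pinching the archimedean slab between an inscribed cube and a coordinate projection of determinant $\height(\bR)$. Your route gives cleaner, explicit exponents (in particular $\vol(Y_R)^{1/\dim\bR}\ll\height(\bR)$), at the cost of invoking the Tamagawa-number machinery; the paper's route is softer and reuses its unipotent reduction step (Lemma~\ref{lem:unip-case}) elsewhere in the argument.

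One step you should make explicit: the identity $\phi_*(\mathrm{std})=m_R$ does not follow formally from your facts (i) and (ii). Measure-preservation of $\exp$ plus covolume one for the \emph{vector} group does not by itself give covolume one for $\iota(\bR)(\Q)$ in $\iota(\bR)(\A)$, because $\exp$ is not a group homomorphism when $\bR$ is non-abelian and need not carry a fundamental domain for $\mathfrak r'(\Q)$ to one for $\iota(\bR)(\Q)$. What you are really using is the classical theorem that unipotent groups over $\Q$ have Tamagawa number one (proved by induction on the descending central series, the vector-group case being the base). This is standard and citable, so it is not a gap, but as written the deduction skips that induction. The remaining ingredients (Minkowski's second theorem, the determinant-$1$ Jacobian of $\exp$, the integrality of $\exp$ and $\log$ at primes $p>N$ and the congruence fix at $p\leq N$) are all handled correctly.
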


\begin{proof}
%Let us first show that $\height(\bR)\ll \vol(Y_R)$. 
Recall that $\iota$ is an isomorphism on $\bR$.
For every prime $p$, put 
\[
C_p=\iota^{-1}(\bR'(\Q_p)\cap\SL_N(\Z_p));
\]
$C_p$ is a compact open subgroup of $\bR(\Q_p)$. 
By the strong approximation theorem for unipotent groups, we have
\[
\bR(\A)=\bigl(\bR(\bbr)\times\textstyle\prod_{p}C_p\bigr)\bR(\Q).
\] 
In other words, for every $g\in \bR(\A)$ there exists some $\gamma_0\in\bR(\Q)$ so that 
\[
g\gamma_0=(\hat g_\infty,(\hat g_p))\in\bR(\bbr)\times\textstyle\prod_{p}C_p.
\]

Recall now that $\log(\iota(\bR(\bbr))\cap\SL_N(\Z))\subset (\mathfrak r'\cap\frac{1}{D}\fsl_N(\Z))$
for some integer $D$ depending only on $N$.
Let $\{v_1,\ldots, v_n\}$ be a $\Z$-basis for $\mathfrak r'\cap\frac{1}{D}\fsl_N(\Z)$. 
For every $\delta>0$, put 
\[
F_\delta :=\{h_\infty \in \iota(\bR(\R)):h_{\infty}=\exp(\sum c_iv_i), |c_i|<\delta\}\} \times \prod_{p} \iota(C_p).
\]
Note that, 
\[
F_\delta\cap\SL_N(\Q)\subset\SL_N(\Z);
\] 
therefore, in view of the choice of $D$, for small enough $\delta\ll1$
we have $F_\delta\cap\SL_N(\Q)=\{e\}$.

Also, note that $F_\delta=F_\delta^{-1}$; and if $\delta\ll1$ is small enough, 
$hh'\in F_{\star\delta}$ for any $h,h'\in F_\delta$.
Altogether, we get that $F_\delta$ injects into $Y_R$ for all small enough $\delta\ll1$. 

Recall that $m_R$ is a Haar measure on $\iota(\bR(\A))$ normalized so that $\mu_R(Y_R)=1$;
also recall that $\Omega=\exp(B_{\fsl_N(\R)}(\eta_0))\times \prod_p\SL_N(\Z_p)$. Therefore, 
\[
m_R(\iota(\bR(\A))\cap\Omega)\ll \|v_1\wedge\dots\wedge v_n\|^{-1}.
\]
Since $\|v_1\wedge\dots\wedge v_n\|\asymp\height(\bR)$, we get from the above that 
\[
\height(\bR)\ll \vol(Y_R)=m_R(\iota(\bR(\A))\cap\Omega)^{-1}.
\]

To see the other inequality, let $g\in \bR(\A)$. Let $\gamma_0\in\bR(\Q)$ be so that 
\[
g\gamma_0=(\hat g_\infty,(\hat g_p))\in\bR(\bbr)\times\textstyle\prod_{p}C_p.
\]
There exists some $\hat\gamma_1\in\iota^{-1}(\exp(\fr' \cap N!\fsl_N(\Z)))$ 
so that 
\[
|\iota(\hat g_\infty\hat\gamma_1)|\ll \height(\bR)^{\kappa}
\] 
for some $\kappa$ independent of $g$. Note that $\iota(\hat\gamma_1) \in \iota(\bR(\R)) \cap \SL_N(\Z)$, hence $\hat\gamma_1 \in \bR(\Q)$. 

Let $\gamma_1$ be the diagonal embedding of $\hat\gamma_1$ in $\bR(\A)$. Then 
since $\iota(\hat\gamma_1)\in \SL_N(\Z)$, we get that 
\[
g\gamma_0\gamma_1=(\hat g_\infty,(\hat g_p))\gamma_1=(\hat g_\infty\hat\gamma_1, (\tilde g_p))\in\bR(\bbr)\times\textstyle\prod_{p}C_p. 
\] 
Since we can cover $\{g\in \bR(\bbr): |\iota(g)|\ll \height(\bR)^{\kappa}\}$ with $\ll \height(\bR)^\star$ translates of 
$\iota(\bR(\bbr))\cap \exp(B_{\fsl_N(\R)}(\eta_0))$, we get that 
\[
m_R(\iota(\bR(\A))\cap\Omega)\gg \height(\bR)^{-\star}.
\]
Therefore, $\vol(Y_R)^\star\ll \height(\bR)$; the proof is complete.
\end{proof}

\begin{lemma}\label{lem:height-g-g-H}
There exist $\consta\label{k:ht-g-gH-1}$ so that the following holds.
For any $g\in G$ we have
\[
\height(\G)^{-\ref{k:ht-g-gH-1}}\height(g)^{\ref{k:ht-g-gH-1}}\ll\height({\rm pr}_H(g))\ll\height(\G)^{\ref{k:ht-g-gH-1}}\height(g)^{\ref{k:ht-g-gH-1}}.
\]
\end{lemma}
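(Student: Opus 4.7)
The plan is to reduce the problem to a comparison of injectivity radii via Lemma~\ref{lem:ht-injr}, after normalizing the unipotent component of $g$ using the effective reduction theory for the unipotent radical from Lemma~\ref{lem:ht-vol-unip} and Proposition~\ref{prop:LiesmallLevi}.

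First, I would normalize $g$ by right multiplication by $\iota(\bG(\Q))$, which preserves both $\height(g)$ and $\height(g_H)$. Using strong approximation for the simply connected group $\bG$, I may assume $\iota(g)_p \in \SL_N(\Z_p)$ at every finite place~$p$. Decomposing $g = g_H g_R$ as in \S\ref{sec:prod-str}, the argument from the proof of Lemma~\ref{lem:ht-vol-unip}—combined with the bound $\height(\iota(\bR)) \ll \height(\bG)^\star$ obtained from Proposition~\ref{prop:LiesmallLevi} applied to $\fg' = \Lie(\iota(\bG))$ as in \S\ref{sec:Levi-iota-G}—produces $\gamma \in \bR(\Q)$ so that, after replacing $g$ by $g\iota(\gamma)$, one has $|\iota(g_R)_\infty|_\infty \ll \height(\bG)^\star$ and $\iota(g_R)_p \in \SL_N(\Z_p)$ at every finite $p$. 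These imply the same integrality for $\iota(g_H)_p$ at finite places.

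Next, I compare the injectivity radii of $g$ and $g_H$ via the conjugation identity
\[
g_\infty^{-1}\, \gamma\, g_\infty \;=\; g_{R,\infty}^{-1}\, (g_{H,\infty}^{-1} \gamma\, g_{H,\infty})\, g_{R,\infty}.
\]
If $g \in X_\eta$, I take $\gamma \in g_H^{-1} \Omega_{\eta'} g_H \cap \SL_N(\Q)$; the finite-place integrality forces $\gamma \in \SL_N(\Z)$, and the archimedean component satisfies $g_{H,\infty}^{-1}\gamma g_{H,\infty} = \exp(X)$ with $\|X\|_\infty \leq \eta'$. Applying $\Ad(g_{R,\infty}^{-1})$ yields $g_\infty^{-1}\gamma g_\infty = \exp(\Ad(g_{R,\infty}^{-1})X)$ with $\|\Ad(g_{R,\infty}^{-1})X\|_\infty \leq |g_{R,\infty}|^2 \eta' \ll \height(\bG)^\star \eta'$, while its finite components lie in $\SL_N(\Z_p)$. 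Choosing $\eta' \asymp \eta \,\height(\bG)^{-\star}$ places $g^{-1}\gamma g$ inside $\Omega_\eta$, forcing $\gamma = I$ by $g \in X_\eta$, so that $g_H \in X_{\eta\, \height(\bG)^{-\star}}$.

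Applying Lemma~\ref{lem:ht-injr} to convert between injectivity radius and height, this implication yields $\height(g_H) \ll \height(\bG)^\star \height(g)^\star$. The reverse inequality follows from a completely symmetric argument, conjugating instead by $g_{R,\infty}$ (rather than $g_{R,\infty}^{-1}$) to relate $g_\infty^{-1}\gamma g_\infty$ to $g_{H,\infty}^{-1}\gamma g_{H,\infty}$, using the same bound on $|g_{R,\infty}|$. The main technical point is identifying this correct conjugation identity: the naive approach of comparing $g_H \gamma g_H^{-1}$ and $g\gamma g^{-1}$ via $g\gamma g^{-1} = g_H (g_R\gamma g_R^{-1}) g_H^{-1}$ fails, as it requires conjugating the possibly-large rational $\gamma$ by the bounded $g_R$ (producing distortion proportional to $|g_H|^2$). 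Conjugating the \emph{already-small} element $g_H^{-1}\gamma g_H$ by $g_R^{-1}$ is what enables the polynomial bound in $\height(\bG)$ alone.
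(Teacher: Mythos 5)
Your overall strategy (normalize $g_R$ by $\bR(\Q)$ so that it is integral at finite places and of size $\height(\G)^\star$ at $\infty$, then transfer injectivity radii between $g$ and $g_H$ and convert via Lemma~\ref{lem:ht-injr}) is reasonable, but the pivotal step has a genuine gap. By definition, $g\SL_N(\Q)\in X_\eta$ is the condition that $\gamma\in\SL_N(\Q)$ and $g\gamma g^{-1}\in\Omega_\eta$ force $\gamma=1$ (this is the set $\gamma^{-1}g^{-1}\Omega_\eta g\gamma\cap\SL_N(\Q)$ appearing in the proof of Lemma~\ref{lem:ht-injr}); it is \emph{not} the condition on $g^{-1}\gamma g$. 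Your identity $g^{-1}\gamma g=g_R^{-1}(g_H^{-1}\gamma g_H)g_R$ therefore controls the injectivity radius at the point $g^{-1}\SL_N(\Q)$, not at $g\SL_N(\Q)$. For the quantity that Lemma~\ref{lem:ht-injr} actually ties to $\height(g)$, the relevant identity is $g\gamma g^{-1}=g_H(g_R\gamma g_R^{-1})g_H^{-1}=(g_Hg_Rg_H^{-1})(g_H\gamma g_H^{-1})(g_Hg_Rg_H^{-1})^{-1}$, and in either grouping the conjugator ($g_H$, or $g_Hg_Rg_H^{-1}$) is not bounded by $\height(\G)^\star$ --- exactly the obstruction you flag as ``the naive approach.'' So your fix does not remove the obstruction; it silently replaces $g$ by $g^{-1}$. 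The argument can in principle be repaired, since the two one-sided conditions (equivalently $\height(h)$ and $\height(h^{-1})$) are polynomially equivalent, but proving that equivalence is itself a reduction-theoretic statement of the same nature as the lemma, and you neither state nor prove it. A secondary issue: you invoke strong approximation for $\bG$ to make $\iota(g)_p$ integral at all finite places, which is not available for general $\bG$ as in \S\ref{sec:data} (e.g.\ when the semisimple part is anisotropic at $\infty$ or the class number exceeds one); fortunately only the normalization of $g_R$ is needed for your conjugation step, and that uses only strong approximation for the unipotent group $\bR$, which always holds.

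For comparison, the paper avoids the injectivity-radius detour altogether. It attaches to $\bR'=\iota(\bR)$ its Borel--Tits parabolic $\bf P$, conjugates $\iota(\G)$ into a standard block-upper-triangular parabolic ${\bf P}_0$ by a rational $\delta$ whose entries have height $\ll\height(\bR)^\star$, and then runs reduction theory inside ${\bf Q}_0=[{\bf P}_0,{\bf P}_0]\rad_u({\bf P}_0)$: since $\delta\bR'\delta^{-1}\subset\rad_u({\bf P}_0)$, the component $g_R$ can be absorbed entirely into the unipotent part $u$ of the Iwasawa decomposition without altering the diagonal part $a$, so $\height(\hat g)$ and $\height(\hat g_H)$ are both $\asymp|a|^{\pm\star}$, and undoing the conjugation by $\delta$ costs only $\height(\G)^\star$. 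If you want to salvage your approach, the cleanest route is probably to prove the symmetry $\height(h)\asymp\height(h^{-1})^{\pm\star}$ first (e.g.\ via the remark in \S 2.3 comparing $\height$ with $\widetilde{\height}$ and the self-duality of the adjoint representation), and to drop the appeal to strong approximation for $\bG$.
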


\begin{proof}
Recall our notation $\G'=\iota(\G)$ 
and the Levi subgroup $\bH'$ of $\G'$ from \S\ref{sec:Levi-iota-G}. Put $\bR':=\iota(\bR)=\rad_u(\G')$. 
If $\bR'=\{1\}$, then $\bR=\{1\}$ and there is nothing to prove. 
Therefore, let us assume that $\bR'$ is a nontrivial unipotent $\Q$-subgroup of $\SL_N$.

Let ${\bf P}\subset\SL_N$ be the parabolic subgroup associated to $\bR'$ as in~\cite{BT}.
That is: ${\bf U}_0=\bR'$ and 
${\bf U}_i$ is defined inductively by $\rad_u(\rN_{\SL_N}({\bf U}_{i-1}))$. 
Then ${\bf U}_i\subset \rN_{\SL_N}({\bf U}_{i-1})$ and ${\bf U}_{i-1}\subset{\bf U}_i$.
This process terminates after $d\leq N^2$ steps
and gives rise to a parabolic subgroup, $\bf P$, with the following properties. 
\begin{enumerate}
\item $\height({\bf P})\ll \height(\bR)^\star$.
\item $\bR'\subset \rad_u({\bf P})=:{\bf W}$.
\item $\rN_{\SL_N}(\bR')\subset{\bf P}$.
\end{enumerate}
In view of~(1) and Proposition~\ref{LiesmallLevi}, we have $\height({\bf W})\ll \height({\bf P})^\star\ll\height({\bR})^\star$.
Moreover, by (3) we have $\G'\subset{\bf P}$.

Let $\mathcal F_{\bf P}$ denote the flag defined by ${\bf W}$ as follows.
Let $\mathcal V_0=\Q^N$, and for any $m>0$, let 
\[
\mathcal V_m=\text{$\Q$-span}\{w_1\ldots w_m v: v\in\Q^N, w_i\in\Lie({\bf W})\}.
\]
Then $\{\mathcal V_m\}$ forms a descending chain of subspaces of $\Q^N$; 
let $M\leq N$ be so that $\mathcal V_{M}\neq0$ but $\mathcal V_{M+1}=0$.
Further, note that $\height(\mathcal V_m)\ll\height(\bR)^\star$ for each $0\leq m\leq M$.

There exists some $\delta=(\tfrac{a_{ij}}{b_{ij}})\in\SL_N(\Q)$ with 
$|a_{ij}|,|b_{ij}|\ll \height(\bR)^\star$ so that $\delta\mathcal F_{\bf P}=\mathcal F_0$ where $\mathcal F_0$
is a standard flag, i.e., $\mathcal F_0$ is a flag corresponding to a block upper triangular parabolic subgroup ${\bf P}_0$.   
One could construct one such $\delta$ as follows: for each $i\geq0$ let $\mathcal V'_{i}$ be a complement of
$\mathcal V_{M+1-i}$ in $\mathcal V_{M-i}$ (in particular, $\mathcal V'_0=\mathcal V_M$), chosen so that
$\height(\mathcal V'_i)\ll\height(\bR)^\star$ for all $i$.

Let us put ${\bf Q}_0=[{\bf P}_0,{\bf P}_0]\rad_u({\bf P}_0)$. 
The group $\rad_u({\bf P}_0)$ is unipotent upper triangular and since $\delta{\bf W}\delta^{-1}\subset\rad_u({\bf P}_0)$, 
we have $\delta\bR'\delta^{-1}\subset \rad_u({\bf P}_0)$. 
Further, $\delta\bH'\delta^{-1}\subset {\bf Q}_0$ since $\bH'$ is perfect and normalizes $\bR'$.

Let $g\in G\subset \G'(\A)$; write $g=g_Hg_R$ where $g_H\in\iota(\bH(\A))$ and $g_R\in\iota(\bR(\A))$ ---
recall that ${\rm pr}_H(g)=g_H$.
We will use the reduction theory of $\SL_d$ to compute an Iwasawa decomposition for 
representatives of $\hat g_H:=\delta g_H\delta^{-1}$ and $\hat g:=\delta g\delta^{-1}$ in a Siegel fundamental domain. 

Decompose $\hat g_H$ as a product of a block-diagonal matrix in ${\bf Q}_0$ and an element in 
$\rad_u({\bf P}_0)$. Then using the reduction theory of $\SL_d$ for each block matrix and the fact that
$\rad_u({\bf P}_0)$ is normal subgroup of ${\bf Q}_0$, we have the following. 
There exists some $\gamma_0\in{\bf Q}_0(\Q)$ so that 
\be\label{eq:gH-iwasawa}
\hat g_H\gamma_0=(k a u,(g'_{H,p}))\in \SL_N(\R)\times\bigl(\textstyle\prod_p\SL_N(\Z_p)\bigr)
\ee
where $k\in{\SO}_N(\R)$, $a={\rm diag}(a_i)$ is diagonal with
$a_ia_{i+1}^{-1}\leq 2/\sqrt3$, and $u=(u_{ij})$ is unipotent upper triangular with $|u_{ij}|\leq 1/2$ 
(see also the proof of Lemma~\ref{lem:ht-injr}).

Let us write $\hat g_R=\delta g_R\delta^{-1} \in \rad_u({\bf P}_0)(\A)$. 
Let $\gamma_1 \in \SL_N(\Q)$ be unipotent upper triangular, such that 
\[
(u,(e)) \gamma_0^{-1}\hat g_R\gamma_0\gamma_1=(u', (u'_p))
\] 
with $u'=(u'_{ij})$ and $|u'_{ij}|\leq 1/2$, and $u'_p\in \SL_N(\Z_p)$. 
This, in view of~\eqref{eq:gH-iwasawa}, gives
\begin{align}
\hat g \gamma=\notag \hat g_H\hat g_R \gamma &= \hat g_H\gamma_0(\gamma_0^{-1}\hat g_R\gamma_0\gamma_1) = (ka, (g'_{H,p})) (u,(e)) \gamma_0^{-1}\hat g_R\gamma_0\gamma_1\\
\label{eq:gH-iwasawa-1} &=( k a u',(g'_{H,p} u'_p))\in \SL_N(\R)\times\bigl(\textstyle\prod_p\SL_N(\Z_p)\bigr)
\end{align}
where $\gamma=\gamma_0\gamma_1$.

As was discussed in the proof of Lemma~\ref{lem:ht-injr}, the decompositions in~\eqref{eq:gH-iwasawa} and~\eqref{eq:gH-iwasawa-1} 
imply that
\[
|a|^\star\ll \height(\hat g_H)\ll |a|^\star\quad\text{and}\quad|a|^\star\ll \height(\hat g)\ll |a|^\star.
\]
Recall now that $g=\delta^{-1}\hat g\delta$ and $g_H=\delta^{-1}\hat g_H\delta$ 
where $\delta=(\tfrac{a_{ij}}{b_{ij}})\in\SL_N(\Q)$ with $|a_{ij}|,|b_{ij}|\ll \height(\bR)^\star\ll\height(\G)^\star$.
The claim thus follows. 
\end{proof}

\begin{propo}\label{prop:vol-comp}
There exist $\consta\label{k:col-comp-exp}$ and $\consta\label{k:col-comp-exp-up}$ with the following property. 
\begin{enumerate}
\item 
$
\bigl(\vol(Y_H) \vol(Y_R)\bigr)^{\ref{k:col-comp-exp}} \ll \disc(Y) \ll \bigl(\vol(Y_H) \vol(Y_R)\bigr)^{\ref{k:col-comp-exp-up}}. 
$
\medskip
\item
$
\bigl(\disc(Y_H) \disc(Y_R)\bigr)^{\ref{k:col-comp-exp}} \ll \disc(Y) \ll \bigl(\disc(Y_H) \disc(Y_R)\bigr)^{\ref{k:col-comp-exp-up}}. 
$
\end{enumerate}
\end{propo}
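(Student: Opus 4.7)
The plan is to compare the two ingredients $\height(\bG)$ and $\vol(Y)$ of $\disc(Y)$ with their $\bH$- and $\bR$-counterparts separately, then recombine. Three inputs do all the work: the product Haar-measure decomposition $m_G = m_H \otimes m_R$ from \S\ref{sec:prod-str}, the volume/height comparison for unipotent groups (Lemma \ref{lem:ht-vol-unip}), and the effective Levi decomposition (Proposition \ref{prop:LiesmallLevi}).

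\medskip

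\textit{Step 1 (height comparison).} I would first show that $\height(\bG) \asymp \bigl(\height(\bH)\height(\bR)\bigr)^{\pm\star}$. The inequality $\height(\bG) \ll \height(\bH)\height(\bR)$ is immediate by concatenating small $\Z$-bases of $\hfrak \cap \fsl_N(\Z)$ and $\rfrak \cap \fsl_N(\Z)$ into a $\Q$-basis of $\gfrak$ and bounding the wedge via Lemma \ref{smallZbasis}. For the converse, the radical is intrinsically defined by Killing-form equations whose coefficients are $\ll \height(\bG)^\star$ (this is exactly Step 1 of the proof of Proposition \ref{prop:LiesmallLevi}), so $\height(\bR) \ll \height(\bG)^\star$; and the conclusion of Proposition \ref{prop:LiesmallLevi} applied to $\Lie(\iota(\bG))$ yields $\height(\bH) \ll \height(\bG)^\star$.

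\medskip

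\textit{Step 2 (volume comparison).} I would then show $\vol(Y) \asymp \height(\bG)^{\pm \star} \vol(Y_H) \vol(Y_R)$. Using $m_G = m_H \otimes m_R$ and Fubini on $\Omega = \Xi_{\eta_0} \times \prod_p \SL_N(\Z_p)$, everything reduces to the following two-sided containment: there is $c \ll \height(\bG)^\star$ with
\[
(H \cap \Omega_{\eta_0/c})\cdot (R \cap \Omega_{\eta_0/c}) \;\subset\; G \cap \Omega \;\subset\; (H \cap \Omega_c)\cdot (R \cap \Omega_c).
\]
At the archimedean place this is a Baker--Campbell--Hausdorff argument, once one notes that the change of basis between $\gfrak$ and $\hfrak \oplus \rfrak$ has entries $\ll (\height(\bH)\height(\bR))^\star$, so the decomposition $\log g = X_H + X_R$ and its inverse are Lipschitz with polynomial constants in $\height(\bG)$. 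At finite places the left inclusion is free since $\SL_N(\Z_p)$ is a group; the right inclusion holds with at most a finite-index loss at the finitely many primes of bad reduction, which is absorbable into the polynomial constants. Since the archimedean Haar measure of a ball scales polynomially in its radius (with exponent $\dim \bH$ or $\dim \bR$), this yields the stated $\pm \star$-comparison.

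\medskip

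\textit{Step 3 (assembly).} Using $\max(a,b) \leq ab \leq \max(a,b)^2$ for $a,b \geq 1$, we have $\disc(Y) \asymp_\star \height(\bG)\vol(Y)$, and likewise for $\disc(Y_H)$ and $\disc(Y_R)$. Multiplying the estimates of Steps 1 and 2 gives
\[
\disc(Y) \;\ll\; \height(\bG)\vol(Y) \;\ll\; \bigl(\height(\bH)\height(\bR)\vol(Y_H)\vol(Y_R)\bigr)^\star \;\ll\; \bigl(\disc(Y_H)\disc(Y_R)\bigr)^\star,
\]
and the reverse inequality $\disc(Y) \gg (\disc(Y_H)\disc(Y_R))^{1/\star}$ follows from the two lower bounds $\disc(Y)\geq \height(\bG) \gg (\height(\bH)\height(\bR))^{1/\star}$ and $\disc(Y)\geq \vol(Y) \gg (\vol(Y_H)\vol(Y_R))^{1/\star}$; this proves part~(2). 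For part~(1), the lower bound is again immediate from $\disc(Y) \geq \vol(Y)$ and Step~2. The upper bound requires the extra input $\height(\bG) \ll (\vol(Y_H)\vol(Y_R))^\star$; via Step 1 this reduces to $\height(\bR) \ll \vol(Y_R)^\star$, which is Lemma~\ref{lem:ht-vol-unip}, and $\height(\bH) \ll \vol(Y_H)^\star$, which is the semisimple case treated in \cite[App.~B]{EMMV}.

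\medskip

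The main obstacle I anticipate is the bookkeeping in Step 2: tracking how the polynomial factor $\height(\bG)^\star$ arises from BCH at infinity and, more subtly, controlling the non-archimedean factorization $G_p \cap \SL_N(\Z_p) = (H_p \cap \SL_N(\Z_p))(R_p \cap \SL_N(\Z_p))$ at primes of bad reduction, where one must quantify the (finite-index) failure of this equality uniformly in a way that stays polynomial in $\height(\bG)$.
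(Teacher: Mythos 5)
Your overall route coincides with the paper's: part (2) is reduced to part (1) via Lemma~\ref{lem:ht-vol-unip} and \cite[App.~B]{EMMV}; the upper bound of (1) comes from the inclusion $(H\cap\Omega_\eta)\cdot(R\cap\Omega_\eta)\subset G\cap\Omega_{c\eta}$ together with the product structure $m_G=m_H\otimes m_R$ of \S\ref{sec:prod-str} and the doubling estimate of \cite[\S2.3]{EMMV}; and the lower bound comes from a reverse inclusion. Your Step 1, Step 3 assembly, and the archimedean BCH part of Step 2 all match the paper (the paper shrinks $\eta\ll\height(\G)^{-\star}$ first rather than covering $\Xi_c$ by translates of $\Xi_{\eta_0}$, but these are equivalent).

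The genuine gap is exactly the point you flag at the end and then dismiss in Step 2: the non-archimedean half of the inclusion $G\cap\Omega\subset(H\cap\Omega_c)\cdot(R\cap\Omega_c)$. Since $\Omega_c$ has the same finite part $\prod_p\SL_N(\Z_p)$ for every $c$, what is actually needed is that $G_p\cap\SL_N(\Z_p)$ sits inside the image of $(H_p\cap\SL_N(\Z_p))\times(R_p\cap\SL_N(\Z_p))$ up to a \emph{total} index loss $\prod_p J_p\ll\height(\G)^\star$. Asserting that this is a ``finite-index loss at the finitely many primes of bad reduction, absorbable into the polynomial constants'' is not a proof: both the set of bad primes and the individual indices grow with $\G$, and a priori their product could be super-polynomial in $\height(\G)$. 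The paper's proof devotes most of its length to precisely this point. It constructs the Borel--Tits parabolic ${\bf P}\supset \rN_{\SL_N}(\bR')$ with $\height({\bf P})\ll\height(\bR)^\star$, then produces $\hat\delta\in\SL_N(\Q)$ whose entries have numerators and denominators $\ll\height(\G)^\star$ conjugating $\bR'$ into $\rad_u({\bf P}_0)$ and $\bH'$ into the standard block-diagonal Levi ${\bf M}$ of a standard parabolic ${\bf P}_0$ (the latter uses that any two Levi subgroups of ${\bf Q}_0=[{\bf P}_0,{\bf P}_0]\rad_u({\bf P}_0)$ are conjugate by an element of $\rad_u({\bf P}_0)$, found with controlled denominators via Siegel's lemma). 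For the conjugated groups the factorization $\hat G_p\cap\SL_N(\Z_p)=(\hat H_p\cap\SL_N(\Z_p))(\hat R_p\cap\SL_N(\Z_p))$ holds exactly at \emph{every} prime, and undoing the conjugation costs $\prod_p[\SL_N(\Z_p):\SL_N(\Z_p)\cap\hat\delta_p^{-1}\SL_N(\Z_p)\hat\delta_p]\ll\height(\G)^\star$ because the entries of $\hat\delta$ are controlled. You would need to supply this (or an equivalent quantitative) argument to close the lower bound in Step 2; the rest of your plan is sound.
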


\begin{proof}
Recall definitions of $\vol(\cdot)$ and $\height(\cdot)$  of an algebraic data from~\eqref{eq:volume} and~\eqref{eq:def-discY}, respectively. 

We first show that part~(2) follows from part~(1). 
Indeed by Lemma~\ref{lem:ht-vol-unip}, we have $\height(\bR)\ll \vol(Y_R)$; hence, $\disc(Y_R)\asymp \vol(Y_R)$.
Moreover, by~\cite[App.~B]{EMMV} we have $\height(\bH)^\star\ll \vol(Y_H)$; hence, $\vol(Y_H)^\star\ll\height(Y_H)\ll \vol(Y_H)^\star$.

\medskip

We now turn to the proof of part~(1) in the proposition. 

\medskip

{\em The upper bound.}\ 
Because multiplication is Lipschitz (alternatively, by the Baker-Campbell-Hausdorff formula), perhaps after changing $\eta_0$, we have $\Omega_\eta \cdot \Omega_\eta \subset \Xi_{c \eta} \times \prod_{v \in \Sigma_f} \SL_N(\fo_v) = \Omega_{c \eta}$ for some $c$ depending only on $N$, hence 
\[
\left( \iota(\bH(\A)) \cap \Omega_\eta \right) \cdot \left( \iota(\bR(\A)) \cap \Omega_\eta \right) \subset \iota(\bG(\A)) \cap \Omega_{c \eta}.
\]
In view of our discussion in~\S\ref{sec:prod-str}, the measure of the left hand side is
\[
m_G \bigl( \left( \iota(\bH(\A)) \cap \Omega_\eta \right) \cdot \left( \iota(\bR(\A)) \cap \Omega_\eta \right) \bigr) = 
m_H \bigl( \iota(\bH(\A)) \cap \Omega_\eta \bigr) \cdot m_R \bigl( \iota(\bR(\A)) \cap \Omega_\eta \bigr).
\]
On the other hand, by \cite[\S2.3]{EMMV}, we have 
\[
m_G \left( \iota(\bG(\A)) \cap \Omega_{c \eta} \right) \ll_{\eta} m_G \left( \iota(\bG(\A)) \cap \Omega_{\eta} \right).
\]
Altogether, it follows that
\begin{align*}
\vol(Y) 	&\ll m_G \bigl( \iota(\bG(\A)) \cap \Omega_{c \eta_0} \bigr)^{-1} \\
			&\ll m_H \bigl( \iota(\bH(\A)) \cap \Omega_{\eta_0} \bigr)^{-1} \cdot m_R \bigl( \iota(\bR(\A)) \cap \Omega_{\eta_0} \bigr)^{-1}\\ 
			&= \vol(Y_H) \vol(Y_R). \medskip
\end{align*}

To conclude the upper bound estimate, it thus suffices to show that 
\[
\height(\G)\ll (\vol(Y_H) \vol(Y_R))^\star.
\] 
To see this first note that since $\gfrak=\fh\oplus\fr$, we have $\height(\G)\ll(\height(\bH)\height(\bR))^\star$.
Now by Lemma~\ref{lem:ht-vol-unip}, we have $\height(\bR)\ll \vol(Y_R)$. Moreover, by~\cite[App.~B]{EMMV}
we have $\height(\bH)^\star\ll \vol(Y_H)$. The claim follows.

\medskip

{\em The lower bound.}\
For the lower bound estimate, we will use notation from the proof of Lemma~\ref{lem:height-g-g-H};
in particular, $\G'=\iota(\G)$, $\bH'$ is a Levi subgroup of $\G'$ and $\bR'=\rad_u(\G')$.
Recall from the proof of Lemma~\ref{lem:height-g-g-H} that there exists some 
$\delta=(\tfrac{a_{ij}}{b_{ij}})\in\SL_N(\Q)$ with $|a_{ij}|,|b_{ij}|\ll \height(\bR)^\star$ 
and a block upper triangular parabolic subgroup ${\bf P}_0\subset \SL_N$ so that 
\[
\delta\G'\delta^{-1}\subset{\bf P}_0\quad\text{and}\quad\delta\bR'\delta^{-1}\subset\rad_u({\bf P}_0).
\]
Recall also that $\hfrak'=\Lie(\bH')$, and that ${\bf Q}_0=[{\bf P}_0, {\bf P}_0]\rad_u({\bf P}_0)$. We define ${\bf M}$ to be the block diagonal Levi subgroup of ${\bf Q}_0$.

Apply Proposition~\ref{prop:LiesmallLevi} with $\Ad(\delta)\hfrak'\subset\Lie({\bf Q}_0)$.
Therefore, there exists some Levi subgroup ${\bf M}'\subset {\bf Q}_0$ so that
\[
\delta\bH'\delta^{-1}\subset{\bf M}'\quad\text{and}\quad\height({\bf M}')\ll \height(\G)^\star.
\]

Let $\Bcal=\{v_1,\ldots, v_d\}$ be a $\Z$-basis for $\Lie(\rad_u({\bf P}_0))\cap\fsl_N(\Z)$ with $\|v_i\|\ll 1$.
Similarly, let $\mathcal C=\{w_1,\ldots, w_m\}$ (resp.\ $\mathcal C'=\{w_1',\ldots, w_m'\}$) be $\Z$-bases for
$\Lie({\bf M})\cap\fsl_N(\Z)$, (resp.\ $\Lie({\bf M}')\cap\fsl_N(\Z)$) with $\|w_i\|\ll 1$ and $\|w'_i\|\ll \height(\G)^\star$.

Recall that any two Levi subgroups of ${\bf Q}_0$ are conjugate to each other by an element in $\rad_u({\bf P}_0)$.
Writing these equations (in the Lie algebra) in the bases $\mathcal C$ and $\mathcal C'$ in terms of $\mathcal B$ we get the following. 
There exists some $u=(u_{ij})\in\rad_u({\bf P}_0)(\Q)$ with $u_{ij}=(\tfrac{c_{ij}}{d_{ij}})$ and $|c_{ij}|,|d_{ij}|\ll \height(\G)^\star$ 
so that $u{\bf M}'u^{-1}={\bf M}$.

Altogether, there exist some $\hat\delta=({\hat a_{ij}}/{\hat b_{ij}})\in\SL_N(\Q)$ with $|\hat a_{ij}|,|\hat b_{ij}|\ll \height(\G)^\star$ 
so that 
\be\label{eq:Hhat-Rhat}
\hat\delta\bH'\hat\delta^{-1}\subset{\bf M}\quad\text{and}\quad\hat\delta\bR'\hat\delta^{-1}\subset\rad_u({\bf P}_0).
\ee
Put $\hat G=\hat\delta \iota(\G(\A))\hat\delta^{-1}$, and define $\hat H$, $\hat R$ similarly. 
Having in mind our notations $G_p=\iota(\G(\Q_p))$, etc., we write similarly $\hat G_p=\hat\delta \iota(\G(\Q_p))\hat\delta^{-1}$, etc.

Let $h\in\SL_N(\Z_p)\cap {\bf Q}_0$. We can write $h=h_0h_1$ where 
$h_0\in\SL_N(\Z_p)\cap {\bf M}$ and $h_1\in\SL_N(\Z_p)\cap\rad_u({\bf P}_0)$.
In consequence, we have 
\be\label{eq:Gp=HpRp}
\hat G_p\cap\SL_n(\Z_p)=(\hat H_p\cap\SL_N(\Z_p))(\hat R_p\cap\SL_N(\Z_p))
\ee
for all primes $p$. Conjugating~\eqref{eq:Gp=HpRp} by $\hat\delta^{-1}$, we get
\[
G_p\cap\hat\delta_p^{-1}\SL_N(\Z_p)\hat\delta_p=(H_p\cap\hat\delta_p^{-1}\SL_N(\Z_p)\hat\delta_p)(R_p\cap\hat\delta_p^{-1}\SL_N(\Z_p)\hat\delta_p).
\]
In particular, the image, $I_p$, of the product map from $(H_p\cap\SL_N(\Z_p))\times (R_p\cap\SL_N(\Z_p))$
into $G_p$ contains $G_p\cap\SL_N(\Z_p)\cap \hat\delta_p^{-1}\SL_N(\Z_p)\hat\delta_p$ for all primes $p$.
Therefore, 
\be\label{eq:lowerbound-p}
m_{G_p}(I_p)\geq {m_{G_p}(G_p\cap\SL_n(\Z_p))}/{J_p}
\ee
where $J_p=[\SL_N(Z_p):\SL_N(\Z_p)\cap \hat\delta_p^{-1}\SL_N(\Z_p)\hat\delta_p]$ for all primes $p$.

Since $\hat\delta=({\hat a_{ij}}/{\hat b_{ij}})\in\SL_N(\Q)$ with $|\hat a_{ij}|,|\hat b_{ij}|\ll \height(\G)^\star$, we have
\be\label{eq:prod-Jp}
\textstyle\prod_p J_p\ll \height(\G)^\star
\ee

We also need an estimate for the real place. 
Let $0<\eta\leq \eta_0$ be a constant which will be determined in the following.
Suppose $g \in \iota(\bG(\A)) \cap \Omega_{\eta}$ and write 
$g = (g_\infty, (g_p))$. 
By definition, $g_\infty = \exp w$ for some $w \in \fg' \otimes \R$ with $\|w\| \leq \eta$. 
By Proposition \ref{LiesmallLevi} and our choice of $\fh'$, we can write $w = w_{\fh'} + w_{\fr'}$ 
with $w_{\fh'} \in \fh' \otimes \R$, $w_{\fr'} \in \fr' \otimes \R$ and $\|w_{\fh'}\|, \|w_{\fr'}\| \ll \height(\G)^{\star}\eta^{\star}$. 
We pick $\eta$ in such a way $\eta\ll\height(\G)^{-\star}$, so that the above implies 
\[
\|w_{\fh'}\|, \|w_{\fr'}\| \leq \epsilon\eta_0
\]
for some $\epsilon$ which will be specified momentarily.

Using the Baker-Campbell-Hausdorff formula and the fact that $\fr'$ is an ideal of $\fg'$, 
we see that the Levi component $(g_\infty)_{\iota(\bH(\bbr))}$ of $g_\infty$ is just $\exp(w_{\fh'})$. 
Therefore, if $\epsilon\ll1$ is chosen small enough, we get that $(g_\infty)_{\iota(\bH(\bbr))} \in \Xi_{\eta_0}$ and 
$(g_\infty)_{\iota(\bR(\bbr))}\in\Xi_{\eta_0}$.
In consequence, we we have 
\be\label{eq:lowerbound-infty}
m_{G_\infty}(G_\infty\cap\Xi_{\eta_0})\ll \height(\G)^\star m_{H_\infty}(H_\infty\cap\Xi_{\eta_0})m_{R_\infty}(R_\infty\cap\Xi_{\eta_0}).
\ee

Altogether, we have
\begin{align*}
\vol(Y_H) \vol(Y_R)&=m_H(\iota(\bH(\A))\cap\Omega)^{-1}m_H(\iota(\bR(\A))\cap\Omega)^{-1}\\
{}^{\text{\eqref{eq:lowerbound-infty}}\leadsto}&\ll\height(\G)^{\star}m_{G_\infty}(G_\infty\cap\Xi_{\eta_0})^{-1}\textstyle\prod_p\bigl(m_{G_p}(I_p)\bigr)^{-1}\\
{}^{\text{\eqref{eq:lowerbound-p}}\leadsto}&\ll \height(\G)^\star\vol(Y)\textstyle\prod_pJ_p\\
{}^{\text{\eqref{eq:prod-Jp}}\leadsto}&\ll \height(\G)^\star\vol(Y)\\
{}^{\text{\eqref{eq:def-discY}}\leadsto}&\ll \disc(Y)^\star.
\end{align*}

This implies the lower bound estimate and finishes the proof.
\end{proof}

\section{Proof of Theorem~\ref{thm:adelic-red}}\label{sec:proof}
We now combine the results from previous sections 
to complete the proof of Theorem~\ref{thm:adelic-red} --- 
the idea is to use the effective Levi decomposition of~\S\ref{sec:eff-levi} 
to reduce the problem to the case of semisimple and unipotent groups.

\subsection{Semisimple case}\label{sec:semisimple}

In the next paragraphs, we prove (a slightly finer version of) Theorem~\ref{thm:adelic-red} under the assumption that $\G$ is semisimple.
Therefore, until the end of \S \ref{sec:proof-semisimple}, $\G$ is assumed to be a connected, simply connected, semisimple group.
Under these assumptions the following was proved in~\cite{EMMV}.

\begin{propo}\label{prop:good-place}
There exists a prime $p$ and a parahoric subgroup $K_p$ of $\G(\Q_p)$ so that the following hold.
\begin{enumerate}
\item $p \ll \bigl(\log(\vol(Y))\bigr)^{2}.$
\item $\G$ is quasi-split over~$\Q_p$ and split over~$\widehat{\Q_p}$, the maximal unramified extension of $\Q_p$; 
further,~$K_p$ is a hyperspecial subgroup of~$\G(\Q_p)$.
\item Let $\Gfrak_p$ be the smooth $\Z_p$-group scheme associated to $K_p$ by Bruhat-Tits theory (see~\ref{sec:BrTits}). 
The map $\iota$ extends to a closed immersion from $\Gfrak_p$ to $\SL_N$. 
\item  There exists a homomorphism $\theta_p:\SL_2\rightarrow \Gfrak_p$ 
so that the projection of $\theta_p(\SL_2(\Q_p))$ into each $\Q_p$-almost simple factor of $\G(\Q_p)$ is nontrivial. 
\end{enumerate}

\end{propo}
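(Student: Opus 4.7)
The plan is to assemble this proposition from Prasad's volume formula \cite{Pr} and a prime-counting argument, following the strategy of \cite{EMMV}. Let $E \subset \places_f$ denote the set of finite places that are ``bad'' in at least one of the four senses appearing in (2)--(4): either $\G$ fails to be quasi-split over $\Q_p$, or no hyperspecial parahoric is available, or $\iota$ does not extend to a closed immersion of the corresponding smooth model $\Gfrak_p$ into $\SL_N$, or no homomorphism $\theta_p : \SL_2 \to \Gfrak_p$ as in (4) exists. The task is then to show that the smallest prime outside $E$ has size $\ll (\log \vol(Y))^{2}$, and that at any such prime (3) and (4) can be realized simultaneously.

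First I would dispose of the ``trivially finite'' obstructions. By \cite[Thm.~6.7]{PR}, $\G$ is quasi-split over $\Q_p$ for all but finitely many $p$; by Steinberg's theorem recalled in \S\ref{sec:BrTits}, $\G$ splits over $\widehat{\Q_p}$ at every $p$; and the immersion condition in (3) fails only for primes dividing a certain integer controlled by $\height(\G)$, hence by $\disc(Y)$. The product of the primes excluded for these reasons is therefore bounded by a fixed power of $\disc(Y)$, so their number below any threshold $X$ is $O(\log \disc(Y))$.

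The core of the argument is the hyperspecial condition. Prasad's formula expresses $\vol(Y)$ as a (Tamagawa) constant times a product of local factors $\lambda_p$; at a prime where $K_p$ is hyperspecial and $\G$ is quasi-split over $\Q_p$ the factor $\lambda_p$ is uniformly bounded, while at a non-hyperspecial or ramified prime $\lambda_p$ contains an additional multiplicative contribution bounded below by $p^{c}$ for some $c > 0$ depending only on $N$. Consequently, if we enumerate the bad primes $p_1 < p_2 < \cdots$ then
\[
\vol(Y) \;\gg\; \prod_{p_i \in E} p_i^{\,c},
\]
so that $\sum_{p \in E,\, p \leq X} \log p \;\ll\; \log \vol(Y)$. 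Combining this with the prime number theorem $\sum_{p \leq X} \log p \asymp X$, one sees that as soon as $X \gg (\log \vol(Y))^{2}$ there must exist a prime in $[1,X]$ outside $E$; the squared exponent is forced because one must dominate both the logarithmic density of $E$ \emph{and} the $\log p$ weights in Chebyshev's estimate.

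For condition (4) at a prime $p$ which is good in the senses of (2)--(3), I would invoke Bruhat--Tits theory as recalled in \S\ref{sec:BrTits}: the smooth model $\Gfrak_p$ becomes a Chevalley scheme after the unramified base change $\Z_p \to \widehat{\Z_p}$, so its root subgroups give a canonical family of $\SL_2$-subschemes defined over $\widehat{\Z_p}$, permuted by $\Gal(\widehat{\Q_p}/\Q_p)$. In each $\Q_p$-almost simple factor of $\G$ one selects a single Galois orbit of roots; the associated $\SL_2$ descends to $\Z_p$ and projects nontrivially to that factor. Taking the diagonal embedding of $\SL_2$ into the product of these factor-wise $\SL_2$'s gives the homomorphism $\theta_p$. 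The main difficulty --- and what makes this an input drawn from \cite{EMMV} rather than a self-contained estimate --- is verifying that requiring all four properties (2)--(4) simultaneously does not inflate the bad set $E$ beyond what Prasad's lower bound can absorb, so that the exponent $2$ in (1) is preserved.
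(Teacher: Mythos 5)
Your outline follows the same route the paper relies on: the paper's proof of this proposition consists entirely of citations to \cite[\S5.11, \S6.1, \S6.7]{EMMV}, and the argument there is essentially the one you sketch --- Prasad's volume formula forces each prime violating (2)--(4) to contribute a local factor $\gg p^{c}$ to $\vol(Y)$, so the bad set has size $\ll \log\vol(Y)$ and Chebyshev's estimate produces a good prime $\ll (\log\vol(Y))^{2}$, with the hyperspecial model and the $\SL_2$ of (3)--(4) supplied by Bruhat--Tits theory at such a prime. The only point worth flagging is that your bound for the primes excluded in (3) is phrased in terms of $\disc(Y)$ rather than $\vol(Y)$; since $\G$ is semisimple throughout this section, $\height(\G)\ll\vol(Y)^{\star}$ by \cite[App.~B]{EMMV}, so this is harmless.
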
 

\begin{proof}
Parts~(1) and (2) are proved in~\cite[\S5.11]{EMMV};
part~(3) is proved in~\cite[\S6.1]{EMMV}; part~(4) is proved in~\cite[\S6.7]{EMMV}.
\end{proof}

Let $p$ be as in Proposition~\ref{prop:good-place} and let $\theta_p$ be as in Proposition~\ref{prop:good-place}(4).
We define the one-parameter unipotent subgroup~
\[
\text{$u:\Q_p\to\theta_p(\SL_2(\Q_p))$ by }
\; u(t)=\theta_p\left(\begin{pmatrix} 1&t\\0&1\end{pmatrix}\right).
\]
Note that in view of Proposition~\ref{prop:good-place}(2) and~(3) we have
\be\label{eq:u(t)-op-norm}
|\iota(u(t))|\ll (1+|t|_p)^{\star}.
\ee

\subsection{Property $\tau$}\label{sec:prop-tau} 
Recall that $\G$ is quasi-split over $\Q_p$; 
in particular, all of the almost simple factors of~$\G$ are $\Q_p$-isotropic.
Our proof relies on the uniform spectral gap; this deep input has been obtained in a series of papers~\cite{Kazh, O, Sel, JL, BS, LC, GMO}. In particular,
\begin{itemize} 
\item using \cite[Thm.~1.1--1.2]{O} when $\G(F_w)$ has property $(T)$, and 
\item applying property $(\tau)$ in the strong form, see~\cite{LC}, \cite{GMO}, and also~\cite[\S4]{EMMV}, in the general case,
\end{itemize}
we have the following.

\begin{thm}[Property ($\tau$)]\label{thm:unif-prop-tau}
Let $\sigma$ be the probability $\G(\A)$-invariant measure on $\G(\A)/\G(\Q)$.
The representation of $\SL_2(\Q_p)$ via~$\theta_p$ on 
\[
L_0^2(\sigma) :=\bigl\{f\in L^2(\G(\A)/\G(\Q),\sigma):\textstyle\int f\operatorname{d}\!\sigma=0\bigr\}
\]  
is $1/\temp$-tempered. In other words, the matrix coefficients of the $\temp$-fold tensor product
are in $L^{2+\epsilon}(\SL_2(\Q_p))$ for all~$\epsilon>0$.
\end{thm}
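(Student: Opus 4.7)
This statement is a uniform spectral gap in disguise, and my plan is simply to assemble the cited results and verify that the output exponent $M$ depends only on $N$. The role of the map $\theta_p$ is to translate a spectral statement about $\G(\Q_p)$ into one about the sub-$\SL_2$, while the role of Proposition~\ref{prop:good-place}(4) is to ensure that this restriction is nondegenerate.

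The first step is to decompose $L_0^2(\sigma)$, via the spectral theory of the Hecke algebra at $p$, into unitary representations of $\G(\Q_p)$ none of which contains the trivial representation. By Proposition~\ref{prop:good-place}(4), the image $\theta_p(\SL_2(\Q_p))$ projects nontrivially to every $\Q_p$-almost simple factor of $\G(\Q_p)$, so the Howe--Moore vanishing implies that these constituents, restricted along $\theta_p$, have no nonzero $\SL_2(\Q_p)$-fixed vector either. Next I would split into two regimes. If every $\Q_p$-almost simple factor of $\G(\Q_p)$ has Kazhdan's property (T), Oh's theorem~\cite[Thm.~1.1--1.2]{O} yields explicit $L^{2+\epsilon}$-bounds on the $M$-fold tensor product of matrix coefficients, with $M$ depending only on the root system of $\G$ and hence only on $N$; pulling back through $\theta_p$ gives the claim. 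If some factor (for instance an $\SL_2$ or $\operatorname{SU}(n,1)$ factor) fails to be Kazhdan, I would invoke property $(\tau)$ in its strong form due to Clozel~\cite{LC} and Gorodnik--Maucourant--Oh~\cite{GMO}, which supplies a spectral gap in $L_0^2$ that is uniform in the arithmetic datum. Combined with the Burger--Sarnak restriction principle~\cite{BS}, this gap translates into the desired $L^{2+\epsilon}$-estimate along $\theta_p(\SL_2(\Q_p))$.

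The crucial and genuinely nontrivial point is the \emph{uniformity} of $M$ in the datum $(\G,\iota)$: it must depend only on $N$, and in particular neither on the specific $\Q_p$-form of $\G$ nor on the lattice $\G(\Q)$. Uniformity in Oh's bound is built into its explicit dependence on the root system, while uniformity in the general property $(\tau)$ setting is the deep content of~\cite{LC,GMO}, ultimately resting on known bounds toward the Ramanujan conjecture for classical groups transferred via Jacquet--Langlands and Arthur. Once these uniform inputs are in hand, no further analysis is needed: by Proposition~\ref{prop:good-place}(3), $\theta_p$ is a closed immersion of $\Z_p$-group schemes, so the restriction of matrix coefficients from $\G(\Q_p)$ to $\theta_p(\SL_2(\Q_p))$ is automatic and preserves the temperedness exponent.
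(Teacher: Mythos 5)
Your proposal is correct and follows essentially the same route as the paper: the paper's own justification of this theorem is precisely the two-case citation you give — Oh's quantitative bounds \cite{O} when the relevant factors have property (T), and property $(\tau)$ in the strong form \cite{LC,GMO} together with \cite[\S4]{EMMV} in general — with Proposition~\ref{prop:good-place}(4) guaranteeing the nondegeneracy of the restriction along $\theta_p$. The only caveat is your last sentence: the restriction step is not ``automatic'' from the closed immersion of Proposition~\ref{prop:good-place}(3) but rests on the Burger--Sarnak/Oh machinery you already invoked, which is exactly where the paper places it as well.
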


It follows from the above theorem that for any $f_1,f_2\in C_c^\infty(\G(\A)/\G(\Q))$ we have
\be\label{eq:sobnormmc}\Big| \langle u(t) f_1, f_2 \rangle_{\sigma} - 
	\int f_1\operatorname{d}\!\sigma \int\bar f_2\operatorname{d}\!\sigma \Big|
 \ll (1+|t|_p)^{-1/2\temp} \Sob(f_1) \Sob(f_2),
\ee
where $\Sob$ is a certain Sobolev norm. We refer to~\cite[App.~A]{EMMV} for the definition and the discussion of the Sobolev norm $\Sob$.

\medskip

Let $\eta>0$ and put $\Xi_{\G,\eta} := \exp(B_{\gfrak_\infty}(\eta)) \subset \G(\R)$. 
For every prime $q$, we set $K_q := \iota^{-1}(\SL_N(\Z_q)) \subset \G(\Q_q)$.
Put $\Omega_{\G,\eta} := \Xi_{\G,\eta}\times \prod_{\places_f}K_q \subset \bG(\A)$.
We set $\Omega_\G = \Omega_{\G,\eta_0}$, see \S\ref{sec:X-eta}. 

\begin{thm}[Semisimple version of Theorem \ref{thm:adelic-red}]\label{sec:proof-semisimple}
There exists some $\consta\label{k:ss-exp-inj-r}\label{k:ss-exp-vol}$ depending only on $N$, and for any datum $(\bG, \iota)$ with $\bG$ semisimple, there exists some $p \in \places_f$ with
\[
p \ll \bigl(\log(\vol(Y))\bigr)^{2},
\]
so that the following holds. For any $g\in\G(\A)$, there exists some $\gamma\in\G(\Q)$ such that 
$g\gamma=h_1hh_2$, where
$h_1, h_2 \in\Omega_{\G,\eta}$ and $h\in \G(\bbq_p)$ with 
\[
|\iota(h)|\ll \height(\iota(g))^{-\ref{k:ss-exp-inj-r}} \vol(Y)^{\ref{k:ss-exp-vol}}.
\]
Moreover, the implicit multiplicative constants depend only on $N$. 
\end{thm}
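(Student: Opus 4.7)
The plan is to use the effective mixing estimate furnished by property $(\tau)$ to show that for some $t\in\Q_p$ with $|t|_p$ polynomially bounded in $\height(\iota(g))$ and $\vol(Y)$, the point $g\cdot y_0 \in Y$ (where $y_0 := \G(\Q)$) lies in the double thickening $\Omega_{\G,\eta}\cdot u(t)\cdot\Omega_{\G,\eta}\cdot y_0$. Rearranging such a containment will yield the desired decomposition $g\gamma = h_1 h h_2$ with $h = u(t) \in\G(\Q_p)$, and the bound on $|\iota(h)|$ will then follow from~\eqref{eq:u(t)-op-norm}.

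First, via Proposition~\ref{prop:good-place} one fixes the prime $p$ and the homomorphism $\theta_p : \SL_2 \to \Gfrak_p$, whose nontriviality on every almost simple factor of $\G(\Q_p)$ allows the matrix coefficient estimate~\eqref{eq:sobnormmc} to be applied to the $u(t)$-action on $Y$. Next, for an $\eta > 0$ to be chosen, one builds nonnegative smooth test functions $\psi_1, \psi_2 \in C_c^\infty(Y)$ as approximate indicators of $\Omega_{\G,\eta}\cdot y_0$ and $\Omega_{\G,\eta}\cdot g y_0$ respectively. Provided $\Omega_{\G,\eta}$ injects into $Y$ at both basepoints --- which, using the embedding $\iota$ and Lemma~\ref{lem:ht-injr}(1), requires only $\eta \ll \min(\height(\iota(g))^{-\star},\vol(Y)^{-\star})$ --- standard constructions (bump functions archimedeanly, characteristic functions of compact open subgroups at finite places) yield $\int \psi_i\,d\sigma \asymp \eta^{\dim G_\infty}\vol(Y)^{-1}$ and $\Sob(\psi_i) \ll \eta^{-\star}$.

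Now apply \eqref{eq:sobnormmc} with $f_1 = \psi_1$, $f_2 = \psi_2$: the matrix coefficient $\langle u(t)\psi_1, \psi_2\rangle_\sigma$ differs from $\int\psi_1 \cdot \int\psi_2$ by at most $C(1+|t|_p)^{-1/(2M)}\Sob(\psi_1)\Sob(\psi_2)$. For $|t|_p \gg (\eta^{-1}\vol(Y))^\star$ this error is dominated by the main term, forcing the matrix coefficient to be strictly positive, hence $u(t)\Omega_{\G,\eta}\cdot y_0 \cap \Omega_{\G,\eta}\cdot g y_0 \neq \emptyset$. Thus there exist $h_1,h_2\in\Omega_{\G,\eta}$ and $\gamma\in\G(\Q)$ with $u(t)h_1 = h_2 g \gamma$ in $\G(\A)$, which rearranges to $g\gamma = h_2^{-1} u(t) h_1$; since $\Omega_{\G,\eta}^{-1} = \Omega_{\G,\eta}$ (its archimedean factor is symmetric and the $K_q$ are subgroups), this is the required decomposition with $h = u(t)\in\G(\Q_p)$. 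Choosing $\eta$ as large as the injectivity conditions allow then forces $|t|_p \ll \height(\iota(g))^\star\vol(Y)^\star$, and the asserted bound on $|\iota(h)|$ follows from~\eqref{eq:u(t)-op-norm}.

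The main obstacle is the Sobolev-theoretic bookkeeping on the adelic quotient $Y$: one must produce genuine test functions of the claimed order, and in particular certify that $\Omega_{\G,\eta}$ injects into $Y$ at the moving basepoint $g y_0$. The latter passes through $\iota$ and Lemma~\ref{lem:ht-injr}, but care is needed because rational elements of $\G(\Q)$ do not correspond bijectively to rational elements of $\iota(\G)(\Q)$ --- this discrepancy is tamed by the finite central kernel of $\iota$. Secondary points include the injectivity of $\Omega_{\G,\eta}$ at $y_0$ itself (forced via a standard reduction-theoretic height bound, contributing the $\vol(Y)^{-\star}$ factor in $\eta$) and the careful choice of the exponent in the Sobolev norm so that the implied constants depend only on $N$.
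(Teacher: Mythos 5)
Your proposal follows the same strategy as the paper's proof: fix $p$ and $u(t)$ via Proposition~\ref{prop:good-place}, build bump-function surrogates for thickened orbits of $y_0$ and $gy_0$ with Sobolev norm $\ll\eta^{-\star}$ and mass $\gg\eta^{\star}\vol(Y)^{-1}$, force a nonvanishing matrix coefficient via~\eqref{eq:sobnormmc} once $|t|_p\gg\vol(Y)^\star\eta^{-\star}$, and unwind to $g\gamma=h_1 u(t) h_2$, bounding $|\iota(u(t))|$ by~\eqref{eq:u(t)-op-norm}. The only cosmetic differences are that the paper handles the $\#\mathrm{Z}(\Q)$-fold multiplicity of the orbit map explicitly rather than demanding injectivity, and that the injectivity-radius constraint at the identity coset is an absolute constant rather than the superfluous $\vol(Y)^{-\star}$ you impose; neither affects the argument.
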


\begin{proof}
Recall that $m$ is the Haar measure on $G$ which projects to $\mu_Y$. 
Let $\lambda$ be the Haar measure on $\G(\A)$ so that $\iota_*\lambda=m$.
By~\cite[\S5.9]{EMMV} there exists some $M\geq 1$ depending only on $\dim\G$ so that 
\be\label{eq:la-M-ss-adelic}
1/ M\leq \lambda(\G(\A)/\G(\Q))\leq M.
\ee
In view of the definition of $\vol(Y)$, this implies that $\vol(Y)\asymp\lambda(\Omega_\G)^{-1}$.

Let $\eta$ be a positive constant.
For any $g\in\G(\A)$ put $[g]=g\G(\Q)$; assume $\iota([g])\in X_{\injr}$.
We claim that if $h,h'\in \Omega_{\G,\eta}$ are so that $h[g]=h'[g]$,
then $h^{-1}h'\in{\bf Z}(\Q)$, where ${\bf Z}:={\rm Z}(\G)$ denotes the center of $\bG$.
To see this, apply $\iota$ to the equation $h[g]=h'[g]$. Using the definition of 
$X_\eta$ and the fact $\iota(\Omega_{\G,\eta})\subset\Omega_\eta$, we get that $\iota(h)=\iota(h')$.
Hence, $h^{-1}h'\in{\bf Z}(\A)$; moreover $h^{-1}h'=g^{-1}h^{-1}h'g\in\G(\Q)$. Thus
$h^{-1}h'\in{\bf Z}(\Q)$ as claimed.
This claim in particular implies that $\pi_{[g]}: \Omega_{\G,\eta}\to\G(\A)/\G(\Q)$ defined by $\pi_{[g]}(h):=h[g]$ 
is at most $\#{\bf Z}(\Q)$-to-one on $\Omega_{\G,\eta}$.

By~\cite[App.~A]{EMMV}, there exists a function $f \in C_c^{\infty}(\G(\A))$ with the following properties: 
\begin{itemize}
\item $0\leq f\leq 1$,
\item for all $h\not\in\Omega_{\G,\eta}$ we have $f(h)=0$ and for all $h\in\Omega_{\G,\eta/2}$ 
we have $f(h)=1$,
\item $\Sob(f)\ll \eta^{-\star}$.
\end{itemize}
For every $g\in\G(\A)$ with $\iota([g])\in X_{\injr}$, define $f_{[g]}\in C_c^{\infty}(\G(\A)/\G(\Q))$ as follows. 
If $[g']\in\pi_{[g]}(\Omega_{\G,\eta})$, put $f_{[g]}([g'])=\sum_{\pi_{[g]}(h)=[g']} f(h)$; if $[g']\not\in\pi_{[g]}(\G_\eta)$, define 
$f([g'])=0$. Then 
\begin{enumerate}
\item $0\leq f_{[g]}\leq \#{\bf Z}(\Q)\ll1$, 
\item $f([g'])=0$ for all $[g']\not\in\pi_{[g]}(\Omega_{\G,\eta})$ and $f_{[g]}([g'])\geq 1$ for all $[g']\in\pi_{[g]}(\Omega_{\G,\eta/2})$, 
\item $\Sob(f_{[g]})\ll \eta^{-\star}$.
\end{enumerate}

Recall the measure $\sigma$ from Theorem~\ref{thm:unif-prop-tau}. By~\eqref{eq:la-M-ss-adelic}, we have that
$
\int f_{[g]}\operatorname{d}\!\sigma\asymp\int f_{[g]}\operatorname{d}\!\lambda.
$
The set $\Omega_{\G}$ can be covered by $\ll\eta^{-\star}$ translates among $\{h\Omega_{\G,\eta/2}: h\in\G(\A)\}$. Since $\lambda$ is $\bG(\A)$-invariant, this implies that $\lambda(\Omega_\bG) \ll \eta^{-\star} \lambda(\Omega_{\bG,\eta/2})$. Thus,
\be\label{eq:int-f-eta-GA-GQ}
\int f_{[g]}\operatorname{d}\!\sigma\asymp\int f_{[g]}\operatorname{d}\!\lambda \geq  \lambda(\Omega_{\bG, \eta/2}) \gg\eta^{\star} \vol(Y)^{-1};
\ee
here, we used properties (1) and~(2) of $f_{[g]}$, and the fact $\vol(Y)\asymp\lambda(\Omega_\G)^{-1}$.

Apply~\eqref{eq:sobnormmc} with $f_1=f_{[e]}$ and $f_2=f_{[g]}$. Using property~(3) of $f_1$ and $f_2$, we get that 
\begin{equation}\label{eq:sobnormmc-pf-ss}\Big| \langle u(t) f_1, f_2 \rangle_{\sigma} - 
	\int f_1\operatorname{d}\!\sigma \int f_2\operatorname{d}\!\sigma \Big|\\ 
 \ll (1+|t|_p)^{-1/2\temp} \eta^{-\star}.
\end{equation}

We get from~\eqref{eq:sobnormmc-pf-ss} and~\eqref{eq:int-f-eta-GA-GQ} (which also holds for $f_{1}$)
that if $|t|_p\gg \vol(Y)^\star\eta^{-\star}$, then 
\be\label{eq:inn-prod-n0}
\langle u(t) f_1, f_2 \rangle_{\sigma}\neq 0.
\ee
This implies in particular that if $|t|_p\gg\vol(Y)^\star\eta^{-\star}$, then the following holds. 
There exist $h_1, h_2 \in\G(\A)$ so that $f_1([h_1])\neq 0$, $f_2([h_2^{-1}g])\neq0$, and
\be\label{eq-ut-h1hh2}
u(t)h_1\G(\Q)=h_2^{-1}g \G(\Q).
\ee
In view of the fact that $\Omega_{\G,\eta}=\Omega_{\G,\eta}^{-1}$, it follows from the above and 
property~(2) that $h_i\in \Omega_{\bG,\eta}$.

Finally, we choose $t$ so that  \eqref{eq:inn-prod-n0} holds while $|t|_p\asymp\vol(Y)^\star\eta^{-\star}$. In this way, by~\eqref{eq:u(t)-op-norm} we have $|\iota(u(t))|\ll (1+|t|_p)^{\star} \ll \vol(Y)^\star\eta^{-\star}$. 
In view of~\eqref{eq-ut-h1hh2}, by taking $h_1$ and $h_2$ as above and $h=u(t)$, the proof of Theorem~\ref{sec:proof-semisimple} is complete. 
\end{proof}

Before proceeding to the proof of general case, we need the following

\begin{lemma}\label{lem:unip-case}
There exists some $\consta\label{k:unip-exp}$ so that the following holds. 
Let $\bR$ be a unipotent $\Q$-group, given with an embedding $\iota: \bR \to \SL_N$. 
Let $S\subset\places$ be a finite set of places containing the infinite place;
put $p_S:=\max\{p\in S \cap \places_f \}$. 
Let $v\in S$. For any $g\in\bR(\A)$, there exists some $\gamma\in\bR(\Q)$ so that
\[
\iota(g\gamma)=(h_S, (h_q)_{q\not\in S})\in \SL_N(\bbq_S)\times \prod_{q\not\in S}\SL_N(\Z_q),
\]
$|h_v|\ll p_S^{\ref{k:unip-exp}}\height(\bR)^{\ref{k:unip-exp}}$, and for every $w \in S - \{v\}$, 
we have $|h_{w}|\ll p_S^{\ref{k:unip-exp}}$. 
\end{lemma}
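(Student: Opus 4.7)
The plan is to combine strong approximation for $\bR$ with an $S$-adic reduction performed in the Lie algebra. Set $\fr := d\iota(\Lie \bR) \subset \fsl_N$; since $\bR$ is unipotent and $\iota$ is an embedding, $\exp : \fr \to \iota(\bR)$ is a polynomial isomorphism of degree $< N$ with polynomial inverse $\log$, both having coefficients in $\tfrac{1}{N!}\Z$. The first step is to use strong approximation for $\bR$ (automatic since $\bR$ has trivial character group over $\Q$) to find $\gamma_0 \in \bR(\Q)$ with $(g\gamma_0)_q \in C_q := \iota^{-1}(\SL_N(\Z_q))$ for all $q \notin S$, reducing the problem to the $S$-component $\tilde g_S := (g\gamma_0)_S$ and the cocompact lattice $\Delta_S := \bR(\Q) \cap \prod_{q \notin S} C_q$ in $\bR(\Q_S)$.

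Next, I would choose a basis of $\fr$ adapted to the distinguished place $v$. Let $\{v_1, \ldots, v_n\}$ be a $\Z$-basis of $\fr \cap \fsl_N(\Z)$ with $\|v_i\|_\infty \ll \height(\bR)$ (existing by Minkowski's theorem); the integrality gives $\|v_i\|_q \leq 1$ at every finite prime $q$ for free. I would set $w_i := v_i$ if $v = \infty$, and $w_i := (p^*)^{-k_i} v_i$ with $k_i := \lceil \log_{p^*} \|v_i\|_\infty \rceil$ if $v = p^*$ is finite. This yields $\|w_i\|_w \leq 1$ at every $w \neq v$ and $\|w_i\|_v \ll p_S \height(\bR)$; moreover $\Z_S \cdot w_i$ equals $\Z_S \cdot v_i$ since $p^* \in S$, so the underlying $\Z_S$-lattice is preserved.

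For the reduction itself, writing $\xi := \log\iota(\tilde g_S) = \sum_i c_i w_i \in \fr(\Q_S)$ and using the explicit fundamental domain $F_S = [0,1) \times \prod_{p \in S_f} \Z_p$ for $\Z_S$ acting on $\Q_S$, I would find $n_i \in \Z_S$ with $|c_{i,w} - n_{i,w}|_w \leq 1$ at every $w \in S$. Setting $\gamma := \gamma_0 \cdot \iota^{-1}\bigl(\exp(-\sum_i n_i w_i)\bigr)$, the $w$-component becomes $h_w = \exp\bigl(\sum_i (c_{i,w} - n_{i,w}) w_i\bigr)$, and polynomial expansion of $\exp$ gives
\[
\|h_w\|_w \ll \bigl(1 + \max_i |c_{i,w} - n_{i,w}|_w \cdot \|w_i\|_w\bigr)^N,
\]
which is $\ll 1$ for $w \neq v$ and $\ll (p_S \height(\bR))^N$ for $w = v$, as required.

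The main obstacle is the asymmetric bound, which forces the $v$-dependent rescaling of the basis in the second step; without it, when $v$ is finite, the archimedean component would pick up an unwanted $\height(\bR)$ factor. A secondary technical point is that the $1/k!$ denominators in $\exp$ can cause $\iota(\gamma_1)$ to miss $\SL_N(\Z_q)$ at primes $q \leq N$ outside $S$, but since there are at most $N$ such primes this is handled either by absorbing a constant depending only on $N$ into the implicit constants, or by performing a final strong-approximation correction at these finitely many bad small primes that does not disturb the main bounds at the places of $S$.
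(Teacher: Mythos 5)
Your overall strategy (strong approximation for $\bR$ to reduce to the $S$-component, then a $\log$/reduce/$\exp$ argument in the Lie algebra with a $\Z_S$-basis of $\fr\cap\fsl_N(\Z_S)$ whose ``large'' norm is pushed onto the distinguished place $v$) is exactly the paper's route, which in turn recycles the argument of Lemma~\ref{lem:ht-vol-unip}. However, the key reduction step as you wrote it has a genuine gap: the identity $\exp\bigl(\sum_i c_i w_i\bigr)\exp\bigl(-\sum_i n_i w_i\bigr)=\exp\bigl(\sum_i (c_i-n_i)w_i\bigr)$ is false unless $\fr$ is abelian, and the boundedness conclusion actually fails, not just the identity. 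Take the Heisenberg group in $\SL_3$ with $\xi=Te_{12}+e_{23}$, $T$ large real, $S=\{\infty\}$: then $\exp(\xi)=I+Te_{12}+e_{23}+\tfrac{T}{2}e_{13}$, your recipe gives $n_1=\lfloor T\rfloor$, $n_2=n_3=0$, and
\[
\exp(\xi)\exp(-n_1e_{12})=I+(T-n_1)e_{12}+e_{23}+\tfrac{T}{2}e_{13},
\]
whose $(1,3)$ entry is unbounded. The Baker--Campbell--Hausdorff correction terms involve the \emph{unreduced} coefficients and cannot be ignored. The standard fix is to reduce iteratively along the lower central series $\fr\supset[\fr,\fr]\supset\cdots$ (at most $N$ steps), recomputing the coordinates of the current element before each reduction; one then also needs a basis adapted to this filtration with norms $\ll\height(\bR)^{\star}$, which follows from the structure-constant bound and the Siegel-type lemmas of \S\ref{sec:small-levi}. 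Since the nilpotency class is $<N$ and each step loses only a bounded power, the final exponent still depends only on $N$.

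A second, smaller issue is your treatment of the primes $q\le N$ with $q\notin S$: integrality of $h_q$ in $\SL_N(\Z_q)$ is a yes/no condition required by the statement, so it cannot be ``absorbed into the implicit constants,'' and the proposed ``final strong-approximation correction'' is circular as stated (any correcting $\gamma_2\in\bR(\Q)$ must itself be integral at all other $q\notin S$ without spoiling the bounds at $S$). The clean device, which is the one the paper uses, is to perform the reduction modulo $N!\,\Z_S$ rather than $\Z_S$, i.e.\ to take the correcting element in $\iota^{-1}\bigl(\exp(\fr(\Q_S)\cap N!\,\fsl_N(\Z_S))\bigr)$: since $(N!)^k/k!\in\Z$ for $k\le N$, such an element automatically lies in $\SL_N(\Z_S)$, hence in $\SL_N(\Z_q)$ for every $q\notin S$, at the cost of a factor $N!$ in the fundamental domain which is harmless.
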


\begin{proof}
The proof is, mutatis mutandis, part of the proof of Lemma~\ref{lem:ht-vol-unip}. We briefly recall the argument for the convenience of the reader. 
For every prime $q$, put 
\[
C_q=\iota^{-1}(\iota(\bR(\Q_q))\cap\SL_N(\Z_q)).
\] 
By the strong approximation theorem for unipotent groups, we have
\[
\bR(\A)=\bigl(\bR(\bbq_S)\times\textstyle\prod_{q\not\in S}C_q\bigr)\bR(\Q).
\] 
Hence, there exists some $\gamma_0\in\bR(\Q)$ so that 
\[
g\gamma_0=(\hat g_S,(\hat g_q)_{q\not\in S})\in\bR(\bbq_S)\times\textstyle\prod_{q\not\in S}C_q.
\]

Fixing a $\Z_S$-basis for $\fr(\Q_S) \cap\fsl_N(\Z_S)$, we have the following. 
There exists some $\hat\gamma_1\in\iota^{-1}(\exp(\fr(\Q_S) \cap N!\fsl_N(\Z_S)))$ 
so that $h_S=\iota(\hat g_S\hat\gamma_1)$ satisfies
\[
|h_v|\ll p_S^\star\height(\bR)^\star\quad \text{and} \quad|h_w|\ll p_S^\star \textrm{ for } w \in S-\{v\}.
\] 
Note that $\iota(\hat\gamma_1) \in \iota(\bR(\Q_S)) \cap \SL_N(\Z_S)$, hence $\hat\gamma_1 \in \bR(\Q)$. 

Let $\gamma_1$ be the diagonal embedding of $\hat\gamma_1$ in $\bR(\A)$. Then 
since $\iota(\hat\gamma_1)\in \SL_N(\Z_S)$, we get that 
\[
g\gamma_0\gamma_1=(\hat g_S,(\hat g_q)_{q\not\in S})\gamma_1=(\hat g_S\hat\gamma_1, (\tilde g_q)_{q\not\in S})\in\bR(\bbq_S)\times\textstyle\prod_{q\not\in S}C_q. 
\] 
The claim thus follows with $\gamma=\gamma_0\gamma_1$.
\end{proof}

\subsection{Proof of Theorem~\ref{thm:adelic-red}}\label{sec:proof-general}
Let $g\in\G(\A)$ and write $g=g_Hg_R$ where $g_H\in\bH(\A)$ and $g_R\in\bR(\A)$; 
recall that ${\rm pr}_H(g) = g_H$.

First, we apply Theorem~\ref{sec:proof-semisimple}, i.e.\ the semisimple case, to the pair $(\bH, \iota_{|_H})$. 
In view of Lemma~\ref{lem:ht-injr}, we have $\iota(g_H\G(\Q))\in X_\eta$ for $\eta :=\ref{eq:ht-injr}\height(\iota(g_H))^{-\ref{eq:ht-injr}}$. 
Thus, there exist some $\gamma_0\in\bH(\Q)$ and some 
$
p\ll \bigl(\log\vol(Y_H)\bigr)^2
$
so that the following holds.
There are $h \in\bH(\bbq_p)$ and $h_1, h_2 \in \Omega_\bH \subset \Omega_\bG$ 
such that $g_H\gamma_0= h_1 h h_2$ and 
\[
|\iota(h)|\ll {\eta}^{-\ref{k:ss-exp-vol}}\vol(Y_H)^{\ref{k:ss-exp-vol}}.
\]
This estimate implies that 
\begin{align}
\notag|\iota(h)|&\ll {\height(\iota(g_H))}^{\star}\vol(Y_H)^{\star} &&\text{since $\eta =\ref{eq:ht-injr}\height(\iota(g_H))^{-\ref{eq:ht-injr}}$}\\
\notag&\ll \height(\G)^\star\height(\iota(g))^\star\vol(Y_H)^\star&&\text{by Lemma~\ref{lem:height-g-g-H}}\\
\notag&\ll \height(\G)^\star\height(\iota(g))^\star\disc(Y)^\star&&\text{by Prop.~\ref{prop:vol-comp}}\\
\label{eq:ss-est-pf}&\ll\height(\iota(g))^\star\disc(Y)^\star&&\text{by~\eqref{eq:def-discY}}.
\end{align}

Also note that by Proposition~\ref{prop:vol-comp} we have
\be\label{eq:p-est}
p\ll \bigl(\log\vol(Y_H)\bigr)^2\ll \bigl(\log\height(Y)\bigr)^\star.
\ee

Apply Lemma~\ref{lem:unip-case} with  $S=\{\infty, p\}$ and $v=p$ to the element $\gamma_0^{-1}g_R\gamma_0\in\bR(\A)$; we get the following.
There exists some $\gamma_1\in\bR(\Q)$ for which
\begin{enumerate}[label=(\alph*)]
\item $\iota(\gamma_0^{-1}g_R\gamma_0\gamma_1)\in \SL_N(\Z_q)$ for all primes $q\neq p$,
\item $|\iota(\gamma_0^{-1}g_R\gamma_0\gamma_1)_\infty|\ll p^\star$, and 
\item $|\iota(\gamma_0^{-1}g_R\gamma_0\gamma_1)_p|\ll p^\star\height(\bR)^\star\ll p^\star\height(\G)^\star$.
\end{enumerate}
\medskip

Set $\gamma=\gamma_0\gamma_1\in\G(\Q)$. Let us write
\[
(\hat g_\infty, \hat g_p, (\hat g_q)_{q\not\in S}):=\iota(g\gamma)=\iota(g_H\gamma_0)\iota(\gamma_0^{-1}g_R\gamma_0\gamma_1).
\]
The above estimates then imply that
\begin{enumerate}
\item By (a) and $h_i\in \Omega_\G$, $i=1,2$, we have $\hat g_q\in \SL_N(\Z_q)$ for all primes $q\neq p$.
\item By (b) and $h_i\in \Omega_\G$, $i=1,2$, we have 
\begin{align*}
|\hat g_\infty|&\ll |\iota(h_1 h h_2)_\infty| \cdot |\iota(\gamma_0^{-1}g_R\gamma_0\gamma_1)_\infty|\ll p^\star\\
&\ll (\log\disc (Y))^\star&&\text{by~\eqref{eq:p-est}}
\end{align*}
\item For the prime $p$, we have 
\begin{align*}
|\hat g_p|&\ll |\iota(h_1 h h_2)_p||\iota(\gamma_0^{-1}g_R\gamma_0\gamma_1)_p|\\
&\ll \height(\iota(g))^\star\disc(Y)^\star p^\star\height(\G)^\star&&\text{by~\eqref{eq:ss-est-pf} and~(c)}\\
&\ll \height(\iota(g))^\star \disc(Y)^\star \height(\G)^\star &&\text{by~\eqref{eq:p-est}}\\
&\ll \height(\iota(g))^\star\disc(Y)^\star&&\text{by~\eqref{eq:def-discY}}.
\end{align*}
\end{enumerate}

The proof is complete.
\qed

%%%%%%%%%%%
%%%%%%%%%%%

\section{$S$-Arithmetic Quotients}\label{sec:local}

In this section, we discuss some implications of the statement and the proof of Theorem~\ref{thm:adelic-red} in the local setting. The main results are stated in Theorem~\ref{thm:eff-diam-local-semisimple} which deals with the case of semisimple groups and Theorems~\ref{thm:eff-strong-app-direct} and~\ref{thm:eff-strong-app} which can be thought of as {\em effective} versions of the strong approximation theorem.

\subsection{The setup}\label{sec:local-notation}

Let $\lcagr \subset \SL_d$ be a $\Q$-group so that $\rad(\lcagr)=\rad_u(\lcagr)$. 
Let $S\subset\places$ be a finite set of places containing the infinite place.
Define 
\[
\lcgr :=\prod_{v\in S}\lcagr(\bbq_v)\quad\text{and}\quad\fl :=\oplus_{v\in S}\fl_v, 
\] 
where $\fl_v :=\Lie(\lcagr)(\bbq_v)$.

%\subsection{Simply connected covering of $\lcagr$}\label{sec:adj-L}\label{sec:simply-conn}
Let $\bR=\rad_u(\lcagr)$. 
Fix a Levi subgroup $\bH$ of $\lcagr$ so that $\height(\bH)\ll\height(\lcagr)^\star$, see Proposition~\ref{prop:LiesmallLevi}.
We let $\tilde\bH$ denote the simply connected covering of $\bH$.
Put $\tlg=\tilde\bH \ltimes \bR$, where the action of $\tilde\bH$ on $\bR$
factors through the action of $\bH$ via the natural covering map $\pi':\tilde\bH\to\bH$. 
By the construction of $\tlg$, $\pi'$ extends to an epimorphism $\pi:\tlg\to\lcagr$ with finite central kernel, given by $\pi(g)=\pi(g_{\tilde \bH}g_{\bR})=\pi'(g_{\tilde \bH})g_\bR$, where $g_{\tilde\bH}\in\tilde\bH$ and $g_{\bR}\in\bR$. 

Let $\tlc:=\pi(\tlg(\Q_S))$; then $\tlc$ is a normal subgroup of $\lcgr$ and $\lcgr/\tlc$ is a finite abelian group --- it is worth mentioning that this finite group can be identified with a subgroup of $\prod_S \rH^1(\Q_v,Z(\tilde\bH))$.

\subsection{Two notions of complexity}\label{sec:S-arith-ss}\label{sec:vol-vol}
For every $q\in\places_f$ put $K_q=\pi^{-1}(\SL_d(\Z_q))$. Define the subgroups $\Delta$ and $\Gamma$ of $\tlg(\Q_S)$ as follows:
\be\label{eq:def-Delta}
\Delta:=\text{the projection of $\tlg(\Q)\cap(\tlg(\Q_S) \times \textstyle\prod_{q\not\in S}K_q)$ to $\tlg(\Q_S)$},
\ee
and $\Gamma:=\pi^{-1}(\SL_d(\Z_S))$. 
Note that $\Delta$ is a normal subgroup of $\Gamma$; moreover, both $\Delta$ and $\Gamma$ are lattices in $\tlg(\Q_S)$.

Put $Z :=\pi\bigl(\tlg(\A)/\tlg(\Q)\bigr)$. 
Similarly define 
\[
\hat Z:=\pi(\tlg(\Q_S)/\Delta)=\tlc/\tlc\cap\SL_d(\Z_S)=\tlc/\pi(\Gamma).
\]

As was done in \S\ref{sec:S-arith-intro}, we define $\vol(\hat Z)=m_S(\tlc\cap\Omega_S)^{-1}$ where $\Omega_S=\Xi_{\eta_0}\times \prod_{q\in S - \{\infty\}}\SL_d(\Z_q)$
and $m_S$ is a Haar measure on $\tlg(\Q_S)$ normalized so that $m_S(\hat Z)=1$. Here and in what follows, we abuse the notation and denote $\pi_*\nu$ simply by $\nu$, for any measure $\nu$. 

We also put $\height(\hat Z)=\max\{\height(\lcagr), \vol(\hat Z)\}$.

\begin{propo} \label{prop:vol-vol}
There exist $\consta\label{k:volZ-volZhat}$, $\consta\label{k:volZ-volZhat-up}$, and
$\consta\label{k:volZ-volZhat-M}$ so that for all $\bL$ as in \ref{sec:local-notation} with $\vol(Z)\gg1$, we have the following.
\begin{enumerate}
\item $\ref{k:volZ-volZhat-M}^{-1}\height(Z)^{\ref{k:volZ-volZhat}}\leq\height(\hat Z)\leq \ref{k:volZ-volZhat-M}\height(Z)^{\ref{k:volZ-volZhat-up}}$;
\item If $\tlg$ is semisimple or unipotent, then  
\[
\ref{k:volZ-volZhat-M}^{-1}\vol(Z)^{\ref{k:volZ-volZhat}}\leq\vol(\hat Z)\leq \ref{k:volZ-volZhat-M}\vol(Z)^{\ref{k:volZ-volZhat-up}}
\]
\end{enumerate}
\end{propo}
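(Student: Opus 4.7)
The plan is to prove part~(2) first in the unipotent and semisimple cases, and then to deduce part~(1) in the general case via an $S$-arithmetic version of the Levi-decomposition argument from~\S\ref{sec:eff-levi}.

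The unipotent case of~(2) would follow the lines of Lemma~\ref{lem:ht-vol-unip}. Strong approximation for unipotent groups identifies a fundamental domain for $\tlg(\Q)$ in $\tlg(\A)$ with one of the form $D_S \times U^S$, where $D_S$ is a fundamental domain for $\Delta$ in $\bR(\Q_S)$ and $U^S := \prod_{q \notin S} \bigl(\bR(\Q_q) \cap \SL_d(\Z_q)\bigr)$. Imposing $m_\A(\tlg(\A)/\tlg(\Q)) = 1$ and integrating out the $U^S$-factor yields
\[
\vol(Z) \cdot \textstyle\prod_{q \notin S} m_q\bigl(\bR(\Q_q) \cap \SL_d(\Z_q)\bigr) \asymp \vol(\hat Z).
\]
The infinite product on the left is then evaluated via a $\Z$-basis of $\fr \cap \fsl_d(\Z)$ in the same fashion as Lemma~\ref{lem:ht-vol-unip}, giving polynomial control in $\height(\bR)$ and hence in $\height(\bL)$.

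The semisimple case of~(2) follows the same template. Since $\tlg$ is semisimple and simply connected, Kneser-Platonov strong approximation furnishes $\tlg(\A) = \tlg(\Q_S) \cdot K^S \cdot \tlg(\Q)$ with $K^S = \prod_{q\notin S} K_q$, and the analogous integration gives $\vol(Z) \cdot \prod_{q \notin S} m_q(K_q) \asymp \vol(\hat Z)$. The essential input is a polynomial bound on $\prod_{q \notin S} m_q(K_q)$ in terms of $\height(\bL)$: this is furnished by Prasad's volume formula~\cite{Pr}, which expresses $\vol(Z)$ as an explicit Euler product of local parahoric volumes, combined with the Borel-Prasad bounds~\cite{BorPr} on the contribution of the finitely many primes at which $K_q$ is not hyperspecial. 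The polynomial bound would be extracted following~\cite[\S5--\S6]{EMMV}.

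For part~(1) in the general case, I would apply Proposition~\ref{prop:LiesmallLevi} to write $\tlg = \tilde\bH \ltimes \bR$ with $\height(\tilde\bH), \height(\bR) \ll \height(\bL)^\star$, and repeat the product-measure analysis of~\S\ref{sec:prod-str} in the $S$-arithmetic setting. This should give an $S$-arithmetic analogue of Proposition~\ref{prop:vol-comp}, namely
\[
\bigl(\vol(\hat Z_{\tilde\bH}) \cdot \vol(\hat Z_{\bR})\bigr)^{\star} \ll \height(\hat Z) \ll \bigl(\vol(\hat Z_{\tilde\bH}) \cdot \vol(\hat Z_{\bR})\bigr)^{\star},
\]
and similarly for $\height(Z)$. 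Combining this with~(2) applied to $\tilde\bH$ and $\bR$ separately yields a polynomial comparison between $\vol(Z)$ and $\vol(\hat Z)$ up to factors of $\height(\bL)^\star$, which is absorbed into the maximum defining $\height$, giving~(1).

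The main obstacle I anticipate lies in the semisimple case: at primes where $\tlg$ is ramified, $K_q$ is only parahoric and not hyperspecial, and the corresponding factor in Prasad's formula involves the reductive quotient of the special fiber together with Galois-cohomological invariants of the maximal torus. Extracting the polynomial bound requires a careful accounting of the set of bad primes, and it is precisely here that the deep work of~\cite{Pr,BorPr} and the techniques of~\cite{EMMV} are indispensable; without this input, the infinite product of local factors could \emph{a priori} grow faster than any polynomial in $\height(\bL)$.
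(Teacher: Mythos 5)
Your treatment of the unipotent case and your outline for part~(1) (Levi decomposition with controlled heights, an $S$-arithmetic analogue of the product-measure analysis of \S\ref{sec:prod-str} and Proposition~\ref{prop:vol-comp}, then part~(2) applied to $\tilde\bH$ and $\bR$ separately) follow the paper's route. For part~(1) the paper must additionally compare the volume normalized by $\Gamma$ with the one normalized by the product lattice $\Gamma_H\Gamma_R$, which it does via the bound $[\Gamma:\Gamma_H\Gamma_R]\ll\height(\lcagr)^\star$; you gloss over this, but it is in the spirit of what you propose.

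The genuine gap is in the semisimple case of part~(2). Your identity $\vol(\hat Z)\asymp\vol(Z)\cdot\prod_{q\notin S}m_q(K_q)$ is not correct, and more importantly you have misidentified the essential input. With $\lambda$ normalized so that $\lambda(Z)=1$, the product $\prod_{q\notin S}\lambda_q(K_q)$ appears in \emph{both} $\lambda(\tlg(\A)/\tlg(\Q))$ and $\vol(Z)^{-1}$ and cancels; no polynomial bound on that Euler product is needed, so the worry in your last paragraph about ramified primes in Prasad's formula is aimed at the wrong quantity. What the strong-approximation computation actually gives is $\vol(\hat Z)=\hat M\,\vol(Z)\cdot[\Gamma:\Delta]^{-1}$, where $\hat M$ is controlled by the Tamagawa-type bound $1/M\leq\lambda(\tlg(\A)/\tlg(\Q))\leq M$ of~\cite[\S5.9]{EMMV}. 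The whole discrepancy between the two volumes is the index $[\Gamma:\Delta]$ of the lattice $\Delta$ furnished by strong approximation inside the full integral stabilizer $\Gamma=\pi^{-1}(\SL_d(\Z_S))$ --- recall that $\hat Z=\tlc/\pi(\Gamma)$, not $\tlg(\Q_S)/\Delta$. The deep input is therefore the bound $[\Gamma:\Delta]\leq\vol(Z)^{\kappa}$ with $\kappa<1$, obtained in~\cite[\S5.12]{EMMV} following Borel--Prasad and Belolipetsky; this is a statement about class numbers and the Galois cohomology of the center, not about local parahoric volumes. Without it the lower bound $\vol(\hat Z)\gg\vol(Z)^{\star}$ does not follow, since a priori the index could swallow all of $\vol(Z)$.
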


\begin{proof}
We first prove part~(2) above.

First note that if $\tlg$ is unipotent, then $\tlg=\lcagr$ and the same argument as in Lemma~\ref{lem:ht-vol-unip} 
implies that $\height(\tlg)^\star\ll\vol(\hat Z)\ll \height(\tlg)$. The claim in this case follows from this and Lemma~\ref{lem:ht-vol-unip}.

We now assume that $\tlg$ is semisimple. 
In this case we will actually prove
\be\label{eq:vol-vol-semi}
\ref{k:volZ-volZhat-M}^{-1}\vol(Z)^{\ref{k:volZ-volZhat}}\leq \vol(\hat Z)\leq \ref{k:volZ-volZhat-M}\vol(Z)
\ee
when $\vol(Z)$ is large enough.

Let $\lambda$ denote the Haar measure on $\tlg(\A)$ normalized so that $\lambda(Z)=1$.
By~\cite[\S5.9]{EMMV} there exists\footnote{The discussion in~\cite[\S5.9]{EMMV} assumes that $\tlg$ is $\Q$-almost simple; since $\tlg$ is simply connected and semisimple, we can decompose $\tlg=\tlg_1\cdots\tlg_r$ as a direct product of $\Q$-almost simple factors and apply the argument to each factor separately.} some ${M}\geq 1$ depending only on $\dim\tlg$ so that 
\be\label{eq:la-M}
1/{M}\leq \lambda(\tlg(\A)/\tlg(\Q))\leq {M}.
\ee

Since $\tlg$ is simply connected and $\tlg(\Q_S)$ is not compact, we have
\[
\tlg(\A)= \left(\tlg(\Q_S) \times \textstyle\prod_{q\not\in S}K_q \right)\tlg(\Q).
\]
Write $\lambda=\prod_{\places} \lambda_v$ and set $\lambda_S :=\prod_{S}\lambda_v$. 
In view of the above and the definition of $\Delta$, see~\eqref{eq:def-Delta}, we get the following. 
\be\label{eq:la-laS-1}
\lambda(\tlg(\A)/\tlg(\Q))=\lambda_S(\tlg(\Q_S)/\Delta) \cdot \textstyle\prod_{q\not\in S}\lambda_q(K_q)
\ee
Recall furthermore that 
\be\label{eq:la-laS-2}
\vol(Z)=\lambda(\pi(\tlg(\A))\cap\Omega)^{-1}=\lambda_S(\tlc \cap \Omega_S)^{-1} \cdot \textstyle\prod_{q\not\in S}\lambda_q(\pi(K_q))^{-1}.
\ee
From~\eqref{eq:la-M},~\eqref{eq:la-laS-1}, and~\eqref{eq:la-laS-2} we get that
\be\label{eq:laS-Delta}
\lambda_S(\tlg(\Q_S)/\Delta)=M'\lambda_S(\tlc \cap \Omega_S) \cdot \vol(Z)
\ee 
where $M'\in [1/{M},{M}]$.

We can now make the following computation.
\begin{align}
\label{eq:laS-Gamma}\lambda_S(\tlg(\Q_S)/\Gamma)&=\lambda_S(\tlg(\Q_S)/\Delta) \cdot [\Gamma:\Delta]^{-1}\\
\notag&=M'\lambda_S(\tlc \cap \Omega_S) \cdot \vol(Z) \cdot [\Gamma:\Delta]^{-1}&&\text{by~\eqref{eq:laS-Delta}}
\end{align}

Perhaps by enlarging $M$ to account for the effect of the central kernel of $\pi$, 
we have $\lambda_S(\tlc/\pi(\Gamma))=M'' \lambda_S(\tlg(\Q_S)/\Gamma)$ for some
$M''\in[1/M,M]$. 
Therefore, writing the definition of $\vol(\hat Z)$ in terms of the measure $\lambda_S$, we have
\begin{align}
\label{eq:vol-Zhat}\vol(\hat Z)&={\lambda_S(\tlc/\pi(\Gamma))} \cdot {\lambda_S(\tlc \cap \Omega_S)}^{-1}\\
\notag&=M''\lambda_S(\tlg(\Q_S)/\Gamma) \cdot {\lambda_S(\tlc \cap \Omega_S)}^{-1}\\
\notag&=\hat M\vol(Z) \cdot [\Gamma:\Delta]^{-1}&&\text{by~\eqref{eq:laS-Gamma}} 
\end{align}
where $\hat M\in[1/{M}^{2},{M}^2]$.

We now apply the discussion in~\cite[\S5.12]{EMMV}, see also~\cite{BorPr} and~\cite[Cor.~6.1]{Belo}, with $\Lambda=\Delta$ and $\tilde\Lambda=\Gamma$ --- note that the only role $S$ plays in the argument in~\cite[\S5.12]{EMMV} is for the use of the strong approximation theorem. It is proved in the proposition in~\cite[\S5.12]{EMMV}, see also the intermediate steps (5.10) and (5.13) in loc.\ cit., 
that there exists some $0<\consta\label{k:normal-ind}<1$ such that
\be\label{eq:index-bound}
[\Gamma:\Delta]\leq \vol(Z)^{\ref{k:normal-ind}},
\ee
provided that $\vol(Z)\gg1$.

In consequence,~\eqref{eq:vol-Zhat} and~\eqref{eq:index-bound} imply~\eqref{eq:vol-vol-semi} with $\ref{k:volZ-volZhat}=1-\ref{k:normal-ind}$ and $\ref{k:volZ-volZhat-M}=M^2$; this finishes the proof of~(2). 

\medskip

We now use the estimate in~(2) to prove~(1). First recall our Levi decomposition 
$\tlg=\tilde\bH\bR$; recall also that $\tlg(\Q)=\tilde\bH(\Q)\bR(\Q)$ and $\tlg(\Q_v)=\tilde\bH(\Q_v)\bR(\Q_v)$ for all $v\in\places$.

Define $\Gamma_H=(\pi_{|_{\tilde\bH}})^{-1}(\SL_d(\Z_S))$, and define $\Gamma_R$ similarly.
Following the above notation, put $\hat Z_H=\pi(\tilde\bH(\Q_S)/\Gamma_H)$ and $\hat Z_R=\pi(\bR(\Q_S)/\Gamma_R)$;
also put $\Lambda=\Gamma_H\Gamma_R\subset\Gamma$.

Let $\nu$ be the Haar measure on $\tlg(\Q_S)$ normalized so
that $\nu(\tlg(\Q_S)/\Lambda)=1$; similarly, let $\nu_H$ and $\nu_R$ be Haar measures on $\tilde\bH(\Q_S)$ and $\bR(\Q_S)$
normalized so that $\nu_H(\tilde\bH(\Q_S)/\Gamma_H)=1$ and $\nu_R(\bR(\Q_S)/\Gamma_R)=1$, respectively. 
In view of the product structure of $\Lambda$ and $\tlg(\Q_S)$, 
we may argue as in~\S\ref{sec:prod-str} and get that $\nu$ is given as the product of $\nu_H$ and $\nu_R$.

The above normalizations of $\nu_H$ and $\nu_R$ and the definitions of $\hat Z_H$ and $\hat Z_R$ imply that
$
\vol(\hat Z_H)=\nu_H\bigl(\pi(\tilde\bH(\Q_S))\cap \Omega_S\bigr)^{-1}
$ 
and $\vol(\hat Z_R)=\nu_R\bigl(\pi(\tilde\bR(\Q_S))\cap \Omega_S\bigr)^{-1}$.
Let us put 
\[
\vol_\nu(\hat Z):= \nu\bigl(\pi(\tlg(\Q_S))\cap \Omega_S\bigr)^{-1}.
\]

Using the product structure of $\nu$ again, we may now argue as in the proof of Proposition~\ref{prop:vol-comp} 
and get that
\be\label{eq:vol-nu-prod}
\bigl(\vol(\hat Z_H) \vol(\hat Z_R)\bigr)^{\star} \ll \height_\nu(\hat Z) \ll \bigl(\vol(\hat Z_H) \vol(\hat Z_R)\bigr)^{\star},
\ee
where $\height_\nu(\hat Z)=\max\{\height(\lcagr),\vol_\nu(\hat Z)\}$.

We now compare 
$\vol_\nu(\hat Z)$ and $\vol(\hat Z)$. Using the notation in the proof of Proposition~\ref{prop:vol-comp}, 
see in particular~\eqref{eq:lowerbound-p}, we have
the following.
\be\label{eq:Gamma-prod}
[\Gamma:\Lambda]\leq \prod_{q\not\in S}J_q\leq \prod_{\places}J_q\ll\height(\lcagr)^\star;
\ee
the first inequality follows from the definition of $\Lambda$, $\Gamma$, and $J_p$, 
the second inequality follows since $J_q\geq 1$ for all $q$, and the third inequality is~\eqref{eq:prod-Jp}.

Recall that $m_S$ denotes the Haar measure on $\tlg(\Q_S)$ normalized so that
$m_S(\tlg(\Q_S)/\Gamma)=1$.  We have
\[
\vol(\hat Z)=m_S\bigl(\pi(\tlg(\Q_S))\cap \Omega_S\bigr)^{-1}=\nu\bigl(\pi(\tlg(\Q_S))\cap \Omega_S\bigr)^{-1}[\Gamma:\Lambda]^{-1}.
\]
This, together with~\eqref{eq:Gamma-prod}, implies that
\[
\vol_\nu(\hat Z)\height(\lcagr)^{-\star}\ll \vol(\hat Z)\leq \vol_\nu(\hat Z),
\]
which in turn gives
\be\label{eq:vol-Zhat-nu}
\height_\nu(\hat Z)^\star\ll\height(\hat Z)\ll\height_\nu(\hat Z).
\ee

Now in view of part~(2), the upper and lower bound in~\eqref{eq:vol-nu-prod} are 
$\asymp\bigl(\vol(Z_H) \vol(Z_R)\bigr)^{\star}$. Moreover, 
Proposition~\ref{prop:vol-comp}(1) gives 
\be\label{eq:vol-comp-use}
\bigl(\vol(Z_H) \vol(Z_R)\bigr)^{\star}\ll\height(Z)\ll \bigl(\vol(Z_H) \vol(Z_R)\bigr)^{\star}.
\ee

The claim in part~(1) follows from~\eqref{eq:vol-nu-prod},~\eqref{eq:vol-Zhat-nu}, and~\eqref{eq:vol-comp-use}.
\end{proof}

We now turn to the consequences of Theorem~\ref{thm:adelic-red} in the $S$-arithmetic setting when applied to the datum $(\tlg,\pi)$.
%As before, for every $g\in \SL_d(\Q_S)$ we write $\tilde g=(g, (e))\in\SL_d(\A)$.
Recall that we defined
\[
\height_S(g) :=\max \left\{(\textstyle\prod_{S}\|gw\|)^{-1}:0\neq w\in\Z_S^d \right\}
\] 
for any $g\in\SL_d(\Q_S)$.

For any set $S$ of places and any $g \in \SL_d(\Q_S)$ (resp.~$g \in \tlg(\Q_S)$), we write $\tilde g :=(g,(e)_{q\not\in S})\in\SL_d(\A)$ (resp.~$\in \tlg(\A)$). 

\begin{lemma}\label{lem:prod-formula}
For any $g\in\SL_d(\Q_S)$ we have
\[
\height(\tilde g)=\height_S(g).
\]
\end{lemma}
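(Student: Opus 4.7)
The plan is to unwind both definitions and use $\Q^\times$-invariance of $\cfun$ (i.e.~the product formula) to identify the two maxima. First I would compute, for $w \in \Q^d \setminus \{0\}$,
\[
\cfun(\tilde g w) \;=\; \prod_{v \in S} \|g_v w\|_v \;\cdot\; \prod_{q \notin S} \|w\|_q,
\]
using that $(\tilde g w)_v = g_v w$ for $v \in S$ and $(\tilde g w)_q = w$ for $q \notin S$.

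For the inequality $\height_S(g) \leq \height(\tilde g)$, I would restrict the max in $\height(\tilde g)$ to $w \in \Z_S^d \setminus \{0\} \subset \Q^d \setminus \{0\}$. For such $w$ one has $\|w\|_q \leq 1$ for all $q \notin S$, so the displayed formula gives $\cfun(\tilde g w) \leq \prod_{v \in S} \|g_v w\|_v$ and hence $\cfun(\tilde g w)^{-1} \geq (\prod_{v \in S} \|g_v w\|_v)^{-1}$; taking the supremum yields the claim.

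For the reverse inequality, I would show that every $w \in \Q^d \setminus \{0\}$ can be rescaled to an $S$\emph{-primitive} representative. Concretely, setting $e_q := \min_j v_q(w_j) \in \Z$ for each $q \notin S$ (only finitely many are nonzero), the rational number $r := \prod_{q \notin S} q^{-e_q}$ satisfies $rw \in \Z_S^d$ and $\|rw\|_q = 1$ for all $q \notin S$. By the product formula, $\cfun$ is $\Q^\times$-invariant, so $\cfun(\tilde g w) = \cfun(\tilde g (rw)) = \prod_{v \in S}\|g_v(rw)\|_v$. Hence $\cfun(\tilde g w)^{-1} \leq \height_S(g)$, and taking the supremum over $w$ gives $\height(\tilde g) \leq \height_S(g)$.

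There is no real obstacle here; the only minor subtlety is verifying that the scaling $r$ above is a well-defined rational number (which follows from $e_q = 0$ for almost all $q$) and checking that it simultaneously lands $rw$ in $\Z_S^d$ and makes it $S$-primitive, which is immediate from the definition of $e_q$.
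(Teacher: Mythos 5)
Your proof is correct and follows essentially the same route as the paper: the paper also computes $\cfun(\tilde g w)$ componentwise, uses $\|w\|_q\leq 1$ for $w\in\Z_S^d$ and $q\notin S$ for one inequality, and for the other rescales $w$ by the product formula to a representative with unit norm outside $S$ (the paper uses a $\Z$-primitive vector on $\Q w$ where you use an $S$-primitive one, an immaterial difference).
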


\begin{proof}
This is a consequence of the product formula as we now explicate. 
For every $w\in\Q^d$, let $\bar w$ be a primitive integral vector on $\Q\cdot w$. 
First observe that
\begin{align}
\notag\cfun(\tilde gw)	&=\prod_\places\|\tilde g_v w\|_{v}=\prod_\places\|\tilde g_v \bar w\|_{v}					&&\text{by the product formula}\\
\notag			&=\prod_{S}\|g_v \bar w\|_{v}\prod_{q\not\in S}\|\bar w\|_{q}
					&& \tilde g_q=e,\; q \not\in S\\
\notag			&= \prod_{S}\|g_v \bar w\|_{v}%=\cfs(g\bar w)
					&&\text{$\bar w$ is primitive integral}.
\end{align}
This shows that $\height(\tilde g) \leq \height_S(g)$.

To see the reverse inequality, notice that if $w \in \Z_S^d$, then $\|w\|_q \leq 1$ for any $q \notin S$. This implies that
\[
\prod_{S}\|g_v w\|_{v} \geq \prod_{S}\|g_v w\|_{v} \prod_{q \notin S} \|w\| = \prod_{\Sigma}\|\tilde g_v w\|_{v} = \cfun (\tilde g w)
\]
and in turn that $\height_S(g) \leq \height(\tilde g)$. 
\end{proof}

In the following, we use the same notation for the diagonal embedding of elements of $\SL_d(\Q)$ in $\SL_d(\A)$ and in $\SL_d(\Q_S)$;
which embedding is relevant will be indicated by the context.

\begin{thm}\label{thm:eff-strong-app-direct}
There exists $\consta\label{k:local-exp-dir}$ so that the following holds. 
Let the notation be as in \S\ref{sec:vol-vol}.
There exists some 
\[
p\ll \bigl(\log\vol(\hat Z)\bigr)^2
\] 
with the following property. For any $g\in\tlg(\Q_S)$, there exists some $\gamma\in\tlg(\Q)$ so that
$\pi(\gamma)_q\in\SL_d(\Z_{q})$ for all $q\not\in S\cup\{p\}$ and
\[
|\pi(g\gamma)_v|\ll \height_S(\pi(g))^{\ref{k:local-exp-dir}}\disc(\hat Z)^{\ref{k:local-exp-dir}}
\]
for all $v\in S$. 
Moreover, if $p\not\in S$, then 
\[
|\pi(\gamma)_p|\ll \height_S(\pi(g))^{\ref{k:local-exp-dir}}\disc(\hat Z)^{\ref{k:local-exp-dir}}.
\]
\end{thm}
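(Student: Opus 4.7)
The plan is to apply Theorem~\ref{thm:adelic-red} to the datum $(\tlg,\pi)$ evaluated at the adelic lift $\tilde g=(g,(e)_{q\notin S})\in\tlg(\A)$ of $g$, then translate the resulting adelic bounds into $S$-arithmetic bounds via Lemma~\ref{lem:prod-formula} and Proposition~\ref{prop:vol-vol}. First, I would verify that $(\tlg,\pi)$ satisfies the hypotheses of \S\ref{sec:data}: $\tlg=\tilde\bH\ltimes\bR$ is connected and simply connected (as a semidirect product of a simply connected semisimple group and a unipotent group), its solvable radical is $\bR$ (hence unipotent), and $\pi$ has finite central kernel.

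Next, I would feed $\tilde g$ into Theorem~\ref{thm:adelic-red}, which produces a prime $p\ll(\log\disc(Z))^2$ and some $\gamma\in\tlg(\Q)$ with $\pi(\tilde g\gamma)_q\in\SL_d(\Z_q)$ for every $q\neq p$, together with $|\pi(\tilde g\gamma)_\infty|\ll(\log\disc(Z))^\star$ and $|\pi(\tilde g\gamma)_p|\ll\height(\pi(\tilde g))^\star\disc(Z)^\star$. Lemma~\ref{lem:prod-formula} gives $\height(\pi(\tilde g))=\height_S(\pi(g))$, and Proposition~\ref{prop:vol-vol}(1) lets me replace $\disc(Z)$ by $\disc(\hat Z)$ throughout, at the cost of adjusting exponents and absorbing $\log\disc(Z)\ll\disc(\hat Z)^\star$ into the height bound. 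This also gives $p\ll(\log\disc(\hat Z))^2$.

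The remaining step is to repackage the adelic statement into the $S$-arithmetic one, splitting by the location of~$p$. For $q\notin S$ we have $\tilde g_q=e$, so $(\tilde g\gamma)_q=\gamma_q$; this immediately yields $\pi(\gamma)_q\in\SL_d(\Z_q)$ for $q\notin S\cup\{p\}$, and if $p\notin S$ it yields $|\pi(\gamma)_p|=|\pi(\tilde g\gamma)_p|\ll\height_S(\pi(g))^\star\disc(\hat Z)^\star$. For $v\in S$ we have $(\tilde g\gamma)_v=(g\gamma)_v$: if $v$ is a finite place distinct from $p$, then $\pi(g\gamma)_v\in\SL_d(\Z_v)$ and so $|\pi(g\gamma)_v|=1$; if $v=\infty$ (automatically distinct from $p$), the adelic bound yields $|\pi(g\gamma)_\infty|\ll(\log\disc(Z))^\star\ll\disc(\hat Z)^\star$; and if $v=p\in S$, the adelic bound yields $|\pi(g\gamma)_p|\ll\height_S(\pi(g))^\star\disc(\hat Z)^\star$.

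I do not anticipate any genuine obstacle: the real content is already packaged in Theorem~\ref{thm:adelic-red} and Proposition~\ref{prop:vol-vol}, and what remains is bookkeeping. The mildly delicate point is making sure the two subcases ($p\in S$ versus $p\notin S$) are handled uniformly so that a single exponent $\ref{k:local-exp-dir}$ (depending only on $N=d$) dominates every estimate, which just amounts to taking the maximum of the finitely many exponents produced along the way.
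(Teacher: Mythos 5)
Your proposal is correct and follows essentially the same route as the paper: apply Theorem~\ref{thm:adelic-red} to the datum $(\tlg,\pi)$ at the adelic lift $\tilde g$, identify $\height(\tilde g)$ with $\height_S(\pi(g))$ via Lemma~\ref{lem:prod-formula}, pass from $\disc(Z)$ to $\disc(\hat Z)$ via Proposition~\ref{prop:vol-vol}(1), and read off the local components using $\tilde g_q=e$ for $q\notin S$. The extra bookkeeping you spell out (the case split on where $p$ sits and the verification that $(\tlg,\pi)$ satisfies the hypotheses of \S\ref{sec:data}) is exactly what the paper leaves implicit.
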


\begin{proof} 
In view of part~(1) of Proposition \ref{prop:vol-vol}, it suffices to prove the above estimates with $\disc(\hat Z)$ replaced by $\disc(Z)$.

In view of Lemma~\ref{lem:prod-formula} and of Theorem~\ref{thm:adelic-red} applied to $(\tlg, \pi)$ and $\tilde{g} \in \tlg(\A)$, there exists some $\gamma\in\tlg(\Q)$
so that $\pi(\tilde {g}\gamma)_v$ satisfies the estimate stated in the theorem for all $v\in S$, and $\pi(\tilde{g}\gamma)_q\in\SL_d(\Z_q)$ for all $q\notin \{\infty, p\}$.
Therefore, $\pi(\gamma)_q\in\SL_d(\Z_{q})$. 
  
Now if $p\not\in S$, then $\pi(\tilde g \gamma)_p=\pi(\gamma)_p$, and the desired estimate follows from Theorem~\ref{thm:adelic-red}. 
\end{proof}

We now state  and prove a reformulation of Theorem~\ref{thm:local-eff-diam-intro} using the above notation.

\begin{thm}\label{thm:eff-diam-local-semisimple}
Let the notation be as in \S\ref{sec:S-arith-ss}; further, assume that 
\begin{enumerate}
\item $\lcagr$ is semisimple, and
\item $\lcgr=\lcagr(\Q_S)$ is not compact. 
\end{enumerate}
There exist $\consta\label{k:local-ss-exp}$ and some $C=C(L)$ so that the following holds. 
For any $g\in \tlg(\Q_S)$ there exists some 
$\delta\in\Delta$, see~\eqref{eq:def-Delta}, so that 
\[
|\pi(g\delta)_v|\leq C\height_S(\pi(g))^{\ref{k:local-ss-exp}}\vol(\hat Z)^{\ref{k:local-ss-exp}}
\]
for all $v\in S$. 
\end{thm}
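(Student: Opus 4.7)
The plan is to reduce Theorem~\ref{thm:eff-diam-local-semisimple} to Theorem~\ref{thm:eff-strong-app-direct}, using strong approximation to handle the possibility that the auxiliary prime $p$ produced by the latter falls outside $S$. Since $\lcagr$ is semisimple, the unipotent radical $\bR$ is trivial and $\tlg = \tilde{\bH}$ is just the simply connected cover of $\lcagr$; this is the key structural input that makes strong approximation available.

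I would first apply Theorem~\ref{thm:eff-strong-app-direct} to $(\tlg, \pi, S, g)$, obtaining a prime $p \ll (\log\vol(\hat Z))^2$ and an element $\gamma \in \tlg(\Q)$ such that $\pi(\gamma)_q \in \SL_d(\Z_q)$ for all $q \notin S \cup \{p\}$, $|\pi(g\gamma)_v| \ll \height_S(\pi(g))^{\kappa}\disc(\hat Z)^\kappa$ for $v \in S$, and if $p \notin S$ also $|\pi(\gamma)_p| \ll \height_S(\pi(g))^\kappa \disc(\hat Z)^\kappa$. Because $\disc(\hat Z) \leq \height(\lcagr)\vol(\hat Z)$ and $\height(\lcagr)$ depends only on the datum, the factor $\height(\lcagr)^\kappa$ is absorbed into $C = C(L)$. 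In the easy case $p \in S$, the integrality conditions on $\pi(\gamma)$ hold for every $q \notin S$, so $\gamma$ projects to an element $\delta \in \Delta$ and the bound on $|\pi(g\gamma)_v|$ gives the theorem directly.

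The main obstacle lies in the case $p \notin S$, where $\gamma$ has bounded but not necessarily integral $p$-component. Here I would correct $\gamma$ by an element of the larger arithmetic lattice $\Delta_{S \cup \{p\}}$ (defined exactly as $\Delta$ but with $S$ replaced by $S \cup \{p\}$). Hypothesis~(2) together with $\tlg$ being simply connected semisimple implies strong approximation for $\tlg$; consequently $\Delta_{S \cup \{p\}}$ surjects onto the discrete quotient $\tlg(\Q_p)/K_p$. One thus finds $\epsilon \in \tlg(\Q)$ with $\pi(\epsilon)_q \in \SL_d(\Z_q)$ for $q \notin S \cup \{p\}$ and $\pi(\epsilon)_p \in \pi(\gamma)_p K_p$, so that $\delta := \gamma \epsilon^{-1}$ lies in $\Delta$ and $\pi(g\delta)_v = \pi(g\gamma)_v \pi(\epsilon)_v^{-1}$ for $v \in S$. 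The quantitative content --- the bound $|\pi(\epsilon)_v| \leq C(L)\height_S(\pi(g))^{\kappa'}\vol(\hat Z)^{\kappa'}$ for $v \in S$ --- would be extracted by reapplying Theorem~\ref{thm:eff-strong-app-direct}, this time to $(\tlg, \pi, S \cup \{p\})$ with input element $(e,y) \in \tlg(\Q_{S \cup \{p\}})$, where $y$ is a lift of $\pi(\gamma)_p$ through the finite-kernel isogeny $\pi$ (the finite kernel contributes only a factor depending on $\tlg$, hence on $L$). Any further iteration needed when the new auxiliary prime falls outside $S \cup \{p\}$ is controlled by the $(\log\vol)^2$-bound on successive primes, and all such group-level contributions are absorbed into the constant $C(L)$.
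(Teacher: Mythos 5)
There is a genuine gap at the heart of your reduction, in the case $p\notin S$. The correction step requires an element $\epsilon\in\tlg(\Q)$ that lies \emph{exactly} in the prescribed coset $K_p\pi(\gamma)_p$ at $p$ (so that $\delta_p\in K_p$ and hence $\delta\in\Delta$), is integral at all $q\notin S\cup\{p\}$, and has polynomially controlled size at the places of $S$. Reapplying Theorem~\ref{thm:eff-strong-app-direct} to $(e,y)\in\tlg(\Q_{S\cup\{p\}})$ does not produce such an element: that theorem only yields $\gamma'$ with $|\pi(y\gamma')_p|$ \emph{bounded} by a polynomial in the volume, and a bounded element of $\SL_d(\Q_p)$ is very far from being integral. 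So the resulting $\delta$ need not lie in $\Delta$, and the argument does not close. What you actually need here is a quantitative strong approximation statement at the level of the coset space $\tlg(\Q_p)/K_p$ --- given a coset containing an element of norm $\le B$, produce a rational point in that exact coset, integral outside $S\cup\{p\}$, of size $\mathrm{poly}(B,\vol(\hat Z))$ at $S$ --- and this is essentially equivalent to the theorem being proved; it is not a formal consequence of the adelic statement. (Two secondary issues: absorbing $\height(\lcagr)^{\kappa}$ into $C(L)$ is not legitimate, since $C$ may depend only on the topological group $\lcagr(\Q_S)$ and not on the $\Q$-form; in the semisimple case one instead uses $\height(\lcagr)\ll\vol(\hat Z)^{\star}$ via~\cite[App.~B]{EMMV} and Proposition~\ref{prop:vol-vol}. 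And the proposed iteration over new auxiliary primes is not obviously finite with a bounded number of steps; each step costs a polynomial factor, so without an argument that it terminates after $O_N(1)$ steps the final exponent would not depend only on $N$.)

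The paper avoids all of this by not routing through the adelic theorem at all in this case: it re-runs the proof of Theorem~\ref{sec:proof-semisimple} with the auxiliary prime replaced by a place $w\in S$ at which $\lcagr(\Q_w)$ is noncompact (this is exactly where hypothesis (2) enters). One fixes $\theta_w:\SL_2(\Q_w)\to\tlg(\Q_w)$, uses property $(\tau)$ in the form of~\cite[Thm.~1.11]{GMO} to get the decay of matrix coefficients~\eqref{eq:sobnormmc-w}, and obtains $g\gamma=h_1\tilde h h_2$ with $h\in\tlg(\Q_w)$ and $h_1,h_2\in\Omega_{\tlg,\eta}$. Since $\Omega_{\tlg,\eta}$ and $\tilde h$ are trivial at every $q\notin S$, one gets $\gamma_q\in K_q$ for all $q\notin S$ for free, so $\gamma$ itself gives the required $\delta\in\Delta$. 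The price is that $|u(t)|\ll C_1(1+|t|_w)^{\star}$ with $C_1$ depending on $\theta_w$, hence on $L$ --- which is precisely the source of the constant $C(L)$ in the statement. If you want to salvage your approach, the place to intervene is earlier: move the unipotent flow into $S$, rather than trying to repair its output afterwards.
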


\begin{proof}
In view of part~(2) of Proposition \ref{prop:vol-vol}, it suffices to prove the above estimates with $\disc(\hat Z)$ replaced by $\disc(Z)$.

As in the proof of Theorem~\ref{thm:eff-strong-app-direct}, we will deduce this theorem from an adelic statement. Let $w\in S$ be a place so that $\lcagr(\Q_w)$ is not compact. 
The required adelic statement here is an analogue of Theorem~\ref{sec:proof-semisimple} where $\G$ in the notation is replaced by $\tlg$
and the place $p$ is replaced by $w$.

Fix a $\Q_w$-representation (with finite kernel) $\theta_w:\SL_2(\Q_w)\to\tlg(\Q_w)$. 
We define the one-parameter unipotent subgroup~
\[
\text{$u:\Q_w\to \theta_w(\SL_2(\Q_w))$ by }
\; u(t)= \theta_w\left(\begin{pmatrix} 1&t\\0&1\end{pmatrix}\right).
\]
Note that 
\be\label{eq:u(t)-op-norm-w}
|u(t)|\ll C_1(1+|t|_w)^{\star}
\ee
for some $C_1$ depending on $\theta_w$ and hence on $L$.

Moreover, it follows from~\cite[Thm.~1.11]{GMO} that for all $f_1,f_2\in C_c^\infty(\tlg(\A)/\tlg(\Q))$ we have
\be\label{eq:sobnormmc-w}\Big| \langle u(t) f_1, f_2 \rangle_{\sigma} - 
	\int f_1\operatorname{d}\!\sigma \int\bar f_2\operatorname{d}\!\sigma \Big|
 \ll (1+|t|_p)^{-1/2\temp} \Sob(f_1) \Sob(f_2),
\ee
where $\Sob$ is a certain Sobolev norm and $\sigma$ is the probability $\tlg(\A)$-invariant measure on 
$\tlg(\A)/\tlg(\Q)$. %We refer to~\cite[App.~A]{EMMV} for the definition and the discussion of the Sobolev norm $\Sob$.

One now repeats the proof of Theorem~\ref{sec:proof-semisimple} replacing~\eqref{eq:u(t)-op-norm} with~\eqref{eq:u(t)-op-norm-w} and~\eqref{eq:sobnormmc} with~\eqref{eq:sobnormmc-w} to get the following. 
For any $g\in \tlg(\A)$, there exist $h_1,h_2 \in  \Omega_{\tlg,\eta}$ and 
$h\in \tlg(\bbq_w)$ with 
\[
|\pi(h)|\leq C\height(\pi(g))^{-\ref{k:ss-exp-inj-r}} \vol(Z)^{\ref{k:ss-exp-vol}}
\]
such that $g\tlg(\Q)=h_1\tilde hh_2\tlg(\Q)$; the constant $C$ depends on $L$ and $d$.

Let $g\in \tlg(\Q_S)$ and apply the above discussion to $\tilde g$. 
Then using the above and Lemma~\ref{lem:prod-formula}, there exists some $h\in\tlg(\bbq_w)$
with 
\[
|\pi(h)|\leq C\height_S(\pi(g))^{-\ref{k:ss-exp-inj-r}} \vol(Z)^{\ref{k:ss-exp-vol}},
\]
two elements $h_1, h_2 \in  \Omega_{\tlg,\eta}$, and some $\gamma\in\tlg(\Q)$ so that 
$
\tilde g \gamma=h_1\tilde hh_2.
$
If $q\not\in S$, then $({\pi(\tilde g}\gamma))_q=\pi(\gamma)_q\in\SL_d(\Z_q)$. 
The claim thus follows with $\delta=\gamma$ (thought of as an element in $\Delta$).
\end{proof}

\subsection{The adjoint action}\label{sec:adj-L}\label{sec:simply-conn}
We now turn to a version of Theorem~\ref{thm:eff-strong-app-direct} where $\height_S(g)$
is replaced by a height function defined using the adjoint representation of $L$ on $\mathfrak l$.

First, we need some more notation. 
For all $v\in \places$, let $\|\;\|_{v}$ denote the maximum norm on $\fsl_d(\Q_v)$ with respect to the standard basis.
Using this family of norms, we define $\height(\lcagr)$ analogously to what was done in \S\ref{sec:height}.

Fix a $\Z$-basis $\mathcal B=\{v_1,\ldots,v_N\}$ for $\Lie(\lcagr)\cap \fsl_d(\Z)$
with $\|v_i\|_\infty \ll \height(\lcagr)^\star$. 
Using this basis, we identify $\Lie(\lcagr)\cap \fsl_d(\Z)$ with $\Z^N$ and $\Lie(\lcagr)$ with $\Q^N$;
in this way, $\SL(\Lie(\lcagr))$ is identified with $\SL_N$.
We also let $\|\;\|_{\Bcal,v}$ denote the maximum norm with respect to $\Bcal$ on $\Lie(\lcagr)(\bbq_v)$. 
To avoid confusion, we will keep the index ${}_\Bcal$ for functions defined using these norms, e.g.~we write $\cfb$ and $\height_{\Bcal}$ (although after the above identifications, they correspond precisely to the notions introduced in \S \ref{sec:alpha1}).

Let 
$\Ad_{\lcagr}:\lcagr\to\SL_N$ 
denote the adjoint representation. 
We sometimes write $\Ad_L$ or simply $\Ad$ for $\Ad_\lcagr$ if there is no confusion.  
Put $\cfs(w) :=\prod_S\|w_v\|_v$ for all $w=(w_v)\in\fl$.

Let $\fl(\Z_S):=\fl\cap \fsl_d(\Z_S)$;
note that $\fl(\Z_S)$ is invariant under the adjoint action of $\lcgr\cap \SL_d(\Z_S)$.
For every $g\in\lcgr$, we define  
\[
\hl(g):=\max\{\cfs(\Ad(g)w)^{-1}:0\neq w\in\fl(\Z_S)\}.
\]
The function $\hl$ is $\lcgr\cap \SL_d(\Z_S)$-invariant, so it defines a function on 
$L/\lcgr\cap \SL_d(\Z_S)$ which we continue to denote by $\hl$. 

As before, we put $|g|=\max\{\|g\|,\|g^{-1}\|\}$ for all $g\in\SL_N(\Q_v)$, where $\|\;\|$ is the operator norm on $\SL_N(\Q_v)$
with respect to some fixed norm on $\Q_v^N$, say the max norm with respect to the standard basis.

Let $\bR'=\Ad_\lcagr(\bR)$.
Put $\G=\tilde\bH \ltimes\bR'$, where the action of $\tilde\bH$ on $\bR'$ factors through the action of $\Ad_\lcagr (\bH)$ via 
$\Ad_\lcagr \circ \pi'$, where $\pi':\tilde\bH\to\bH$ is the natural covering map. 

The adjoint action on $\lcagr$ induces a homomorphism $\iota:\G\to\SL_N$ with finite central kernel, given by 
\[
\iota(g_{\tilde \bH} g_\bR') = \Ad_\lcagr (\pi'(g_{\tilde \bH})) g_\bR'.
\] 
In accordance to \S \ref{sec:data}, we set $Y:=\iota(\G(\A)/\G(\Q)) \subset \SL_N(\A) / \SL_N(\Q)$. Define $\hat Y$ as in \S\ref{sec:vol-vol} by replacing the pair $(\tlg,\pi)$ with $(\G,\iota)$ and $\SL_d$ by $\SL_N$; similarly fix an open subset $\Omega_S\subset\SL_N(\Q_S)$, and define $\vol(\hat Y)$ using $\Omega_S\subset\SL_N(\Q_S)$. We put 
\[
\text{$\height_\Bcal(Y)=\max\{\height(\lcagr),\vol(Y)\}\;$ and $\;\height_\Bcal(\hat Y)=\max\{\height(\lcagr),\vol(\hat Y)\}$}.
\]

Additionally, there is an epimorphism $\varphi: \tlg \to \bG$ given by $g_{\tilde \bH}g_{\bR} \mapsto g_{\tilde \bH} \Ad_\lcagr (g_{\bR})$, whose kernel is contained in $\rm Z(\bR)$, hence is unipotent. As was argued in \S\ref{sec:Levi-iota-G}, this implies that $\tlg(\Q)$ surjects onto $\G(\Q)$, and $\tlg(\Q_v)$ surjects onto $\G(\Q_v)$
for all $v \in \places$.

\begin{center} \begin{tikzcd}
\tlg \arrow[r, "\pi"] \arrow[d, "\varphi"] & \lcagr \arrow[d, "\Ad_\lcagr"] \\
\bG \arrow[r,"\iota"]& \SL_N
\end{tikzcd} \end{center}

As before, for every $g\in\tlg(\Q_S)$ we write $\tilde g=(g,(e)_{q\not\in S})\in\tlg(\A)$ and we write  
\be\label{eq:hat-g}
\hat g=\iota(\varphi(\tilde g))=(\Ad_\lcagr (\pi(g)),(e)_{p\not\in S})\in\iota(\G(\A)).
\ee
%We similarly write $\tilde g=(g,(e)_{q\not\in S}) \in \SL_d(\A)$ for all $g\in\SL_d(\Q_S)$.
In what follows, the notation will confound the implicit diagonal embeddings of $\tlg(\Q)$ in $\tlg(\Q_S)$ and in $\tlg(\A)$. Which embedding is relevant will be indicated by the context.

\begin{lemma}\label{lem:ht-htL}
There exists some $\consta\label{k:ht-htL-htL}$ %$\consta\label{k:ht-htL-l}$, and $\consta\label{k:ht-htL-u}$ 
so that the following holds. For any $g \in L$ we have 
\[
\height(\lcagr)^{-\ref{k:ht-htL-htL}} \hl(g) \ll \height_{\Bcal}( (\Ad_\lcagr (g), (e)_{p \notin S})) \ll \height(\lcagr)^{\ref{k:ht-htL-htL}}\hl(g).
\]
\end{lemma}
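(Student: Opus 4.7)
The plan is to reduce both height functions to the same maximum over a common set of primitive vectors, then compare the two norms $\|\cdot\|_{\Bcal,v}$ and $\|\cdot\|_v$ on $\fl(\Q_v)$ factor-by-factor.

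First I would apply the product formula to rewrite $\height_\Bcal(\hat g)$. Since $\hat g_v = e$ for $v \notin S$ by the definition~\eqref{eq:hat-g}, and since we may scale $w \in \Q^N$ to a primitive integral vector $\bar w \in \Z^N$ without changing $\cfb(\hat g w)$, we get
\[
\cfb(\hat g \bar w) \;=\; \prod_{v \in S} \|\hat g_v \bar w\|_{\Bcal,v} \cdot \prod_{p \notin S} \|\bar w\|_{\Bcal,p} \;=\; \prod_{v \in S} \|\hat g_v \bar w\|_{\Bcal,v},
\]
the last equality because $\bar w$ is primitive in $\Z^N$ under the $\Bcal$-identification. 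Crucially, because $\Bcal$ is a $\Z$-basis of $\fl(\Z) = \fl \cap \fsl_d(\Z)$, the identification $\Q^N \leftrightarrow \fl$ via $\Bcal$ restricts to an isomorphism $\Z^N \leftrightarrow \fl(\Z)$, so primitive integral $\bar w \in \Z^N$ corresponds bijectively to a primitive $w \in \fl(\Z)$ (a generator of $\Q w \cap \fl(\Z)$). Under this correspondence, $\hat g_v \bar w$ (read in $\Bcal$) corresponds to $\Ad_\lcagr(g_v) w \in \fl(\Q_v)$.

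Next I would prove the norm comparison $\height(\lcagr)^{-\star} \|u\|_v \leq \|u\|_{\Bcal,v} \leq \height(\lcagr)^{\star} \|u\|_v$ for all $v$ and all $u \in \fl(\Q_v)$. The upper bound on $\|u\|_v$ is immediate from writing $u = \sum c_i v_i$ and using $\|v_i\|_v \leq \|v_i\|_\infty \ll \height(\lcagr)^\star$ at the infinite place and $\|v_i\|_v \leq 1$ at finite places. The reverse bound uses Cramer's rule applied to a nonsingular $N \times N$ submatrix $M_0$ of the change-of-basis matrix from $\Bcal$ to the standard basis of $\fsl_d$: the entries of $M_0$ are integers of size $\ll \height(\lcagr)^\star$, and $\det M_0$ is a nonzero integer, hence $|\det M_0|_v \geq |\det M_0|_\infty^{-1} \geq \height(\lcagr)^{-\star}$ at every place by the product formula. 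Multiplying over the finite set $S$ yields $\cfb(\hat g \bar w) \asymp \height(\lcagr)^{\pm\star} \cfs(\Ad_\lcagr(g) w)$.

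The last step is to check that the maximum defining $\hl(g)$ is attained on the same set of primitive integer vectors. For any nonzero $w \in \fl(\Z_S)$ lying on a $\Q$-line $\ell$, the intersection $\ell \cap \fl(\Z_S)$ is $\Z_S \cdot w_\ell$, where $w_\ell$ is the primitive generator of $\ell \cap \fl(\Z)$; and for $r \in \Z_S \setminus \{0\}$, the product formula yields $\prod_{v \in S} |r|_v = \prod_{p \notin S} |r|_p^{-1} \geq 1$. Hence $\cfs(\Ad(g) (r w_\ell))^{-1} \leq \cfs(\Ad(g) w_\ell)^{-1}$, so
\[
\hl(g) = \max_{\ell} \cfs(\Ad_\lcagr(g) w_\ell)^{-1},
\]
where $\ell$ ranges over $\Q$-lines in $\fl$ and $w_\ell$ is the primitive generator. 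Combining this with the factor-by-factor comparison from the previous step and taking the maximum over lines yields both inequalities simultaneously, with $\ref{k:ht-htL-htL}$ depending only on $N$. The only subtle point is ensuring the two notions of primitivity agree, which is exactly what the observation $\Z^N \leftrightarrow \fl(\Z)$ provides; after that, everything reduces to the archimedean/$p$-adic comparison of two norms on the same finite-dimensional vector space.
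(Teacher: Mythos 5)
Your strategy is the same as the paper's: use the product formula to reduce both height functions to a maximum over primitive integral vectors on rational lines, then compare $\|\cdot\|_{\Bcal,v}$ with $\|\cdot\|_v$ place by place with a loss of $\height(\lcagr)^{\pm\star}$. The one step that does not work as literally written is ``multiplying over the finite set $S$'': a loss of $\height(\lcagr)^{\pm\star}$ at \emph{each} $v\in S$ compounds to $\height(\lcagr)^{\pm\star|S|}$, and $|S|$ is not bounded in terms of $N$, whereas the exponent $\ref{k:ht-htL-htL}$ and the implied constants must depend on $N$ alone. The repair is to make the loss global rather than per-place, which is exactly what the paper does and what your own ingredients already permit: in the triangle-inequality direction the comparison constant at a finite place is $\max_i\|v_i\|_v\leq 1$ because $v_i\in\fsl_d(\Z)$, so the product over $S$ only picks up the single archimedean factor $\max_i\|v_i\|_\infty\ll\height(\lcagr)^\star$; in the Cramer's-rule direction the finite-place constant is essentially $|\det M_0|_v^{-1}$ (the cofactors being $v$-integral), and one bounds the \emph{product} $\prod_{v\in S\setminus\{\infty\}}|\det M_0|_v^{-1}\leq\prod_{p}|\det M_0|_p^{-1}=|\det M_0|_\infty\ll\height(\lcagr)^\star$ by the product formula, rather than invoking the per-place bound $|\det M_0|_v\geq\height(\lcagr)^{-\star}$ at every $v\in S$ separately. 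With this adjustment your argument closes and coincides with the paper's proof; your reduction of $\hl(g)$ to primitive generators of rational lines is correct, and the paper reaches the same end slightly more asymmetrically (bounding the max over primitive vectors by the max over $\Z_S^N$ in one direction, and using $\|w\|_{\Bcal,q}\leq 1$ for $w\in\Z_S^N$, $q\notin S$ in the other).
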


\begin{proof}
For $g \in L$, set $\hat g :=  (\Ad_\lcagr (g), (e)_{p \notin S}) \in \SL_N(\A)$. For any $w \in \Q^N$, let $\bar w$ be a primitive integral vector on $\Q\cdot w$. First, observe that
\begin{align}
\notag \cfb( \hat g w)	&=\prod_\places\|\hat g_v w\|_{\Bcal, v}=\prod_\places\|\hat g_v \bar w\|_{\Bcal, v}			&&\text{by the product formula}\\
\notag			&=\prod_{S}\|\Ad(g)_v \bar w\|_{\Bcal, v} \cdot \prod_{p\not\in S}\| \bar w\|_{\Bcal, p}		\\
\notag			&= \prod_{S}\|\Ad(g)_v \bar w\|_{\Bcal,v}
					&&\text{since $\bar w$ is primitive integral}\\
\notag			&\gg \prod_{S}\|\Ad(g)_v \bar w\|_{v} \cdot \prod_{S} (\max_i \| v_i \|_v)^{-1} 				&&\| \; \|_{v} \ll (\max_i \| v_i \|_v) \cdot \| \; \|_{\Bcal,v} \\
\notag			&\geq \prod_{S}\|\Ad(g)_v \bar w\|_{v} \cdot (\max_i \| v_i \|_\infty)^{-1} 					&&\text{since $v_i \in \fsl_d(\Z)$} \\
\label{eq:cfb-cfs-again}&\gg \height(\lcagr)^{-\star} \cfs(\Ad(g) \bar w)
					&&\text{because $\|v_i\|_\infty \ll \height(\lcagr)^\star$}.
\end{align}
From this, it follows that
\begin{align}
\notag\height_{\Bcal}(\hat g)&=\max\{\cfb(\hat gw)^{-1}:0\neq w\in\Q^N\}&&\text{see~\eqref{eq:def-ht-g}}\\
\notag&\ll\height(\bL)^{\star}\max\{\cfs(\Ad (g) \bar w)^{-1}:0\neq w\in\Q^N\}&&\text{by~\eqref{eq:cfb-cfs-again}} \\
\notag&\leq \height(\bL)^{\star}\max\{\cfs(\Ad(g)w)^{-1}:0\neq w\in\Z_S^N\}\\
\notag&=\height(\bL)^{\star}\hl(g).
\end{align}

Similarly, since for every $w\in \Z_S^N$ and all $q \notin S$ we have $\|w\|_{\Bcal, q}\leq 1$, we get
\begin{align*}
\cfs(\Ad(g)w) 	&= \prod_{S} \| \Ad(g)_v w \|_v \\
			&\gg \height(\bL)^{-\star} \prod_{S} \| \Ad(g)_v w \|_{\Bcal, v}\\
			&\geq \height(\bL)^{-\star} \prod_{S} \| \Ad(g)_v w \|_{\Bcal, v} \prod_{q \notin S} \|w\|_q \\
			&= \height(\bL)^{-\star} \cfun_\Bcal(\hat g w).
\end{align*}
This implies the lower bound $\hl(g) \ll \height(\bL)^\star \height_\Bcal(\hat g)$. 
\end{proof}

\begin{thm}\label{thm:eff-strong-app}
There exists some $\consta\label{k:local-exp-z}\label{k:local-exp}$ so that the following holds. 
Let $\lcagr$ be any $\Q$-subgroup of $\SL_d$ with $\rad(\lcagr)=\rad_u(\lcagr)$ and let $\tilde \bL$, $(\bG, \iota)$, etc.~be as in \S \ref{sec:adj-L}. There exists some prime $p\ll \bigl(\log\disc_{\Bcal}(\hat Y)\bigr)^2$ with the following property. 
For any $g\in\tlg(\Q_S)$, there exists some $\gamma\in\tlg(\Q)$ so that $\iota(\varphi(\gamma))_q\in\SL_N(\Z_{q})$ for all $q\not\in S\cup\{p\}$ and
\[
|\iota(\varphi(g\gamma))_v|\ll \hl(\pi(g))^{\ref{k:local-exp}}\disc_{\Bcal}(\hat Y)^{\ref{k:local-exp}}
\]
for all $v\in S$.
Moreover, if $p\not\in S$, then 
\[
|\iota(\varphi(\gamma))_p|\ll \hl(\pi(g))^{\ref{k:local-exp}}\disc_{\Bcal}(\hat Y)^{\ref{k:local-exp}}.
\]
\end{thm}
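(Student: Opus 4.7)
The plan is to reduce Theorem~\ref{thm:eff-strong-app} to an application of Theorem~\ref{thm:adelic-red} to the algebraic datum $(\G,\iota)$, with $\SL_N=\SL(\Lie(\lcagr))$ identified via the basis $\Bcal$ so that the adelic height function of Theorem~\ref{thm:adelic-red} coincides with $\height_\Bcal$. Given $g\in\tilde\bL(\Q_S)$, I consider the adelic element $\varphi(\tilde g)\in\G(\A)$; recall from~\eqref{eq:hat-g} that $\iota(\varphi(\tilde g))=\hat g$. Theorem~\ref{thm:adelic-red} applied to $(\G,\iota)$ and $\varphi(\tilde g)$ then yields a prime $p\ll(\log\disc(Y))^2$ and an element $\gamma'\in\G(\Q)$ such that $\iota(\varphi(\tilde g)\gamma')_q\in\SL_N(\Z_q)$ for all primes $q\neq p$, $|\iota(\varphi(\tilde g)\gamma')_\infty|\ll(\log\disc(Y))^\star$, and $|\iota(\varphi(\tilde g)\gamma')_p|\ll\height_\Bcal(\hat g)^\star\disc(Y)^\star$.

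The next step is to translate the right-hand sides into $\hl(\pi(g))$ and $\disc_\Bcal(\hat Y)$. By Lemma~\ref{lem:ht-htL} one has $\height_\Bcal(\hat g)\ll\height(\lcagr)^\star\hl(\pi(g))$. Moreover, Proposition~\ref{prop:vol-vol}(1), applied to $\iota(\G)\subset\SL_N$ (for which $\G$ already plays the role of its simply-connected covering, since $\tilde\bH$ is simply connected and $\bR'$ is unipotent), gives $\disc(Y)\ll\disc_\Bcal(\hat Y)^\star$. Absorbing the $\height(\lcagr)$ factors into $\disc_\Bcal(\hat Y)$ via~\eqref{eq:def-discY}, each of the three estimates above becomes a bound of the form $\hl(\pi(g))^\star\disc_\Bcal(\hat Y)^\star$, and $p\ll(\log\disc_\Bcal(\hat Y))^2$.

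To return to $\tilde\bL$, I lift $\gamma'$ to an element $\gamma\in\tilde\bL(\Q)$ via $\varphi$. Since $\ker\varphi\subset Z(\bR)\subset\bR$ is unipotent, the vanishing-of-$H^1(\Q,\text{unipotent})$ argument recalled in \S\ref{sec:Levi-iota-G} shows that $\varphi\colon\tilde\bL(\Q)\to\G(\Q)$ is surjective, so such a $\gamma$ exists. For $v\in S$ one has $\tilde g_v=g$, whence $\iota(\varphi(\tilde g\gamma))_v=\iota(\varphi(g\gamma))_v$; the claimed bound at $v=\infty$, and at $v=p$ when $p\in S$, then follows directly. For $v\in S$ with $v\notin\{\infty,p\}$, the integrality statement at $v\neq p$ gives $|\iota(\varphi(g\gamma))_v|\leq 1$, which is trivially within the required estimate. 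When $p\notin S$, the fact that $\tilde g_p=e$ identifies $\iota(\varphi(\tilde g\gamma))_p$ with $\iota(\varphi(\gamma))_p$, yielding the separate bound on $|\iota(\varphi(\gamma))_p|$. Finally, for $q\notin S\cup\{p\}$ we have $\tilde g_q=e$, so the integrality of $\iota(\varphi(\tilde g\gamma))_q$ translates to $\iota(\varphi(\gamma))_q\in\SL_N(\Z_q)$.

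The essential analytic input---the spectral gap argument of \S\ref{sec:proof} and the volume comparisons of \S\ref{sec:vol-vol}---has already been carried out, so the remaining work is purely organizational. The main points I expect to have to verify with some care are that Theorem~\ref{thm:adelic-red} is applied to $(\G,\iota)$ with the correct identification of $\SL_N$ via $\Bcal$ so that its abstract height matches $\height_\Bcal$ (a prerequisite for Lemma~\ref{lem:ht-htL} to be of any use), and that Proposition~\ref{prop:vol-vol}(1) legitimately applies to $(\G,\iota)$ in the form needed to compare $\disc(Y)$ and $\disc_\Bcal(\hat Y)$. Once these two compatibilities are in place, the proof is a straightforward place-by-place unpacking of the adelic statement.
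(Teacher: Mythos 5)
Your argument is correct, but it takes a genuinely different route from the paper. You treat Theorem~\ref{thm:adelic-red} as a black box: apply it to the datum $(\G,\iota)$ and the element $\varphi(\tilde g)\in\G(\A)$, convert $\height_\Bcal(\hat g)$ into $\hl(\pi(g))$ via Lemma~\ref{lem:ht-htL}, convert $\disc_\Bcal(Y)$ into $\disc_\Bcal(\hat Y)$ via Proposition~\ref{prop:vol-vol}(1), lift $\gamma'$ through the surjection $\varphi\colon\tlg(\Q)\to\G(\Q)$ (which the paper justifies in \S\ref{sec:adj-L} since $\ker\varphi$ is unipotent), and unpack place by place using $\tilde g_q=e$ for $q\notin S$. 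All of these steps check out, and the two compatibilities you flag (that $\height_\Bcal$ is literally the height of \S\ref{sec:alpha1} after the identification by $\Bcal$, and that $(\G,\iota)$ is an admissible datum to which Theorem~\ref{thm:adelic-red} and Proposition~\ref{prop:vol-vol} apply) are exactly the ones the paper sets up in \S\ref{sec:adj-L}. The paper instead re-runs the semisimple/unipotent decomposition internally: it writes $g=g_Hg_R$, applies the semisimple Theorem~\ref{sec:proof-semisimple} to $(\tilde\bH,\iota|_{\tilde\bH})$ to get $\gamma_0\in\tilde\bH(\Q)$, and applies Lemma~\ref{lem:unip-case} (at the place $\infty$ rather than at $p$) to get $\gamma_1\in\bR(\Q)$. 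What that buys is the refined Theorem~\ref{thm:eff-strong-app-revisited}, with $\gamma=\gamma_0\gamma_1$ split into its Levi and unipotent parts and separate integrality/size control on $h_H$ and $h_R$; this refinement is the actual input to Lemma~\ref{cor:htLg:htH-gH-lower} and hence to \S\ref{sec:Mahler-Meas}. Your shorter proof establishes Theorem~\ref{thm:eff-strong-app} itself but would not, on its own, supply that stronger variant.
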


\begin{proof}
In view of part~(1) of proposition in \S\ref{prop:vol-vol}, it suffices to prove the above estimates with $\disc_\Bcal(\hat Y)$ replaced by $\disc_\Bcal(Y)$.

Let $g \in \tilde \bL(\Q_S)$ and write $g=g_Hg_R$ where $g_H\in\tilde\bH(\Q_S)$ and $g_R\in\bR(\Q_S)$.

In virtue of \eqref{eq:str-cont}, we have that $\height_\Bcal (\ad(\Lie (\bL))) \ll \height(\bL)^\star$. Since $\Lie (\iota (\bG))$ $= \Lie (\Ad (\bL)) = \ad(\Lie (\bL))$, this means that $\height_\Bcal (\bG) \ll \height(\bL)^\star$.
Lemma~\ref{lem:height-g-g-H} thus yields
\be\label{eq:hatg-hatgH}
\height_{\Bcal}(\bL)^{-\star} \height_{\Bcal}(\hat g)^\star\ll\height_{\Bcal}(\hat g_H)\ll\height(\bL)^\star\height_{\Bcal}(\hat g)^\star.
\ee

As before, we write $Y_H=\iota(\tilde\bH(\A)/\tilde\bH(\Q))$.
Let $p\ll\bigl(\log\vol_\Bcal(Y_H)\bigr)^2$ be as in Theorem~\ref{sec:proof-semisimple} applied to $(\tilde\bH, \iota_{|_{\tilde \bH}})$, so that (combined with Lemma \ref{lem:ht-injr}) we have the following. 
There exists some $\gamma_0\in\tilde\bH(\Q)$ so that if we put $h' = (h'_S, h'_p, (h'_q)_{q \notin S \cup \{p\}}) :=\tilde g_H\gamma_0$, then $\iota(\varphi(h'))_q\in\SL_N(\Z_q)$ for all $q\not\in\{\infty, p\}$, $|\iota(\varphi(h'))_\infty|\ll_{\Bcal}1\ll\height(\lcagr)^\star$, and 
\begin{align}
\notag |\iota(\varphi(h'))_p| &\ll_{\Bcal}  \height_{\Bcal}(\hat g_H)^\star\vol_{\Bcal}(Y_H)^\star\\
\notag				&\ll \height(\lcagr)^\star \height_\Bcal(\hat g)^\star\vol_{\Bcal}(Y_H)^\star				&&\text{by~\eqref{eq:hatg-hatgH}}\\
\notag				&\ll\height(\lcagr)^\star\hl(\pi(g))^\star\vol_{\Bcal}(Y_H)^\star						&&\text{by Lemma~\ref{lem:ht-htL}}\\
\label{eq:hp-bd-local-semi}&\ll\hl(\pi(g))^\star\height_{\Bcal}(Y)^\star						&&\text{by Proposition~\ref{prop:vol-comp}}.
\end{align}
Also by Proposition~\ref{prop:vol-comp}, we have 
\be\label{eq:local-p-est}
p\ll \bigl(\log\vol(Y_H)\bigr)^2\ll \bigl(\log\height_{\Bcal}(Y)\bigr)^\star. 
\ee

Apply Lemma~\ref{lem:unip-case} with the set of places $\{\infty\}$ and $v=\infty$ to the element $\gamma_0^{-1}\tilde g_R\gamma_0$ to obtain some $\gamma_1\in\bR(\Q)$ such that 
\begin{itemize}
\item[(a)] $\pi(\gamma_0^{-1}\tilde g_R\gamma_0\gamma_1)\in \SL_d(\Z_q)$ for all primes $q$, and
\item[(b)] $|\pi((\gamma_0^{-1}\tilde g_R\gamma_0\gamma_1)_\infty)|\ll \height(\bR)^\star\ll \height(\lcagr)^\star$. 
%\item[(c)] $|\pi((\gamma_0^{-1}\tilde g_R\gamma_0\gamma_1)_p)|\ll \height(\lcagr)^\star p^\star\height(\bR)^\star\ll p^\star\height(\lcagr)^\star$.
\end{itemize}
Since $\pi(\gamma_0^{-1}\tilde g_R\gamma_0)_q=e$ for all $q\not\in S$, 
item (a) above implies that $\pi(\gamma_1)_q\in\SL_d(\Z_q)$ for all $q\not\in S$.

\medskip

Put $\gamma=\gamma_0\gamma_1\in\tlg(\Q)$ and write
\[
h= (h_S, h_p, (h_q)):=\tilde g\gamma=\tilde g_H\gamma_0(\gamma_0^{-1}\tilde g_R\gamma_0\gamma_1) = h'(\gamma_0^{-1}\tilde g_R\gamma_0\gamma_1). 
\]
The above estimates then imply that
\begin{enumerate}
\item By (a) and $\iota(\varphi(h_q'))\in \SL_N(\Z_q)$ we have $\iota(\varphi(h_q))\in\SL_N(\Z_q)$ for all $q \notin \{\infty, p\}$.  
%here again we are using the basis $\Bcal$ to define $\SL_N(\Z_q)$.

\item By (b) and $|\iota(\varphi(h_\infty'))|\ll \height(\lcagr)^\star$ we have
\begin{align*}
|\iota(\varphi(h_\infty))| 	&\ll \height(\lcagr)^\star |\iota(\varphi((\gamma_0^{-1} \tilde g_R\gamma_0\gamma_1)_\infty))| = \height(\lcagr)^\star |\Ad(\pi((\gamma_0^{-1} \tilde g_R\gamma_0\gamma_1)_\infty))| \\
 					&\ll_{\Bcal} \height(\lcagr)^\star |\pi((\gamma_0^{-1} \tilde g_R\gamma_0\gamma_1)_\infty)| \ll \height(\lcagr)^\star.
\end{align*}

\item For the prime $p$ we have 
\begin{align*}
|\iota(\varphi(h_p))| 	&\leq |\iota(\varphi(h'_p))| \cdot |\Ad(\pi(\gamma_0^{-1}\tilde g_R\gamma_0\gamma_1))_p)| \\
				&\ll_{\Bcal} |\iota(\varphi(h'_p))| \cdot |\pi((\gamma_0^{-1}\tilde g_R\gamma_0\gamma_1)_p)| \\
				&\ll \height(\bL)^\star |\iota(\varphi(h'_p))|
					&&\text{by (a)}\\
				&\ll \hl(\pi(g))^\star\disc_{\Bcal}(Y)^\star
					&&\text{by~\eqref{eq:hp-bd-local-semi}.}
\end{align*}
\end{enumerate}

Let now $q\not\in S \cup \{p\}$. Then $\hat g_q = \iota(\varphi(\tilde g_q))=e$ and hence we have $\iota(\varphi(\gamma_q)) = \iota(\varphi(h_q)) \in\SL_N(\Z_q)$ by~(1). This means $\iota(\varphi(\gamma))\in\SL_N(\Z_{S\cup\{p\}})$. 

Lastly, if $p\not\in S$, we have again $\hat g_p=e$, therefore $\iota(\varphi(\gamma_p)) = \iota(\varphi(h_p))$ and (3) above gives the desired bound on $\iota(\varphi(\gamma_p))$. 
\end{proof}

The above proof actually gives the following stronger statement.

\begin{thm}\label{thm:eff-strong-app-revisited}
There exists some $\ref{k:local-exp-z}$ so that the following holds. 
Let $\lcagr$ be any $\Q$-subgroup of $\SL_d$ with $\rad(\lcagr)=\rad_u(\lcagr)$ and let $\tilde \bL$, $(\bG, \iota)$, etc.~be as in \S \ref{sec:adj-L}. There exists some prime $p\ll \bigl(\log\disc_{\Bcal}(\hat Y)\bigr)^2$ with the following property. 
Let $g\in\tlg(\Q_S)$ and write $g=g_Hg_R$ where $g_H\in\tilde\bH(\Q_S)$ and $g_R\in\bR(\Q_S)$.
There exists some $\gamma_0\in\tilde \bH(\Q)$ and some $\gamma_1\in \bR(\Q)$ with  
\begin{itemize}
\item $\iota(\varphi(\gamma_0))_q\in\SL_N(\Z_q)$ for all $q\not\in S\cup\{p\}$
\item $\pi(\gamma_1)_q\in\SL_d(\Z_q)$ for all $q\not\in S$
\item if $p\not\in S$, then $|\iota(\varphi(\gamma_0 \gamma_1))_p|\ll \hl(g)^{\ref{k:local-exp}}\disc_{\Bcal}(\hat Y)^{\ref{k:local-exp}}$,
\end{itemize}
so that if we write $(g,(e)_{\not\in S})\gamma_0\gamma_1=h_Hh_R$, where $h_H\in\tilde \bH(\A)$ and $h_R\in\bR(\A)$, then we have the following estimates.
\begin{enumerate}
\item $\pi(h_R)_q\in \SL_d(\Z_q)$ for all primes $q$,
\item $|\pi(h_R)_\infty|\ll  \height(\lcagr)^{\ref{k:local-exp-z}}$,
\item $\iota(\varphi(h_H))_q\in\SL_N(\Z_q)$ for all $q\not\in\{\infty, p\}$, 
\item $|\iota(\varphi(h_H))_\infty|\ll \height(\lcagr)^{\ref{k:local-exp-z}}$, and
\item $|\iota(\varphi(h_H))_p|\ll \hl(g)^{\ref{k:local-exp-z}}\disc_{\Bcal}(\hat Y)^{\ref{k:local-exp}}$.
\end{enumerate}
\end{thm}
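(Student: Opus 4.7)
The strategy is to revisit the proof of Theorem~\ref{thm:eff-strong-app} and simply extract finer information from the adelic Levi factorization of the product $(g,(e)_{q\notin S})\gamma_0\gamma_1$. Writing $\tilde g=\tilde g_H\tilde g_R$ with $\tilde g_H\in\tilde\bH(\A)$ and $\tilde g_R\in\bR(\A)$, and using that $\gamma_0\in\tilde\bH(\Q)$ normalizes $\bR$ via the semidirect product action, one has
\[
(g,(e)_{q\notin S})\gamma_0\gamma_1 \;=\; (\tilde g_H\gamma_0)\cdot(\gamma_0^{-1}\tilde g_R\gamma_0\gamma_1).
\]
This suggests identifying $h_H:=\tilde g_H\gamma_0\in\tilde\bH(\A)$ and $h_R:=\gamma_0^{-1}\tilde g_R\gamma_0\gamma_1\in\bR(\A)$, so the task reduces to producing $\gamma_0$ and $\gamma_1$ for which these two factors satisfy the five displayed estimates.

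The element $\gamma_0$ would be produced by applying Theorem~\ref{sec:proof-semisimple} to the semisimple group $\tilde\bH$ with the embedding $\Ad_\bL\circ\pi':\tilde\bH\to\SL_N$ (which has finite central kernel), at the adelic input $\tilde g_H$. The injectivity radius is controlled through Lemma~\ref{lem:ht-injr}, while $\height(\iota(\varphi(\tilde g_H)))$ is converted into $\hl(\pi(g))$ by combining Lemma~\ref{lem:height-g-g-H} with Lemma~\ref{lem:ht-htL}; the volume $\vol(Y_H)$ appearing in the theorem is then replaced by $\disc_\Bcal(\hat Y)$ via Proposition~\ref{prop:vol-comp} and Proposition~\ref{prop:vol-vol}(1). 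The output is a prime $p\ll(\log\disc_\Bcal(\hat Y))^2$ and $\gamma_0\in\tilde\bH(\Q)$ such that $\tilde g_H\gamma_0=h_1hh_2$ with $h_1,h_2\in\Omega_{\tilde\bH,\eta_0}$ and $|\iota(\varphi(h))|\ll\hl(\pi(g))^\star\disc_\Bcal(\hat Y)^\star$. This is precisely the computation carried out in the proof of Theorem~\ref{thm:eff-strong-app} and it delivers claims (3)--(5) for $h_H$.

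For $\gamma_1$, I would invoke Lemma~\ref{lem:unip-case} applied to $\gamma_0^{-1}\tilde g_R\gamma_0\in\bR(\A)$ with the embedding $\pi|_\bR:\bR\hookrightarrow\SL_d$, the set of places $\{\infty\}$, and $v=\infty$. The lemma produces $\gamma_1\in\bR(\Q)$ making $h_R$ integral at every finite prime (claim (1)) and bounding $|\pi(h_R)_\infty|\ll\height(\bR)^\star\ll\height(\bL)^\star$ (claim (2)). The three auxiliary assertions about $\gamma_0$ and $\gamma_1$ themselves then follow from the identities $(\tilde g_H)_q=e$ and $(\tilde g_R)_q=e$ at every $q\notin S$: for such $q$ one has $\iota(\varphi(\gamma_0))_q=\iota(\varphi(h_H))_q$ and $\pi(\gamma_1)_q=\pi(h_R)_q$, and the integrality statements are inherited from (3) and (1) respectively. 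If $p\notin S$, the same identities give $\iota(\varphi(\gamma_0\gamma_1))_p=\iota(\varphi(h_H))_p\cdot\Ad_\bL(\pi(h_R))_p$, whose norm is bounded by the product of the estimate in (5) and the trivial bound $|\Ad_\bL(\pi(h_R))_p|\ll_\Bcal 1$ coming from (1), yielding the advertised inequality after absorbing $\height(\bL)\leq\disc_\Bcal(\hat Y)$. No genuinely new obstacle appears: the only delicate point is the consistent translation between standard norms and $\Bcal$-norms on $\fsl_N$, and between $\vol(Y)$-type and $\vol(\hat Y)$-type invariants, both handled by Lemma~\ref{lem:ht-htL} and Proposition~\ref{prop:vol-vol}(1) at the cost of the usual polynomial factor in $\height(\bL)$.
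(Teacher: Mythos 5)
Your proposal is correct and follows essentially the same route as the paper: the paper's own justification of this theorem is exactly the proof of Theorem~\ref{thm:eff-strong-app}, which already produces $h_H=\tilde g_H\gamma_0$ via Theorem~\ref{sec:proof-semisimple} applied to $(\tilde\bH,\iota|_{\tilde\bH})$ and $h_R=\gamma_0^{-1}\tilde g_R\gamma_0\gamma_1$ via Lemma~\ref{lem:unip-case}, with the five estimates and the three auxiliary assertions read off exactly as you describe (including the deduction of the statements about $\gamma_0,\gamma_1$ from $(\tilde g)_q=e$ for $q\notin S$, and the absorption of the $\height(\bL)^\star$ factors coming from the $\Bcal$-norm comparisons).
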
 

\medskip

For any $g\in \tilde{\bf H}(\Q_S)$, we define  
\[
\height_H(\pi(g)):=\max\{\cfs(\Ad_H(\pi(g))w)^{-1}:0\neq w\in\mathfrak h(\Z_S)\},
\]
where $\mathfrak h=\Lie(H)\cap\mathfrak{sl}_d(\Z_S)$. 

It follows from the definition that $\height_H(\pi(g))\leq \height_L(\pi(g))$ for any $g\in \tilde{\bf H}(\Q_S)$.
Moreover, in view of Lemma~\ref{lem:ht-htL} and Lemma~\ref{lem:height-g-g-H} we have the following. Let $g\in\tilde{\bf L}(\Q_S)$ and write 
$g=g_Hg_R$, then 
\be\label{eq:htL:htH-gH}
\height_H(\pi(g_H))\leq \height_L(\pi(g_H))\ll \height({\bf L})^\star\height_L(\pi(g))^\star.
\ee

We also record the following lemma.

\begin{lemma}\label{cor:htLg:htH-gH-lower}
Let $g\in\tilde{\bf L}(\Q_S)$ and write 
$g=g_Hg_R$, then 
\[
\height_H(\pi(g_H))\gg \disc_{\Bcal}(\hat Y)^{-\star} \height_L(\pi(g_H))^\star\gg \disc_{\Bcal}(\hat Y)^{-\star}\height_L(\pi(g))^\star.
\]
\end{lemma}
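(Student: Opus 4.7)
The plan is to establish the two inequalities in the statement separately, reducing each to earlier results.

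For the second inequality $\hl(\pi(g_H))^\star \gg \disc_\Bcal(\hat Y)^{-\star}\hl(\pi(g))^\star$, I would reduce to Lemma~\ref{lem:height-g-g-H} applied to the datum $(\bG, \iota)$ from \S\ref{sec:adj-L}. With $\hat g = \iota(\varphi(\tilde g))$ as in \eqref{eq:hat-g}, the Levi component is ${\rm pr}_H(\hat g) = (\Ad_L(\pi(g_H)),(e)_{q\not\in S})$. Applying Lemma~\ref{lem:height-g-g-H} in the basis $\Bcal$ yields $\height_\Bcal({\rm pr}_H(\hat g)) \gg \height_\Bcal(\bG)^{-\star}\height_\Bcal(\hat g)^\star$, and using Lemma~\ref{lem:ht-htL} to pass between the $\Bcal$-heights and $\hl$, combined with $\height_\Bcal(\bG) \ll \height(\bL)^\star$ (from the structure-constant estimate \eqref{eq:str-cont}) and $\height(\bL) \leq \disc_\Bcal(\hat Y)$, recovers the claim.

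For the first inequality $\height_H(\pi(g_H)) \gg \disc_\Bcal(\hat Y)^{-\star}\hl(\pi(g_H))^\star$, set $h := \pi(g_H) \in H(\Q_S)$. Since $\mathfrak r$ is an ideal of $\mathfrak l$ and $H$ normalizes itself, $\Ad_L(h)$ preserves the direct sum $\mathfrak l = \mathfrak h \oplus \mathfrak r$, acting as $\Ad_H(h)$ on $\mathfrak h$ and via some algebraic representation $\rho: H \to \GL(\mathfrak r)$ on $\mathfrak r$. Taking $\Z$-bases $\mathcal B_\mathfrak h, \mathcal B_\mathfrak r$ of $\mathfrak h \cap \fsl_d(\Z)$, $\mathfrak r \cap \fsl_d(\Z)$ with elements of norm $\ll \height(\bL)^\star$ via Proposition~\ref{prop:LiesmallLevi}, $\Ad_L(h)$ is block-diagonal $\mathrm{diag}(\Ad_H(h), \rho(h))$ in this adapted basis, at the cost of a change-of-basis factor of $\height(\bL)^\star$ relative to the standard basis of $\fsl_d$.

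The heart of the matter, and what I expect to be the main obstacle, is the uniform estimate $|\rho(h)_v| \ll |\Ad_H(h)_v|^\star$ for each $v \in S$. To prove this I would use an Iwasawa decomposition $h_v = k_v a_v u_v$ in $H(\Q_v)$ (with $a_v$ in a maximal split torus of $H$): since both $\Ad_H$ and $\rho$ are algebraic representations of the semisimple group $H$ on spaces of dimension at most $N^2$, their torus weights are bounded in norm by a quantity depending only on $N$ (e.g.~via the Weyl dimension formula). Thus $|\rho(h)_v|$ and $|\Ad_H(h)_v|$ are both comparable to appropriate $\star$-powers of $|a_v|$, giving $|\rho(h)_v| \ll |\Ad_H(h)_v|^\star$ uniformly in $h$ and $\bL$. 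Multiplying over $v \in S$ yields $|\Ad_L(h)_v| \ll \disc_\Bcal(\hat Y)^\star |\Ad_H(h)_v|^\star$, and converting between operator norms and heights as in the proof of Lemma~\ref{lem:ht-injr} yields $\hl(h) \ll \disc_\Bcal(\hat Y)^\star \height_H(h)^\star$, which is equivalent to the claim.
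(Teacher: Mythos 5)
Your treatment of the second inequality (via Lemma~\ref{lem:ht-htL}, Lemma~\ref{lem:height-g-g-H}, and $\height(\bL)\leq\disc_\Bcal(\hat Y)$) is exactly the paper's argument. The problem is in the first inequality, at the very last step: ``converting between operator norms and heights'' in the direction you need is not possible. The functions $\hl$ and $\height_H$ are invariant under right multiplication by elements of $L\cap\SL_d(\Z_S)$ (they only depend on the lattices $\Ad(h)\mathfrak l(\Z_S)$, $\Ad_H(h)\mathfrak h(\Z_S)$), whereas operator norms are not. Concretely, if $h=\gamma$ is an element of $H(\Q)\cap\SL_d(\Z_S)$ with very large entries, then $\height_H(\gamma)\ll\height(\bL)^\star$ is essentially trivial while $\prod_{v\in S}|\Ad_H(\gamma)_v|$ is arbitrarily large; so the bound $\prod_v|\Ad_H(h)_v|^\star\ll\disc_\Bcal(\hat Y)^\star\height_H(h)^\star$, which your chain $\hl(h)\leq\prod_v|\Ad_L(h)_v|\ll\disc_\Bcal(\hat Y)^\star\prod_v|\Ad_H(h)_v|^\star$ would require to conclude, is false. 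The conversion in Lemma~\ref{lem:ht-injr} only works because one first replaces $g$ by a \emph{reduced} representative $g\gamma$ in a Siegel domain, for which operator norm and height become comparable. (There are also secondary uniformity issues in your weight argument: the Iwasawa decomposition $h_v=k_va_vu_v$ has an unbounded unipotent part, so neither $|\rho(h_v)|$ nor $|\Ad_H(h_v)|$ is a power of $|a_v|$ --- you would need the Cartan decomposition --- and the implied constants depend on the maximal compact of $H(\Q_v)$, hence on the embedding of $\bH$ in $\SL_d$, not only on $N$.)

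The paper closes exactly this gap by invoking the effective reduction theory it has already established: it applies Theorem~\ref{thm:eff-strong-app-revisited} with $\bL=\bH$ to $g_H$ and Lemma~\ref{lem:unip-case} to the radical coordinate, producing $\gamma_0\in\tilde\bH(\Q)$, $\gamma_1\in\bR(\Q)$ so that $(g,(e))\gamma_0\gamma_1=h_Hh_R$ with $|\Ad(h_H)_v|$ and $|\pi(h_R)_v|$ explicitly bounded by $\height_H(\pi(g_H))^\star\disc_\Bcal(\hat Y)^\star$ at every place (and integral away from $S\cup\{p\}$). One then writes $\hl(g)=\max_w\cfs(\Ad_L(\pi((h_H)_S(h_R)_S)\gamma_1^{-1}\gamma_0^{-1})w)^{-1}$, peels off the factors $h_H,h_R$ using their operator-norm bounds, and controls the remaining rational factor $\gamma_1^{-1}\gamma_0^{-1}$ by the product formula together with its integrality away from $S\cup\{p\}$. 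Your observation that the weights of $\Ad_L|_{\mathfrak r}$ are controlled by those of $\Ad_H$ is morally why the statement is true, but to turn it into a proof you must first reduce $g_H$ modulo $\tilde\bH(\Q)$, which brings you back to the paper's argument.
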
 

\begin{proof}
The second estimate follows from Lemma~\ref{lem:ht-htL}, Lemma~\ref{lem:height-g-g-H}, and the fact that
$\disc_{\Bcal}(\hat Y)\geq \height({\bf L})$. Thus we only need to show 
\[
\height_H(\pi(g_H))\gg \disc_{\Bcal}(\hat Y)^{-\star} \height_L(\pi(g_H))^\star.
\]

The proof uses arguments similar to the ones used in the proof of Theorem~\ref{thm:eff-strong-app};
apply Theorem~\ref{thm:eff-strong-app-revisited} with ${\bf L}={\bf H}$ to $g_H$. 
There exist some $p\ll \bigl(\log\disc_{\Bcal}(\hat Y)\bigr)^2$ and $\gamma_0\in\tilde{\bf H}(\Q)$ so that 
\begin{itemize}
\item[(i)] $\Ad_H(\gamma_0)_q\in\SL_N(\Z_q)$ for all $q\not\in S\cup\{p\}$,
\item[(ii)] if $p\not\in S$, then $|\Ad_H(\gamma_0)_p|\ll \height_H(\pi(g_H))^{\ref{k:local-exp}}\disc_{\Bcal}(\hat Y)^{\ref{k:local-exp}}$, 
\end{itemize}
and if we put $h_H=(g_H,(e)_{q \not\in S})\gamma_0$, we have the following estimates.
\begin{enumerate}
\item $\Ad(h_H)_q\in\SL_N(\Z_q)$ for all $q\not\in\{\infty, p\}$, 
\item $|\Ad(h_H)_\infty|\ll \height(\lcagr)^{\ref{k:local-exp-z}}$, and
\item $|\Ad(h_H)_p|\ll \height_H(\pi(g_H))^{\ref{k:local-exp-z}}\disc_{\Bcal}(\hat Y)^{\ref{k:local-exp}}$.
\end{enumerate}

Apply Lemma~\ref{lem:unip-case} with the set of places $\{\infty\}$ and $v=\infty$ to the element $\gamma_0^{-1}\tilde g_R\gamma_0$ to obtain some $\gamma_1\in\bR(\Q)$ such that 
\begin{itemize}
\item[(a)] $\pi(\gamma_0^{-1}\tilde g_R\gamma_0\gamma_1)\in \SL_d(\Z_q)$ for all primes $q$, and
\item[(b)] $|\pi((\gamma_0^{-1}\tilde g_R\gamma_0\gamma_1)_\infty)|\ll \height(\bR)^\star\ll \height(\lcagr)^\star$. 
%\item[(c)] $|\pi((\gamma_0^{-1}\tilde g_R\gamma_0\gamma_1)_p)|\ll \height(\lcagr)^\star p^\star\height(\bR)^\star\ll p^\star\height(\lcagr)^\star$.
\end{itemize}
Since $\pi(\gamma_0^{-1}\tilde g_R\gamma_0)_q=e$ for all $q\not\in S$, 
item (a) above implies that $\pi(\gamma_1)_q\in\SL_d(\Z_q)$ for all $q\not\in S$.

Let us put $h_R=\gamma_0^{-1}\tilde g_R\gamma_0\gamma_1$, so that we have 
\[
(g, (e))\gamma_0\gamma_1=(g_Hg_R,(e))\gamma_0\gamma_1=((h_H)_S,(h_H)_{q \not\in S})((h_R)_S,(h_R)_{q \not\in S}). 
\]
By abuse, we denote the projection of $\gamma_0, \gamma_1$ onto the $S$-coordinates again by $\gamma_0, \gamma_1\in \tilde{\bf L}(\Q_S)$.

We have
\begin{align}
\notag\hl(g)&=\max\{\cfs(\Ad_L(\pi(g))w)^{-1}:0\neq w\in\mathfrak l(\Z_S)\}\\
\label{eq:hlg-hH-hR}&=\max\{\cfs(\Ad_L(\pi((h_H)_S(h_R)_S)\gamma_1^{-1}\gamma_0^{-1})w)^{-1}:0\neq w\in\mathfrak l(\Z_S)\}.
\end{align}
First, we note that using (1)--(3), (a) and (b) we have 
\begin{equation}\label{eq:hH-hR-bdd}
\cfs(\Ad_L(\pi((h_H)_S(h_R)_S)\gamma_1^{-1}\gamma_0^{-1})w)^{-1}\ll
\height_H(\pi(g_H))^{\star}\disc_{\Bcal}(\hat Y)^{\star} \cfs(\Ad_L(\gamma_1^{-1}\gamma_0^{-1})w)^{-1}.
\end{equation}
Furthermore, using (i), (ii), and the fact that $\pi(\gamma_1)_q\in\SL_d(\Z_q)$ for all $q\not\in S$, we have 
\[
\cfs(\Ad_L(\gamma_1^{-1}\gamma_0^{-1})w)^{-1}\ll \height_H(\pi(g_H))^{\star}\disc_{\Bcal}(\hat Y)^{\star}.
\]
This, in view of~\eqref{eq:hH-hR-bdd} and~\eqref{eq:hlg-hH-hR}, implies that
\[
\notag\hl(g)\ll \height_H(\pi(g_H))^{\star}\disc_{\Bcal}(\hat Y)^{\star};
\]
the proof is complete.
\end{proof}

\subsection{Uniform lattices}\label{sec:Mahler-Meas}
In this section, we discuss the dependence of the above estimates on $\hl(g)$ under the assumption that 
the Levi component, ${\bf H}$, of $\bf L$ is $\Q$-anisotropic.  We begin with the following lemma which is of independent interest --- one could obtain similar estimates using known results towards the Lehmer conjecture, but we provide a homemade argument. 

\begin{lem}
There exists some $0<\beta<1$ depending on $\dim {\bf L}$ with the following property.
Let $w\in\mathfrak l(\Z_S)$ and assume that there exists some $g\in L$ so that $\cfun_S(\Ad_L(g) w)\leq \beta$. 
Then $w$ is a nilpotent element.
\end{lem}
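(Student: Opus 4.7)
The plan is to detect the non-nilpotence of $w$ via the $\Ad_L$-invariant coefficients of the characteristic polynomial of $w$ viewed as an element of $\fsl_d$. Since $\bL\subset\SL_d$ the adjoint action of $L$ on $\fl$ is just conjugation, so writing $\chi_w(t)=\det(tI-w)=t^d+\sum_{i=0}^{d-1}c_i(w)\,t^i$, each $c_i$ is a homogeneous polynomial of degree $d-i$ in the entries of $w$ with integer coefficients, and is $\Ad_L$-invariant. Since $w$ is nilpotent (as a matrix, hence as an element of $\fl$) if and only if $c_i(w)=0$ for every $i$, it will suffice to show that $\cfs(\Ad_L(g)w)\leq\beta$ forces all the $c_i(w)$ to vanish.

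The core step is the application of the product formula to the rational numbers $c_i(w)\in\Z_S$. At every place $v$ one has the elementary bound $|c_i(w)|_v\leq C_{d,v}\|w\|_v^{d-i}$, where $C_{d,v}=1$ when $v$ is non-archimedean (by the ultrametric inequality) and $C_{d,v}$ depends only on $d$ when $v$ is archimedean. Using the invariance of $c_i$ under $\Ad_L$, this gives
\begin{equation*}
\prod_{v\in S}|c_i(w)|_v=\prod_{v\in S}|c_i(\Ad_L(g)w)|_v\leq C_d\cdot\cfs(\Ad_L(g)w)^{d-i}
\end{equation*}
for some $C_d\geq 1$ depending only on $d$. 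Conversely, because $w\in\fl(\Z_S)\subset\fsl_d(\Z_S)$, one has $|c_i(w)|_q\leq 1$ for every $q\notin S$; hence whenever $c_i(w)\neq 0$ the product formula yields $\prod_{v\in S}|c_i(w)|_v\geq 1$, and combining with the previous display gives $\cfs(\Ad_L(g)w)\geq C_d^{-1/(d-i)}$.

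Setting $\beta:=C_d^{-1/d}$ (which is strictly less than $1$ as soon as $C_d>1$, a condition trivially arranged by enlarging $C_d$ if needed), the hypothesis $\cfs(\Ad_L(g)w)\leq\beta$ then forces $c_i(w)=0$ for every $i$, and hence $w$ is nilpotent. There is no real obstacle here: the only bookkeeping is the standard archimedean bound on the coefficients of the characteristic polynomial of a $d\times d$ matrix, which is elementary.
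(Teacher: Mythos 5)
Your proof is correct, and it takes a somewhat different, more self-contained route than the paper's. The paper works with the single invariant $\bar\sigma(w)$, the product of the \emph{nonzero} eigenvalues of $w$: it argues that $\bar\sigma(w)\in\Q$ by Galois invariance, applies the product formula to it, and then, to convert the hypothesis $\cfun_S(\Ad_L(g)w)\leq\beta$ into a bound on the eigenvalues place by place, invokes a unit-balancing lemma (\cite[Lemma 8.6]{KT:Nondiv}) producing $r\in\Z_S^\times$ with $\|r\Ad_L(g)w\|_v$ of comparable size at all $v\in S$. You instead use all the coefficients $c_i$ of the characteristic polynomial of $w$ as a matrix in $\fsl_d$: since each $c_i$ is a conjugation-invariant polynomial with integer coefficients, homogeneous of degree $d-i$, the bound $\prod_{v\in S}|c_i(w)|_v\leq C_d\,\cfun_S(\Ad_L(g)w)^{d-i}$ follows directly from homogeneity, with no need to balance norms across places, and the product formula applied to $c_i(w)\in\Z_S$ finishes the job. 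This removes both the external citation and the Galois-invariance step, at the cost of the (elementary) archimedean bound on the $c_i$. One cosmetic point: with $\beta:=C_d^{-1/d}$ exactly, the case $c_0(w)\neq0$ only yields $\cfun_S(\Ad_L(g)w)\geq\beta$, which is compatible with the hypothesis $\cfun_S(\Ad_L(g)w)\leq\beta$; you should take $\beta$ strictly smaller than $\min_i C_d^{-1/(d-i)}=C_d^{-1/d}$ (equivalently, strictly enlarge $C_d$, as you already suggest doing to ensure $\beta<1$). This is immaterial to the argument.
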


\begin{proof}
Let $\bar\sigma(w)$ be the product of all the {\em nonzero} eigenvalues of $w$;
if this product is empty, i.e.~if $w$ is nilpotent, put $\bar\sigma(w)=0$.
Note that $\bar\sigma(w)\in\bbq$ because $\bar\sigma(w)$ is invariant under the Galois group of the splitting field of $w$. 
Further, since $w\in\mathfrak l(\Z_S)$, the product formula implies that
either $\cfun_S(\bar\sigma(w))\geq 1$ or $\bar\sigma(w)=0$. (Here, we also use $\cfun_S$ to denote the function $\Q_S \to \R^+: r \mapsto \prod_{v \in S} |r|_v$.) %--- recall that we are using our fixed norm on $\fsl_d$ to compute $\cfun_S$.

Let $\beta>0$ and assume that $\cfun_S(\Ad_L(g)w)\leq \beta$ for some $g\in L$. 
There exist some $r\in\Z_S^\times$
so that $\|r\Ad_L(g)w\|_v\asymp \cfun_S(\Ad_L(g)w)^\star$ for all $v\in S$, see for example \cite[Lemma 8.6]{KT:Nondiv}.
Therefore, all the eigenvalues of $r\Ad_L(g)w$ have $v$-norm $\ll \beta^\star$ for all $v\in S$.

Since $\cfun_S(r)=1$ and $\Ad_L(g)w$ has the same eigenvalues as $w$, we deduce that $\cfun_S(\bar\sigma(w))\geq 1$ cannot hold when $\beta$ is small enough; thus, $w$ is nilpotent.   
\end{proof}

\begin{prop}
Let the notation be as above; in particular, recall the Levi decomposition $\tilde{\bf L}=\tilde{\bf H}{\bf R}$ fixed in \S\ref{sec:local-notation}. 
Assume that $\tilde{\bf H}$ is $\Q$-anisotropic. Let $g\in \tilde{\bf L}(\Q_S)$, then
\[
\height_L(\pi(g))\ll \disc_{\Bcal}(\hat Y)^\star.
\]
Moreover, if $\tilde{\bf L}$ is semisimple, i.e.\ $\tilde{\bf L}=\tilde{\bf H}$, and we assume that $\tilde{\bf L}$ is $\Q$-anisotropic,  then 
$
\height_L(\pi(g))\ll 1.
$
\end{prop}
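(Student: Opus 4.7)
The plan is to prove the second (cleaner) assertion first, then bootstrap it to the first assertion via Lemma~\ref{cor:htLg:htH-gH-lower}.

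For the second assertion, I would assume $\tilde{\bf L}=\tilde{\bf H}$ is semisimple and $\Q$-anisotropic, and exploit the fact that the preceding lemma forces every $w\in\fl(\Z_S)$ whose $\Ad_L$-orbit comes within $\beta$ of the origin to be nilpotent. The classical characterization of $\Q$-anisotropy for semisimple groups says $\tilde{\bf L}(\Q)$ contains no nontrivial unipotent element; exponentiating a hypothetical nonzero nilpotent $X\in\mathfrak l(\Q)$ would contradict this (the exponential of a nilpotent being given by a finite polynomial and hence defined over $\Q$), so $\mathfrak l(\Q)$, and therefore $\fl(\Z_S)\subset\mathfrak l(\Q)$, contains no nonzero nilpotent element. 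Applying the preceding lemma to every nonzero $w\in\fl(\Z_S)$ thus yields $\cfs(\Ad_L(g)w)>\beta$ for all $g\in L$, from which $\hl(\pi(g))\leq\beta^{-1}\ll 1$ follows at once.

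For the first assertion, given $g\in\tilde{\bf L}(\Q_S)$, I would write $g=g_Hg_R$ with $g_H\in\tilde{\bf H}(\Q_S)$ and $g_R\in{\bf R}(\Q_S)$. Since $\tilde{\bf H}$ is itself simply connected, semisimple, and $\Q$-anisotropic by hypothesis, applying the just-established second assertion with $\tilde{\bf H}$ playing the role of $\tilde{\bf L}$ gives $\height_H(\pi(g_H))\ll 1$. Feeding this into Lemma~\ref{cor:htLg:htH-gH-lower}, which supplies
\[
\height_H(\pi(g_H))\gg \disc_{\Bcal}(\hat Y)^{-\star}\height_L(\pi(g))^\star,
\]
immediately produces the desired bound $\height_L(\pi(g))\ll\disc_{\Bcal}(\hat Y)^\star$.

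The substantive work has already been invested upstream, in the Lehmer-type lower bound encoded in the preceding lemma and in the comparison between $\height_L$ and $\height_H$ established in Lemma~\ref{cor:htLg:htH-gH-lower}. The only real point of vigilance is the uniformity of the implicit constants: the $\beta$ furnished by the preceding lemma depends only on $\dim{\bf L}$, which in the conventions of \S\ref{sec:adj-L} equals $N$, so the bound $\hl\ll 1$ is uniform in the paper's sense; and when the second assertion is recycled on $\tilde{\bf H}$ in the bootstrap step, the relevant dimension $\dim\tilde{\bf H}\leq N$ again causes no difficulty. No spectral gap or Prasad--Borel input is required, which is consistent with the fact that cocompactness should eliminate dependence on the injectivity radius.
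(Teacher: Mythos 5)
Your proposal is correct and follows essentially the same route as the paper: the anisotropy of $\tilde{\bf H}$ plus the Lehmer-type lemma (small $\cfs$-value forces nilpotence, whose exponential would produce a nontrivial unipotent in $\bH(\Q)$, contradicting anisotropy) yields $\height_H(\pi(g_H))\leq\beta^{-1}\ll 1$, and the general case then follows by feeding this into Lemma~\ref{cor:htLg:htH-gH-lower} exactly as you describe. The only cosmetic difference is the order of presentation (you prove the semisimple case first and then bootstrap, while the paper bounds $\height_H(\pi(g_H))$ directly and notes the semisimple case as a byproduct), which changes nothing of substance.
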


\begin{proof}
Let us write $g=g_Hg_R$ where $g_H\in\tilde{\bf H}(\Q_S)$ and $g_R\in{\bf R}(\Q_S)$.
Let $\beta$ be as in the previous lemma applied with $\bH$ instead of $\bL$. 
We claim that $\height_H(\pi(g_H))\leq \beta^{-1}$. Indeed, if $\height_H(\pi(g_H))> \beta^{-1}$, then by definition there exists a nonzero $w \in \fh(\Z_S)$, such that $\cfun_S(\Ad_H(\pi(g_H)) w) < \beta$. The lemma then implies that $w$ is a nilpotent element. 
Exponentiating $w$, we get that $\bf H$ (and hence $\tilde{\bf H}$) is $\Q$-isotropic, which is a contradiction.  
This implies the proposition when $\tilde{\bf L}=\tilde{\bf H}$.

Now, for the general case, we apply Lemma~\ref{cor:htLg:htH-gH-lower} and the bound we obtained above to obtain
\[
\height_L(\pi(g))\ll \height_H(\pi(g_H))^{\star}\disc_{\Bcal}(\hat Y)^\star\ll\disc_{\Bcal}(\hat Y)^\star,
\]
as was claimed.
\end{proof}

It is worth mentioning that the proof of the previous proposition when $\tilde{\bf L}$ is semisimple is independent of 
Lemma~\ref{cor:htLg:htH-gH-lower} and relies only on the lemma proved in this section.

\bibliographystyle{plain}

\begin{thebibliography}{1}


\bibitem{Belo}
M.~Belolipetsky. 
\newblock Counting maximal arithmetic subgroups. 
\newblock {\em Duke Math.\ J.}, 140(1):1--33, 2007, (with an appendix by J.\ Ellenberg and A.\ Venkatesh). 


\bibitem{BombieriVaaler83}
E.~Bombieri, J.~Vaaler.
\newblock On {S}iegel's lemma.
\newblock {\em Invent.~Math.}, 73(1):11--32, 1983.


\bibitem{BorPr}
A.~Borel, G.~Prasad.
\newblock Finiteness theorems for discrete subgroups of bounded covolume 
in semisimple groups.
\newblock {\em Pub.\ Math.\ IHES.},  69(1):119--171, 1989.


\bibitem{BT}
A.~Borel, J.~Tits.
\newblock { \'{E}l\'{e}ments unipotents et sous-groupes paraboliques de groupes r\'{e}ductifs, I.}
\newblock {\em Invent.\ Math.,}  12:95--104, 1971. 


\bibitem{BurSc}
M.~Burger, V.~Schroeder.
\newblock Volume, diameter and the first eigenvalue of locally symmetric spaces of rank one.
\newblock {\em J.\ Diff.\ Goem.}, 26:273--284, 1987.


\bibitem{BS}
M.~Burger, P.~Sarnak.
\newblock Ramanujan duals. {II}.
\newblock {\em Invent.\ Math.}, 106(1):1--11, 1991.


\bibitem{LC}
L.~Clozel.
\newblock D\'emonstration de la conjecture {$\tau$}.
\newblock {\em Invent.\ Math.}, 151(2):297--328, 2003.


\bibitem{CHH}
M.~Cowling, U.~Haagerup, R.~Howe.
\newblock Almost {$L\sp 2$} matrix coefficients.
\newblock {\em J. Reine Angew.\ Math.}, 387:97--110, 1988.


\bibitem{DM}
S.~Dani, G.~Margulis.
\newblock Limit distributions of orbits of unipotent flows and values of quadratic forms.  
\newblock {\em Advances in Soviet Mathematics} 16:91--137, 1993.


\bibitem{ELMV}
M.~Einsiedler, E.~Lindenstrauss, P.~Michel, A.~Venkatesh.
\newblock The distribution of periodic torus orbits on homogeneous spaces.
\newblock {\em Duke Math.\ J.}, 148(1):119--174, 2009.


\bibitem{EMMV}
M.~Einsiedler, G.~Margulis, A.~Mohammadi, A.~Venkatesh.
\newblock Effective equidistribution and property ($\tau$).
\newblock {arXiv:1503.05884}


\bibitem{EMV}
M.~Einsiedler, G.~Margulis, A.~Venkatesh.
\newblock Effective results for closed orbits of semisimple groups on homogeneous spaces.
\newblock {\em Invent.\ Math. }, 177(1):137--212, 2009.


%\bibitem{EV}
%J.~Ellenberg, A.~Venkatesh.
%\newblock Local-global principles for representations of quadratic forms.
%\newblock {\em Invent.\ Math}, 171(2): 257--279, 2008.


\bibitem{GMO}
A. Gorodnik, F. Maucourant, and H. Oh.
\newblock Manin's and Peyre's conjectures on rational points and Adelic mixing. 
\newblock {\em Ann. Sci. Ecole Norm. Sup.}, 41:47--97, 2008.


\bibitem{Grom}
M.~Gromov. 
\newblock Manifolds of negative curvature. 
\newblock {\em J.\ Diff.\ Geom.}, 13:223--230, 1978.


\bibitem{JL}
H.~Jacquet, R.~Langlands. 
\newblock Automorphic Forms on GL(2). 
\newblock Lecture Notes in Math. 114, Springer-Verlag 1970.


\bibitem{Kazh}
D. Kazhdan. 
\newblock Connection of the dual space of a group with the structure of its closed subgroups. 
\newblock {\em Funct.\ Anal.\ and its Appl.}, 1(1):71--74, 1967.


\bibitem{KT:Nondiv}
D.~Kleinbock, G.~Tomanov. 
\newblock Flows on S-arithmetic homogenous spaces and application to metric Diophantine approximation. 
\newblock {\em Comment.\ Math.\ Helv.}, 82(3):519--581, 2007.


\bibitem{Ko}
R.~Kottwitz,
\newblock Tamagawa numbers.
\newblock {\em Ann.\ Math.}, 127:629--646, 1988.


\bibitem{OLearyVaaler93}
R.~O'Leary, J.~Vaaler.
\newblock Small solutions to inhomogeneous linear equations over number fields.
\newblock {\em Trans.\ Amer.\ Math.\ Soc.}, 336(2):915--931, 1993.


\bibitem{LiMarg}
H.~Li, G.~Margulis.
\newblock Effective estimates on integral quadratic forms: Masser's conjecture, generators of orthogonal groups, and bounds in reduction theory.
\newblock {\em Geom.\ and Fun.\ Anal.}, 26(3):874--908, 2016.


\bibitem{Mostow}
G.~Mostow.
\newblock Fully Reducible Subgroups of Algebraic Groups.
\newblock {\em American J.\ of Math.}, 78(1):200--221, 1956.


\bibitem{O}
H.~Oh.
\newblock Uniform pointwise bounds for matrix coefficients of unitary representations and applications to Kazhdan constants.
\newblock {\em Duke Math.\ J.}, 113:133--192, 2002. 


\bibitem{PR}
V.~Platonov, A.~Rapinchuk.
\newblock Algebraic Groups and Number Theory.
\newblock Pure and App.\ Math., 139, Academic press, 1994.


\bibitem{Pr}
G.~Prasad.
\newblock Volumes of S-arithmetic quotients of semi-simple groups. 
\newblock {\em Publ.\ Math.\ IHES.}, 69:91--117, 1989.


\bibitem{Pr-JKY}
G.~Prasad, J-K.~Yu.
\newblock On quasi reductive group schemes.
\newblock{\em J.\ Aleg.\ Geom.}, 15:507-549, 2006.


%\bibitem{Raghunathan}
%M.~Raghunathan,
%\newblock Discrete subgroups of semisimple Lie groups.
%\newblock Ergebnisse der Mathematik und ihrer Grenzgebiete, Springer-Verlag, New York-Heidelberg, 1972. 


%\bibitem{Rat}
%M.~Ratner.
%\newblock On Raghunathan's measure conjecture.
%\newblock {\em Ann.\ of Math.}, 134:545--607, 1991.


%\bibitem{Rat-S-alg}
%M.~Ratner. 
%\newblock On the p-adic and S-arithmetic generalizations of Raghunathan's conjectures.
%\newblock {\em Lie groups and ergodic theory} (Mumbai, 1996), 
% 167--202, Tata Inst.\ Fund.\ Res.\ Stud.\ Math., 14, Tata Inst. Fund. Res., Bombay,  1998.


\bibitem{Sel}
A.~Selberg. 
\newblock On the estimation of Fourier coefficients of modular forms. 
\newblock {\em Proc.\ Sympos.\ Pure Math., Vol. VIII,}1--15, Amer. Math. Soc., Providence, R.I. 1965.


%\bibitem{Siegel29}
%Siegel, C.~L.
%\newblock ber eine 
%\newblock Abh.der Preu.~Akad.~der Wissenschaften. Phys.-math.~K1.~1929, Nr.~1 (=Ges.~Abh., I, 209-266)


%\bibitem{Serre-LG}
% J.~Serre.
% \newblock Lie algebras and Lie groups. 
% \newblock New York, W.\ A.\ Benjamin, 1965.


% \bibitem{Steinberg}
% R.~Steinberg.
% \newblock Representation of algebraic groups.
% \newblock {\em Nogaya Mat.\ J.}, 22:33--56, 1963.


%\bibitem{SaVa}
%A.~Salehi Golsefidy, P.~Varju.
%\newblock Expansion in perfect groups, 
%\newblock {\em Geom. and Func. Analysis},  22(6):1832--1891, 2012.


\bibitem{Ti}
J.~Tits.
\newblock Reductive groups over local fields.
\newblock {\em Proc.of Symp.\ in Pure Math.}, 33(1):29--69, 1979.  


\bibitem{Weil82}
{Weil, Andr\'e}.
\newblock {Adeles and algebraic groups},
\newblock {Progress in Mathematics}, {23}, {Birkh\"auser, Boston, Mass.}, {1982}.

\end{thebibliography}

\end{document}